\theoremstyle{plain}
\newtheorem{theorem}{Theorem}[section]
\newtheorem{lemma}[theorem]{Lemma}
\theoremstyle{definition}
\newtheorem{definition}[theorem]{Definition}
\theoremstyle{remark}
\begin{document}
	
	\title[]{Equality of DSER elementary orthogonal group and Eichler-Siegel-Dickson transvection group}
	\author{Gayathry Pradeep\textsuperscript{1}$^{\ast}$}\thanks{$^{\ast}$Corresponding author}
	\author{Ambily Ambattu Asokan\textsuperscript{2}}
	\author{Aparna Pradeep Vadakke Kovilakam\textsuperscript{3}}
	\address{\newline\textsuperscript{1,2}Department of Mathematics
		\newline Cochin University of Science and Technology
		\\
		Cochin University P.O.
		\newline Cochin 682022, Kerala
		\newline India
		\newline \textsuperscript{3} Kerala School of Mathematics
		\newline Kozhikode 673571, Kerala
		\newline India 
		\vspace{2mm}
		\newline {\textsuperscript{1}\it{Email: }\tt ambily@cusat.ac.in}
		\newline {\textsuperscript{2}\it{Email: }\tt aparnapradeepvk@gmail.com}
		\newline {\textsuperscript{3}\it{Email: }\tt gayathrypradeep221b@gmail.com}}

	\begin{abstract}
		We prove the Dickson-Siegel-Eichler-Roy (DSER) elementary orthogonal group, which was introduced by Amit Roy in 1968 and the Eichler-Siegel-Dickson transvection group, which is in literature in the works of Dickson, Siegel and Eichler, are equal over a commutative ring in which $2$ is invertible. We prove the equality in the free case by considering the odd and even case separately and then generalize this result by using the local-global principle. This result generalizes previous results concerning the equality of elementary orthogonal transvection groups.
		
	\end{abstract}

	\keywords{Quadratic module, orthogonal groups, ESD transvections, DSER transformations}
	
	\vspace{1mm}
	
	\subjclass[2020]{19G99, 13C10, 11E70, 20H25}

	\date{\today}

	\maketitle
\begin{flushright}
	{\it Dedicated to Nikolai Vavilov}
	\end{flushright}
		
	\section{Introduction}
	The study of linear groups became prominent in mid $60$'s through the seminal papers of H. Bass and Bass-Milnor-Serre \cite{Bass1964, BMS}. Compared with linear and symplectic groups, the orthogonal groups do not have transvections when the characteristic of the field is not $2$. Instead, they have particular types of transformations in the role of transvections which are known as {\it Eichler-Siegel-Dickson transvections (or ESD Transvections)}. The ESD transvection over a vector space is defined as follows: Let $u$ be an isotropic vector and $v$ be a vector orthogonal to $u$ with $r=q(v)$, then the transformation $\sigma_{u,v}$ is defined by 
	\begin{equation*}
		\sigma_{u,v}(x) = x + u \langle v,x \rangle-v \langle u,x \rangle-ur \langle u,x \rangle.
	\end{equation*}
	
	\vspace{2mm}
	The first documentation of Eichler-Siegel-Dickson (ESD) transformations appeared in the work of L.E. Dickson in matrix form over finite fields in the 1901 edition of `Linear groups: With an exposition of the Galois field theory', which is republished as \cite{Dickson1958}. It reappeared on works of C.L. Siegel on indefinite quadratic forms  in the papers `\emph{\"Uber die analytische Theorie der quadratischen Formen II.}' and `\emph{\"Uber die {Z}etafunktionen indefiniter quadratischer {F}ormen. {II}}'. In \cite{Siegel1939}, Siegel used these transformations to define `\emph{Das Darstellungsmaß}' which means the measure of representation of $0$, by an indefinite quadratic form, which is equivalent to the representation number of $0$. In \cite{Eichler1952}, M. Eichler developed the theory by defining these transformations on vector spaces over general fields. In \cite{Wall1964}, C.T.C. Wall extended the definition of Eichler transformations to unimodular lattices. In \cite{Roy1968}, Amit Roy generalized these transformations to a non-degenerate quadratic space with a hyperbolic summand over a commutative ring. These transformations are called the {\it Dickson-Siegel-Eichler-Roy (DSER) elementary orthogonal transformations} and the group generated by these transformations is called {\it DSER elementary orthogonal group}.
	
	\vspace{2mm}
	
	For a non-degenerate quadratic space $Q$ and a finitely generated projective module $P$, H. Bass introduced the notion of the transvection group on $Q\!\perp\!\mathbb{H}(P)$ for studying the stability theorem for projective modules in \cite{Bass1973}. He formulated a case in which an ESD transvection can be expressed in terms of the maps from $Q \rightarrow P$ and $P^* \rightarrow Q$ (see \cite[Proposition 5.4]{Bass1973}). The generators of DSER elementary orthogonal group are of this type. The DSER elementary orthogonal transvection on $Q\!\perp\! \mathbb{H}(R)$ can be viewed as an `{\it orthogonal lift}' of the usual elementary transvections on $Q\oplus R$ (for more details see \cite[Lemma~1]{Tamagawa1958}). In this article we prove that any transvection generated by a unimodular vector $u$ and a vector $v$, which is orthogonal to $u$ can be expressed as product of the DSER elementary orthogonal transformations. The linear, symplectic, and orthogonal transvection groups generalize the classical linear, symplectic, and orthogonal groups respectively, as both groups coincide over free modules \cite{BakBasuRao2010}. Analogous to classical groups, A. Bak, R. Basu and R. A. Rao, H. Apte, P. Chattopadhyay proved several vital results like normality theorem, the local-global principle etc for the elementary subgroups of transvection groups and general quadratic groups in \cite{BakBasuRao2010, PratyushaApteRao2012, Basu2018}. In \cite{Pratyusha2019}, P. Chattopadhyay proved the equality of the isotropic transvection group and the elementary orthogonal transvection group. The study of odd hyperbolic unitary group over a form ring were done by V. A. Petrov and R. Preusser in \cite{Petrov2005, Preusser2020}. In \cite{AparnaAmbily2024}, A. A. Ambily and V. K. Aparna Pradeep compared the DSER elementary orthogonal group and the odd elementary hyperbolic unitary group and proved that they are conjugate subgroups of the general linear group.
	
	\vspace{2mm}
	
	Simultaneously, A. Bak generalized the ESD transvections to a non-degenerate quadratic space over a form ring $(R,\Lambda)$. Chapters 5,9 in \cite{HahnOMeara1989} discuss a detailed study of these transformations. Though these two transformation groups routed to a single transformation group on a vector space over a field, the comparison of these two groups appears to be complex. It's evident that the DSER elementary orthogonal group is a subset of the ESD transformation group, but showing the reverse was difficult. In this article, we compare these two groups in the free case with the elementary orthogonal group and obtain that these three groups coincide if we choose the form on quadratic space accordingly. In general, we also compare the relative cases in each group and prove the equality of the corresponding relative groups.
	
	\vspace{2mm}
	
	Moreover, automorphism groups play a significant role in number theory. Several cryptosystems are based on automorphism groups over commutative rings. The ESD transvection group generalizes the Chevalley group corresponding to root systems $B_l$ and $D_l$. Therefore, it is interesting to study the possible algorithoms based on this orthogonal subgroups as well.


	\section{Preliminaries}
	Let $R$ denote a commutative ring with unity in which $2$ is invertible.
	\vspace{1mm}
	
	For a natural number $n \in \mathbb N$, let ${\rm GL}_{n}(R)$ denote the group of all $n \times n$ invertible matrices with entries from $R$. Let $\varphi$ be an $n\times n$ invertible symmetric matrix. The {\it orthogonal group} ${\rm O}_{n}(R)$, with respect to the matrix $\varphi$ is defined to be 
	$${\rm O}_n(R)=\{\alpha\in {\rm GL}_{n}(R):\alpha^t\varphi \alpha=\varphi\}.$$
	For $r \in \mathbb N$, let $e_{i,j}$ denote the $2r \times 2r$ matrix with $(i,j)^{\mbox{th}}$ entry $1$ and all other entries $0$ and let $\widetilde{\psi}_r$ denote the symmetric matrix $\widetilde{\psi}_r:=\sum_{i=1}^{r}e_{2i-1,2i}+\sum_{i=1}^{r}e_{2i,2i-1}$. In this article, we consider the standard symmetric matrix $\tilde{\phi}_n$, defined by 
	$$\tilde{\phi}_n =\begin{cases}
		\widetilde{\psi}_r \mbox{ when } n=2r, \\
		2 \perp \widetilde{\psi}_r \mbox{ when } n=2r+1.
	\end{cases}$$
	
	Also we study the elementary subgroup of the orthogonal group. Odd elementary orthogonal group and even elementary orthogonal group are defined using different type of generators.
	
	\vspace{1mm}
	
	Let $\sigma$ denote the permutation of $n$ numbers given by $ \sigma(2i)=2i-1$ and $\sigma(2i-1)=2i$. Then for $1 \leq i \neq j \leq n$ and $z\in R$, the matrices $oe_{i,j}(z)$ defined by
	$$oe_{i,j}(z)=I_{2n}+e_{i,j}(z)-e_{\sigma(j),\sigma(i)}(z),$$ are orthogonal. The subgroup of ${\rm O}_{2r}(R)$ generated by the matrices of the form $oe_{i,j}(z)$ for $1\leq i,j \leq n$, $i\neq j,\sigma(j)$, is called the {\it even elementary orthogonal group} ${\rm EO}_{2r}(R)$.
	
	\vspace{1mm}
	In \cite{FC15}, Fasel-Calm\`{e}s introduced some elementary orthogonal transformations corresponding to the root system ${B}_l$ as follows:

	\begin{enumerate}
		\item $\lambda \mapsto Id +\lambda(e_{1,2i+1}-2e_{2i,1}-\lambda e_{2i,2i+1})$ 
		\item $\lambda \mapsto Id + \lambda(e_{1,2i} -2e_{2i+1,1} - \lambda e_{2i+1,2i})$
		\item $\lambda \mapsto Id + \lambda(e_{2i,2j} - e_{2j+1,2i+1})$
		\item $\lambda \mapsto Id + \lambda(e_{2i,2j+1} - e_{2j,2i+1})$
		\item $\lambda \mapsto Id+ \lambda(e_{2i+1,2j} - e_{2j+1,2i})$
	\end{enumerate}
	Let $F_{i}^1, F_{i}^2, F_{i,j}^3, F_{i,j}^4,F_{i,j}^5$ denote the matrices corresponding to these transformations.
	
	\vspace{1mm} 
	By direct computation, we can see that the Fasel matrices satisfy the following relations:
	$$F_{i,j}^3(z)=[F_{i}^2(z),F_{j}^2(1)],\,
	F_{i,j}^4= [F_{i}^1(z),F_{j}^1(1)] \mbox{ and }
	F_{i,j}^5= [F_{i}^1(z),F_{j}^2(1)].$$ 
	Thus the group generated by the matrices given in equations (1) to (5) is same as the group generated by the matrices $F_{1}^i(z)$ and $F_2^i(z)$. Also, we can easily verify that the matrices $F_{1}^i(z)$ and $F_2^i(z)$, for $1 \leq i \leq r,$ and $z\in R$ are orthogonal matrices.
	Define the {\it the odd elementary orthogonal group} as the subgroup of the orthogonal group generated by the matrices of the form $F_{1}^i(z)$ and $F_2^i(z)$, for $1 \leq i \leq r,$ and $z\in R$, and denote it by ${\rm EO}_{2r+1}(R)$. 
	
	\vspace{1mm}
	
	Now we recall the definition of relative version of the elementary orthogonal group. Let $I$ be an ideal of $R$. The {\it true relative group} of even rank ${\rm EO}_{2r}(I)$ is the group generated by $oe_{ij}(z)$ for $z\in I$, and $1\leq i,j \leq 2r$, $i\neq j, \sigma(j)$, and the {\it true relative group} of odd rank ${\rm EO}_{2r+1}(I)$ is the group generated by $F_i^1(z)$ and $F_i^2(z)$, for $z \in I$. For any $n \in \mathbb N$, the normal closure of ${\rm EO}_n(I)$ in ${\rm EO}_n(R)$ is called the {\it relative elementary orthogonal group}, denoted by ${\rm EO}_n(R,I)$.
	
	\vspace{2mm}
	{\bf Remark:} The notation of {\it true relative group} is proposed by J. Tits in \cite{JTits1976}, while the terminology of {\it true relative group} is established by B. Nica in \cite{BNica2015} for the elementary linear groups.
	\vspace{2mm}
	
	Now we consider the orthogonal group on a quadratic $R$-module. A quadratic $R$-module $(M,\langle \ ,\ \rangle)$ is a finitely generated projective $R$-module $M$ together with a quadratic form $q$ on it, where  $\langle \ ,\ \rangle$ is the bilinear form associated with the quadratic form $q$ on $M$. The bilinear form $\langle \ ,\ \rangle$ on $M$ is said to be {\it non-degenerate} if $\langle x,y \rangle =0$ for all $y \in M$ implies $x=0$. A quadratic R-module is called a quadratic space if the corresponding bilinear form is non-degenerate.
	
	\vspace{1mm}
	The orthogonal group ${\rm O}_{R}(M)$ corresponding to a quadratic $R$-module $M$ is defined as follows: $${\rm O}_R(M):=\{ \sigma \in {\rm End}(M) | \langle \sigma x, \sigma y \rangle=\langle x, y \rangle\},$$
	where ${\rm End}(M)$ denotes the set of all endomorphisms on $M$.
	\begin{definition}[{{\it Hyperbolic space}}]
		Let $P$ be a finitely generated projective $R$-module. Then the {\it hyperbolic space} on $P$, denoted by $\mathbb{H}(P)$, is the quadratic space $P\oplus P^*$ with the bilinear form defined as follows:
		$$\langle(x_1,f_1),(x_2,f_2)\rangle_h :=f_2(x_1)+f_1(x_2), \mbox{ for }x_1,x_2\in P \mbox{ and } f_1,f_2 \in P^*,$$ where $P^*$ is the dual space ${\rm Hom}_R(P,R)$ of $P$.
	\end{definition}
	
	\section{Eichler-Siegel-Dickson Transformations}
	In this section, we study the classical Eichler transvection on a quadratic $R$-space. A detailed account of these transformations appeared in the works of Dickson, Siegel, Eichler, and C.T.C Wall in \cite{Dickson1958,Siegel1939,Eichler1952,Wall1964}. Later, H Bass, in \cite{Bass1973}, generalised the theory and defined the unitary transformations to prove stability theorems on the general unitary modules. 
	
	\begin{definition}
		Consider a non-degenerate quadratic $R$-module $(M,\langle\ ,\ \rangle,q)$. Let $u,v \in M$ with $\langle u,v \rangle =0$, $q(u)=0$, $r=q(v)$ and $u$ is unimodular. Define the {\it Eichler-Siegel-Dickson (ESD) transvection} $\sigma_{u,v} \in {\rm End}(M)$ by
		\begin{equation*}
			\sigma_{u,v}(x) = x + u \langle v,x \rangle-v \langle u,x \rangle-ur \langle u,x \rangle.
		\end{equation*}
		
		\noindent The group generated by these transvections is called the {\it Eichler-Siegel-Dickson transvection group} on $M$ and is denoted by ${\rm TransO} (M,\langle\ ,\ \rangle)$.
	\end{definition}

	{\bf Remark:} Let $V$ be a vector space over a skew field (division ring). Then a linear transformation $T:V \rightarrow V$ is 
	called a {\it transvection} if ${\rm rank}(T-I)=1$ and ${\rm Im}(T-I)\subseteq {\rm Ker}(T-I)$. 
	
	For an ESD transvection $\sigma_{u,v}$ we have the following properties:
	
	\begin{enumerate}
		\item $\sigma_{u,v}(u) = u + u \langle v,u \rangle-v \langle u,u \rangle-ur \langle u,u \rangle=u$.
		\item $\sigma_{u,v}(v) = v + u \langle v,v \rangle-v \langle u,v \rangle-ur \langle u,v \rangle=v+2ru$.
	\end{enumerate}
	
	Thus, \begin{equation*}
		\begin{split}
			(\sigma_{u,v}-I)(x) &= \langle v,x \rangle u- \langle u,x \rangle v -r \langle u,x \rangle u\\
			\sigma_{u,v}((\sigma_{u,v}-I)(x))&= \langle v,x \rangle \sigma_{u,v}(u)- \langle u,x \rangle\sigma_{u,v}(v)- r\langle u,x \rangle\sigma_{u,v}(u)\\
			&= \langle v,x \rangle u - \langle u,x \rangle(v+2ru)- r\langle u,x \rangle u\\
			&=(\sigma_{u,v}-I)(x)- 2ru\langle u,x\rangle.
		\end{split}
	\end{equation*}
	
	Consequently, ESD transvection is not a $transvection$. But we follow the terminology as used in \cite{Bass1973}. In \cite{HahnOMeara1989}, this map is mostly referred to as {\it Eichler transformations}.
	
	\vspace{2mm}
	
	In this paper, we compare the transformation group ${\rm TransO} (M,\langle\ ,\ \rangle)$ with the Dickson-Siegel-Eichler-Roy (DSER) elementary orthogonal group introduced by Amit Roy in \cite{Roy1968}. Amit Roy used the following transformations to prove the cancellation theorem for quadratic modules.
	
	\begin{definition}
		Let $(Q,\langle\ ,\ \rangle)$ be a quadratic $R$-space over $R$, and $M=Q \perp \mathbb{H}(P)$ for some finitely generated projective $R$-module $P$. Then there is an isomorphism $d_{\langle \ ,\ \rangle}:Q \to Q^* $, defined as $ d_{\langle\ ,\ \rangle}(z)(w) = \langle z,w\rangle$, for $z,w \in Q$.
		
		\vspace{1mm}
		
		Let $\alpha:Q \to P$ be an $R$-linear map. Corresponding to $\alpha, $ consider the transpose map $\alpha^t:P^* \to Q^*$ defined by $\alpha^t(\phi_p)=\phi_p\circ\alpha $, for $\phi_p \in P^*$. Now let $\alpha^*:P^* \to Q $ be defined as $\alpha^*=d_{\langle\ ,\ \rangle}^{-1}\circ \alpha^t$. Then the orthogonal transformation $E_{\alpha} $ on $M$ is defined as:
		$$E_{\alpha}(z,x,f)= (z-\alpha^*(f), x+\alpha(z)-\frac{1}{2}\alpha \alpha^*(f),f),~ \mbox{ for } ~z \in Q,~ x \in P \mbox{ and } f\in P^*.$$
		Similarly for an $R$-linear map $\beta:Q \to P^*$, there exists $\beta^t : P^{**} \to Q^*$ defined by $\beta^t(\phi_p^*)=\phi_p^* \circ \beta$ for $\phi_p^*\in P^{**}$. Now define $\beta^*:P \to Q $ as $\beta^*=d_{\langle \ ,\ \rangle}^{-1}\circ \beta^t \circ i$, where $i $ is the natural isomorphism from $P \to P^{**}$. Then the orthogonal transformation $E_{\beta}^* $ on $M$ is defined as:
		$$E^*_{\beta}(z,x,f)= (z-\beta^*(x),x,f+\beta(z)-\frac{1}{2}\beta\beta^*(x)), ~\mbox{ for } ~z \in Q, ~x \in P \mbox{ and } f\in P^*.$$
		
		The orthogonal transformations $E_{\alpha}$ and $E^{*}_{\beta}$ are called {\it DSER elementary orthogonal transformations}. The group ${\rm EO}_{R}(Q,\mathbb{H}(P))$ generated by these orthogonal transformations is called the {\it DSER elementary orthogonal group}. 
	\end{definition}
	
	\begin{definition}[{\bf Elementary orthogonal transvections}]
		Let $Q$ be a non-degenerate quadratic $R$-space. Take $w \in Q$ and $M = Q\perp \mathbb{H}(R)^{m}$. Let $\{x_{i}, f_{i}\}_{i=1}^{m}$ be the basis for $\mathbb{H}(R)^{m}$. For $(z,r,s) \in M$, define  two maps  $(E_{1i}^w)$ and $(E_{2i}^w)$ on $M$ as 
		
		$$E_{1i}^w: (z,r,s) \mapsto (z- \langle s,x_{i} \rangle w,\ r+\langle z,w \rangle x_{i} - \langle s,x_{i} \rangle q(w) x_{i},\ s),$$
		$$E_{2i}^w: (z,r,s) \mapsto (z- \langle r,f_{i} \rangle w,\ r, \ s+\langle z,w \rangle f_{i} - \langle r,f_{i} \rangle q(w) f_{i}).$$
	\end{definition}
	One can verify that the maps $E_{1i}^w$ and $E_{2i}^w$ are orthogonal transvections on $M$ and the group generated by these transvections is called ${\rm ETransO}(M,\langle , \rangle)$.
	
	In particular, for $M = Q\perp \mathbb{H}(R)$, consider the basis $\{x,f\}$ for $\mathbb{H}(R)$. For $(z,r,s) \in M$, define the orthogonal transvections $E_{1}^w$ and $E_{2}^w$ as  
	$$E_1^w: (z,r,s) \mapsto (z- \langle s,x\rangle w,\ r+\langle z,w \rangle x - \langle s, x \rangle q(w)x,\ s),$$
	$$E_2^w: (z,r,s) \mapsto (z- \langle r, f\rangle w,\ r, \ s+\langle z,w \rangle f - \langle r,f \rangle q(w)f).$$

	\section{Comparison of DSER elementary orthogonal transformation group and the elementary orthogonal transvection group}
	In this section we prove the equality of the DSER elementary orthogonal group and the elementary transvection group when the underlying projective module $P$ is free.
	First we prove the result for a free module of rank one and then extend the result to an arbitrary free module.
	\begin{lemma}\label{DSER=ETrans1}
		The DSER elementary orthogonal group ${\rm EO}_{R}(Q,\mathbb{H}(R))$ on $M= Q\perp \mathbb{H}(R) $ where $Q$ is a quadratic space over $R$ coincides with $ {\rm ETransO}({M,\langle,\rangle })$.
	\end{lemma}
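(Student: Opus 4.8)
The plan is to show that the two generating sets coincide \emph{as sets of orthogonal transformations of $M$}, via a natural bijection between their parameters. Since $P = R$, any $R$-linear map $\alpha\colon Q \to P$ is the same datum as an element of $Q^*$, and because $Q$ is a non-degenerate quadratic space the map $d_{\langle\ ,\ \rangle}\colon Q \to Q^*$ is an isomorphism. Hence there is a unique $w \in Q$ with $\alpha = d_{\langle\ ,\ \rangle}(w)$, that is, $\alpha(z) = \langle w, z\rangle$ for all $z \in Q$. The same argument, run through the natural identification $P^* \cong R$, attaches to each $\beta\colon Q \to P^*$ a unique $w \in Q$. The first step is to record these bijections $\alpha \leftrightarrow w$ and $\beta \leftrightarrow w$ explicitly.

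Next I would compute the auxiliary maps occurring in the DSER transformations under these identifications. For $\alpha = d_{\langle\ ,\ \rangle}(w)$ one checks directly from the definition $\alpha^* = d_{\langle\ ,\ \rangle}^{-1} \circ \alpha^t$ that $\alpha^*(s) = \langle s, x\rangle\, w$ for $s \in P^*$, where $\langle s, x\rangle$ denotes the scalar obtained by pairing $s$ against the basis vector $x$. Consequently $\alpha\alpha^*(s) = \langle s, x\rangle\,\langle w, w\rangle\, x = 2q(w)\langle s, x\rangle\, x$, using the relation $\langle w, w\rangle = 2q(w)$ between the bilinear form and its quadratic form (valid since $2$ is invertible). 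The factor $\tfrac12$ in the definition of $E_\alpha$ then cancels this $2$, producing exactly the coefficient $q(w)$.

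Substituting these into the defining formula for $E_\alpha$ and writing a general element of $M$ as $(z, r, s)$, I would verify term by term that
$$E_\alpha(z,r,s) = \big(z - \langle s, x\rangle\, w,\ r + \langle z,w\rangle\, x - \langle s, x\rangle\, q(w)\, x,\ s\big) = E_1^w(z,r,s),$$
using the symmetry $\langle z, w\rangle = \langle w, z\rangle$ in the middle slot. The computation of $\beta^* = d_{\langle\ ,\ \rangle}^{-1}\circ\beta^t\circ i$ is entirely parallel, with the natural isomorphism $i\colon P \to P^{**}$ playing the role of the earlier identification, and it yields $\beta^*(r) = \langle r, f\rangle\, w$ and hence $E_\beta^* = E_2^w$ for the corresponding $w$.

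Having matched the generators one-to-one, both inclusions follow at once: every DSER generator $E_\alpha$, $E_\beta^*$ is an elementary orthogonal transvection $E_1^w$, $E_2^w$, and conversely each $E_1^w$, $E_2^w$ arises from the $\alpha$, $\beta$ dual to $w$. Therefore the two groups coincide. The only genuine work is bookkeeping: keeping the identifications $P \cong R$ and $P^* \cong R$ consistent across the transposes $\alpha^t$, $\beta^t$ and the duality isomorphism $d_{\langle\ ,\ \rangle}$, and confirming that the $\tfrac12$'s combine with $\langle w, w\rangle = 2q(w)$ so that the two formulas agree on the nose. I expect this matching of the quadratic-form coefficient $q(w)$ against $\tfrac12\langle w, w\rangle$ to be the one place where care is needed.
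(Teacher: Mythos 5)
Your proposal is correct and takes essentially the same route as the paper's own proof: parametrize each $\alpha\colon Q\to P$ (resp.\ $\beta\colon Q\to P^*$) by a vector $w\in Q$, compute $\alpha^*$ and $\tfrac12\alpha\alpha^*$ under the identification $P\cong R$, and verify on generators that $E_\alpha=E_1^w$ and $E_\beta^*=E_2^w$, so the two groups have the same generating set. If anything, you are slightly more explicit than the paper in invoking the isomorphism $d_{\langle\ ,\ \rangle}\colon Q\to Q^*$ to ensure that \emph{every} DSER generator arises from some $w$ (needed for the inclusion ${\rm EO}_R(Q,\mathbb{H}(R))\subseteq{\rm ETransO}(M,\langle\,,\,\rangle)$), a point the paper leaves implicit.
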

	\begin{proof}
		
		In \cite{Roy1968}, Amit Roy defined the DSER transformation $E_\alpha$ and $E_{\beta} ^ *$ on $Q \perp \mathbb{H}(P)$ in the particular case when $P=Rx$, is a free $R$-module of rank $1$.
		
		Let $f \in P^*$ with $f(x)=1$, so that $\{x,f\}$ is an $R$-basis of $\mathbb{H}(P)$. For $w \in Q$, consider the linear map $ \alpha : Q \to Rx $, which can be expressed in the form,
		$$\alpha(z)=\langle z,w\rangle x,$$ for $z \in Q$. 
		
		\noindent Since $w$ represents $\alpha$, we can denote $E_\alpha$ by $E_w$ and can be expressed as 
		\begin{align*}
			&E_w(z) = z+ \langle z,w \rangle x, \mbox{ for } z \in Q, \\
			&E_w(x) = x, \mbox{ and } \\
			&E_w(f) = -w-q(w)x+f.
		\end{align*}
		
		\noindent Similarly the orthogonal transformation $E^*_\beta$ for some $\beta : Q \to R^*=R $, such that $\beta(z)=\langle z,w\rangle f$ can be expressed as;
		\begin{align*}
			&E^*_w(z) = z+ \langle z,w \rangle f, \mbox{ for } z \in Q, \\
			&E^*_w(x) = -w+x-q(w)f, \mbox{ and } \\
			&E^*_w(f) = f.
		\end{align*}
		
		Here if we take $P=R$, then for some $(r,s) \in \mathbb{H}(R)= R^2$ such that $r=r'x $ and $s=s'f$ with $r's'=1$ and $z \in Q$, we have
		\begin{align*}
			& E_w(z) = z+ \langle z,w \rangle x,\\
			& E_w(r) = r'E_w(x)= r'x =r,\\
			& E_w(s) = s'E_w(f) = -s'w-s'q(w)x+s=-\langle s,x \rangle w -\langle s,x \rangle q(w)x+s.
		\end{align*}
		
		Therefore we can observe that 
		\begin{align*}
			&E_w(z,r,s)=(z- \langle s,x\rangle w,\ r+\langle z,w \rangle x - \langle s, x \rangle q(w)x,\ s)=E_1^w(z,r,s)\\
			&E_w^*(z,r,s)=(z-\langle r, f\rangle w,r,s + \langle z,w \rangle -rq(w))=E_2^w(z,r,s),
		\end{align*}
		
		for all $z \in Q$ and $(r,s) \in \mathbb{H}(R)$. Hence we get $E_w=E_1^w \text{ and } E_w^*= E_2^w .$
	\end{proof}
	
	The {\it splitting lemma} for the DSER elementary orthogonal group is proved in {\cite{Suresh1994}} as follows:
	
	\begin{lemma}[{\rm \cite[Lemma~1.2]{Suresh1994}}]\label{splitting}
		Let $\alpha_1,\alpha_2\in {\rm Hom }(Q,P) ( {\rm or }\; \beta_1,\beta_2\in {\rm Hom}(Q,P^*))$, then we have the splitting for the elementary transformations $E_{\alpha}$ $($and $E_{\beta}^*)$ as follows

		$$E_{\alpha_1+\alpha_2}=E_{\frac{\alpha_1}{2}} E_{\alpha_2} E_{\frac{\alpha_1}{2}}({\rm or \;} E_{\beta_1+\beta_2}^*=E_{\frac{\beta_1}{2}}^* E_{\beta_2}^* E_{\frac{\beta_1}{2}}^* ).$$
	\end{lemma}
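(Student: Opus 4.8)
The statement is an identity between two explicit orthogonal transformations, so the plan is to prove it by direct computation, applying the defining formula for $E_\alpha$ one factor at a time. The only structural facts required are that the assignment $\alpha \mapsto \alpha^*$ is $R$-linear — which is immediate, since both the transpose $\alpha \mapsto \alpha^t$ and post-composition with the fixed isomorphism $d_{\langle\ ,\ \rangle}^{-1}$ are $R$-linear, giving $(\alpha_1+\alpha_2)^* = \alpha_1^* + \alpha_2^*$ — and that $2 \in R$ is invertible, so that $\alpha_1/2$ is defined and $(\alpha_1/2)^* = \alpha_1^*/2$.

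First I would record the two-fold composition law. Writing out $E_\alpha(E_\gamma(z,x,f))$ and using the linearity of $\ ^*$, one obtains
$$E_\alpha E_\gamma(z,x,f) = \Big(z-(\alpha^*+\gamma^*)(f),\ x+(\alpha+\gamma)(z)-\tfrac{1}{2}(\alpha\alpha^*+\gamma\gamma^*)(f)-\alpha\gamma^*(f),\ f\Big).$$
Comparing this with the formula for $E_{\alpha+\gamma}$ reveals that the defect from being a homomorphism is exactly the \emph{asymmetric} cross-term $\alpha\gamma^*$ appearing in place of the \emph{symmetric} combination $\tfrac{1}{2}(\alpha\gamma^*+\gamma\alpha^*)$; this is precisely the obstruction the splitting lemma is designed to absorb by a symmetric conjugation.

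Next I would apply this composition law twice to evaluate $E_{\alpha_1/2}E_{\alpha_2}E_{\alpha_1/2}$. The $Q$- and $P^*$-components collapse at once to $z-(\alpha_1^*+\alpha_2^*)(f) = z-(\alpha_1+\alpha_2)^*(f)$ and to $f$, matching $E_{\alpha_1+\alpha_2}$. For the $P$-component, the part linear in $z$ is $(\tfrac{1}{2}\alpha_1+\alpha_2+\tfrac{1}{2}\alpha_1)(z)=(\alpha_1+\alpha_2)(z)$, and the remaining $f$-dependent terms in $P$ reorganize, after collecting the three contributions of the two copies of $\tfrac{1}{2}\alpha_1$ together with the cross-terms generated at each composition, into $-\tfrac{1}{2}(\alpha_1+\alpha_2)(\alpha_1+\alpha_2)^*(f)$. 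This is exactly the $P$-component of $E_{\alpha_1+\alpha_2}$, which establishes the identity.

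The step I expect to be the main — and essentially only — obstacle is this final bookkeeping of the $f$-dependent terms in the $P$-component: splitting $\alpha_1$ into two equal halves is what symmetrizes the otherwise asymmetric cross-terms $\alpha_i\alpha_j^*$ into the correct quadratic expression $(\alpha_1+\alpha_2)(\alpha_1+\alpha_2)^*$, and it is here that invertibility of $2$ is genuinely used. The dual identity $E^*_{\beta_1+\beta_2}=E^*_{\beta_1/2}E^*_{\beta_2}E^*_{\beta_1/2}$ then follows by the identical computation with the roles of the $P$- and $P^*$-components interchanged, using the linearity of $\beta\mapsto\beta^*$ and the defining formula for $E^*_\beta$.
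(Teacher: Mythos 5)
Your proposal is correct: the composition law $E_\alpha E_\gamma(z,x,f) = \bigl(z-(\alpha^*+\gamma^*)(f),\ x+(\alpha+\gamma)(z)-\tfrac{1}{2}(\alpha\alpha^*+\gamma\gamma^*)(f)-\alpha\gamma^*(f),\ f\bigr)$ is right, and applying it twice with $a=\alpha_1/2$ does make the $f$-dependent terms collect into $2aa^*+a\alpha_2^*+\alpha_2 a^*+\tfrac{1}{2}\alpha_2\alpha_2^* = \tfrac{1}{2}(\alpha_1+\alpha_2)(\alpha_1+\alpha_2)^*$, which is exactly what is needed. The paper itself gives no proof of this lemma (it is quoted from Suresh, where it is likewise established by direct computation), so your verification is essentially the standard argument and fills in the details the paper omits.
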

	Now, we can prove the equality of the elementary transvection group and the DSER elementary orthogonal group ${\rm EO}_{R}(Q,\mathbb{H}(P)$, where $P$ is a free module of rank $m$. 
	\begin{lemma}\label{DSER=ETrans}
		For a quadratic space $Q$ over $R$, The DSER elementary orthogonal group ${\rm EO}_{R}(Q,\mathbb{H}(R)^m)$ on $M= Q\perp \mathbb{H}(R)^m$ coincides with $ {\rm ETransO}({M,\langle\,,\,\rangle })$.
	\end{lemma}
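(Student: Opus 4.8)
The plan is to deduce the general free case from the rank-one case (Lemma~\ref{DSER=ETrans1}) by peeling off one hyperbolic plane at a time with the splitting lemma (Lemma~\ref{splitting}). Fix the basis $\{x_i,f_i\}_{i=1}^m$ of $\mathbb{H}(R)^m$ with $f_j(x_i)=\delta_{ij}$. Since $Q$ is non-degenerate, $d_{\langle\,,\,\rangle}\colon Q\to Q^*$ is an isomorphism, so every $R$-linear form on $Q$ has the shape $z\mapsto\langle z,w\rangle$ for a unique $w\in Q$; consequently any $\alpha\in{\rm Hom}(Q,P)$ is determined by vectors $w_1,\dots,w_m\in Q$ through $\alpha(z)=\sum_{i=1}^m\langle z,w_i\rangle x_i$, and similarly for $\beta\in{\rm Hom}(Q,P^*)$.

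For the inclusion ${\rm ETransO}(M,\langle\,,\,\rangle)\subseteq{\rm EO}_R(Q,\mathbb{H}(R)^m)$, I would introduce for each index $i$ and each $w\in Q$ the single-coordinate map $\alpha_i^w\colon z\mapsto\langle z,w\rangle x_i$. A direct computation of $\alpha_i^{w*}$ (one checks $\alpha_i^{w*}(f_j)=\delta_{ij}\,w$, whence $\alpha_i^w\alpha_i^{w*}(f_j)=\delta_{ij}\langle w,w\rangle x_i=2\delta_{ij}q(w)x_i$) then shows that $E_{\alpha_i^w}$ acts exactly as $E_{1i}^w$, and likewise $E^*_{\beta_i^w}=E_{2i}^w$. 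This is the verbatim analogue of Lemma~\ref{DSER=ETrans1} carried out on the $i$-th hyperbolic plane, the only new point being that the remaining planes are fixed by these maps. Hence every generator of ${\rm ETransO}$ is a DSER generator.

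For the reverse inclusion I would start from an arbitrary generator $E_\alpha$ and split $\alpha=\sum_{i=1}^m\alpha_i^{w_i}$ into its single-coordinate pieces. Iterating Lemma~\ref{splitting} then writes $E_\alpha$ as a product of factors of the form $E_{c\,\alpha_i^{w_i}}=E_{\alpha_i^{cw_i}}$, each of which is single-coordinate and hence equals an elementary transvection $E_{1i}^{cw_i}$ by the previous paragraph; the same argument with the $\beta$-form of the splitting lemma handles $E^*_\beta$. Therefore $E_\alpha,E^*_\beta\in{\rm ETransO}$, and combining the two inclusions yields the equality of the groups.

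The step I expect to require the most care is the bookkeeping in the iterated splitting: I must check that halving and conjugating the pieces $\alpha_i^{w_i}$ never mixes coordinates, so that every factor produced stays single-coordinate and can be recognised as a transvection. The identification $E_{\alpha_i^w}=E_{1i}^w$ itself is a routine but slightly delicate computation, since one has to confirm that the correction term $\tfrac12\alpha\alpha^*$ contributes only to the $i$-th plane and leaves the $j$-th plane ($j\neq i$) untouched.
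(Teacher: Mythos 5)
Your proposal is correct and follows essentially the same route as the paper: both prove the inclusion ${\rm ETransO}\subseteq{\rm EO}_R$ by identifying each generator $E_{1i}^w$, $E_{2i}^w$ with a DSER transformation attached to the single-coordinate map $z\mapsto\langle z,w\rangle x_i$ (resp.\ $f_i$), and both prove the reverse inclusion by decomposing an arbitrary $\alpha$ into its coordinate pieces and iterating the splitting lemma $E_{\alpha_1+\alpha_2}=E_{\alpha_1/2}E_{\alpha_2}E_{\alpha_1/2}$. Your write-up is in fact slightly more careful than the paper's on two points it leaves implicit: that the correction term $\tfrac12\alpha_i\alpha_i^{*}$ only touches the $i$-th hyperbolic plane, and that halving a single-coordinate map keeps it single-coordinate.
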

	
	\begin{proof}
		Define $\alpha_i:Q \rightarrow P$ by $$\alpha_i(z)=\langle z, w \rangle x_i.$$
		Then clearly, we have $E_{1i}^w=E_{\alpha_i} \in {\rm EO_{R}(Q,\mathbb{H}(R)^m)}$ for $i=1,2, \ldots, m$. Similarly we can prove that $E_{2i}^w=E_{\beta_i} \in {\rm EO_{R}(Q,\mathbb{H}(R)^m)}$ by choosing $\beta_i:Q \rightarrow P^*$ to be $$\beta_i(z)=\langle z, w \rangle f_i.$$ Thus ${\rm ETransO}(M,\langle\,,\,\rangle) \subseteq {\rm EO}_{R}(Q,\mathbb{H}(R)^m)$.
		
		\vspace{2mm}
		
		Now for any $\alpha:Q \rightarrow P$, we have $\alpha(q)=\sum \alpha(q)_ix_i$, where $\alpha(q)_i$ is the coefficient of $x_i$ in the representation of $\alpha(q)$ in $P$. Define $$\alpha_i(q)=\alpha(q)_ix_i.$$ By Theorem \ref{DSER=ETrans1}, we have $E_\frac{\alpha_i}{2} \in {\rm ETransO}(M,\langle\,,\,\rangle)$ and applying the fact that $\alpha =\sum \alpha_i$ in Theorem \ref{splitting}, we get ${\rm E_\alpha}\in {\rm ETransO}(M,\langle\,,\,\rangle)$. Similarly for $\beta:Q\rightarrow P^*$, we have $E_{\frac{\beta_i}{2}} \in {\rm ETransO}(M,\langle\,,\,\rangle)$, which implies ${\rm EO}_{R}(Q,\mathbb{H}(R)^m) \subseteq {\rm ETransO}(M,\langle\,,\,\rangle)$.
	\end{proof}

	\section{Relative subgroups}
	Similar to the definition of relative subgroups of classical groups with respect to an ideal $I$ of $R$, we have the notion of relative subgroups of the DSER elementary orthogonal group and the transvection group. The relative transvection groups are defined as follows:
	
	Let $I$ be an ideal of $R$. 
	
	\begin{definition}{\it The relative ESD transformation} on a finitely generated projective $R$-module $M$ with respect to the ideal $I$ is a transformation of the form
		\begin{equation*}
			\sigma_{u,v}(x) = x + u \langle v,x \rangle - v \langle u,x \rangle -ur \langle u,x \rangle,
		\end{equation*}
		for $u \in M$, $v \in IM $. The normal closure of the group generated by the relative ESD transvection in ${\rm TransO}(M,\langle\ ,\ \rangle)$ is called {\it the relative ESD transvection group} and is denoted by ${\rm TransO}(M,IM,\langle\ ,\ \rangle)$. 
	\end{definition}
	
	Now we define the relative DSER elementary orthogonal groups and relative elementary transvection groups as follows:
	
	\begin{definition}[{{\it Relative DSER Elementary transformations}}]
		The DSER elementary transformations $E_{\alpha}$ and $E_\beta^*$ for $\alpha :Q \rightarrow IP $ and $\beta: Q \rightarrow IP^*$ are called the relative DSER elementary orthogonal transformations. The group generated by relative DSER transformations is called  {\it the true relative DSER elementary orthogonal group} and is denoted by ${\rm EO}_{I}(Q ,\mathbb{H}(P))$. The normal closure of ${\rm EO}_{I}(Q ,\mathbb{H}(P))$ in ${\rm EO}_{R}(Q,\mathbb{H}(P))$ is the {\it relative DSER elementary orthogonal group} and is denoted by ${\rm EO}_{(R,I)}(Q ,\mathbb{H}(P))$.
	\end{definition}
	
	\begin{definition}[{\it Relative elementary orthogonal transvections}] 
		
		For $w \in IQ$ and $\{x_{i}, f_{i}\}_{i=1}^{m}$ be the basis for $\mathbb{H}(R)^{m}$, the group generated by the orthogonal transvections on $M$ as $(E_{1i}^w)$ and $(E_{2i}^w)$is called the {\it true relative elementary orthogonal transvections group} and is denoted by ${\rm ETransO}(IM).$ The normal subgroup generated by these transformations is called the {\it relative elementary orthogonal transvection group} and is denoted by ${\rm ETransO}(M,IM)$.
	\end{definition}
	
	\vspace{2mm}
	
	{\bf Remark:} In particular, if $w \in IQ,$ then $(w,0,0) \in IM$. Thus, the relative DSER elementary transformations are relative ESD transformations. i.e.,
	${\rm EO}_{(R,I)}(Q ,\mathbb{H}(R)^m)$ is a subgroup of ${\rm TransO}(M,IM,\langle \ ,\ \rangle)$. In Lemma \ref{DSER=ETrans}, we have proved the equality of elementary transvection group and the DSER elementary orthogonal group. This equality holds for the relative case as well. If we take $w\in IQ$, we get the corresponding $\alpha,\beta\in {\rm Hom}(Q,IR^m)$ and vice versa. 
	
	\vspace{2mm} 
	
	In this paper, we consider the converse of this subset relation. We first consider the case when $Q$ is a free module of rank $n,$ and compare the group ${\rm EO}_{(R,I)}(Q,\mathbb{H}(R)^m)$ and ${\rm Trans}_{\rm O}(M,IM)$ with the usual elementary orthogonal group ${\rm EO}_{n+2m}(R,I)$.

	\section{DSER elementary orthogonal transformations in free case}
	
	Let $Q $ be a free module of rank $n$ with a bilinear form $\langle \ ,\ \rangle$ on it and P be a free module of rank $m$. Then $P^*$ will be a free module of rank $m$. Let $\{z_1,z_2,\ldots,z_n\}$ be a basis of $Q,\; \{x_1,x_2,\ldots, x_m\}$ be a basis of $P$ and $\{f_1,f_2,\ldots, f_m\}$ be the corresponding dual basis of $P^*$.
	
	\vspace{2mm}

	For $1\leq i \leq m $ and $1\leq j \leq n $, let $\alpha_{i,j}:Q \mapsto P$ be defined as $\alpha_{i,j}(z_j)=x_i$ and $\alpha_{i,j}(z_k)=0$, for all $k \neq j$. Similarly for $1\leq i \leq m $ and $1\leq j \leq n $, define $\beta_{i,j}:Q \mapsto P^*$ by $\beta_{i,j}(z_j)=f_i$ and $\beta_{i,j}(z_k)=0$, for all $k \neq j$.
	
	\vspace{2mm}
	
	Then $\{ \alpha_{i,j}\}$ generates the group ${\rm Hom}(Q,P)$ and $\{\beta_{i,j}\}$ generates the group ${\rm Hom}(Q,P^*)$. i.e., for every $\alpha \in {\rm Hom}(Q,P)$ and $\beta \in {\rm Hom}(Q,P^*)$ there exists $ c_{i,j},d_{i,j} \in R $ such that $\alpha = \sum_{i,j}c_{i,j}\alpha_{i,j}$ and $\beta = \sum_{i,j}d_{i,j}\alpha_{i,j}$. By splitting property of DSER elementary orthogonal transformations in \ref{splitting}, it can be verified that $ \{E_{r\alpha_{i,j}},E^*_{r\beta_{i,j}}\}_{r\in R} $ generates ${\rm EO}_{R}(Q,\mathbb{H}(R))$. Now, let us consider the matrices corresponding to the DSER elementary orthogonal transformations.
	
	\vspace{2mm}
	
	The matrix corresponding to $r\alpha_{i,j}=$ matrix corresponding to $r\beta_{i,j}= re_{i,j}$ and the matrix corresponding to the bilinear form on $Q$ is $M_{\langle \ ,\ \rangle}=(M_{ij})=(\langle z_i,z_j \rangle).$
	
	\vspace{2mm}

	Let $D=(d_{i,j})=M_{\langle \ ,\ \rangle}^{-1}$. Then the matrix corresponding to $$\alpha_{i,j}^* = D \circ \alpha_{i,j}^t =\begin{pmatrix}
		d_{11}&d_{12}&\ldots&d_{1j}&\ldots&d_{1n}\\
		d_{21}&d_{22}&\ldots&d_{2j}&\ldots&d_{2n}\\
		\vdots&\vdots&\ddots&\vdots&\ddots&\vdots\\
		d_{n1}&d_{n2}&\ldots&d_{nj}&\ldots&d_{nn}\\
	\end{pmatrix}{\alpha}_{i,j}^t = \begin{pmatrix}
		0&0&\ldots&d_{1j}&\ldots&0\\
		0&0&\ldots&d_{2j}&\ldots&0\\
		\vdots&\vdots&\ddots&\vdots&\ddots&\vdots\\
		0&0&\ldots&d_{nj}&\ldots&0\\
	\end{pmatrix}$$ (is an $n\times m $ matrix with $j^{th}$ column of $D$ as $i^{th}$ column).
	
	\vspace{2mm}
	
	\noindent  Now by definition we have,
	\begin{align*}
		&E_{r\alpha_{i,j}}(z_k)= z_k+r\alpha_{i,j}(z_k)= z_k+rx_i\delta_{j,k}, \mbox{ for } z_k \in Q,\\
		&E_{r\alpha_{i,j}}(x_k)= x_k, \mbox{ for } x_k \in P,\\
		&E_{r\alpha_{i,j}}(f_k)= -r\alpha_{i,j}^*(f_k)-\frac{r^2}{2}\alpha_{i,j} \alpha_{i,j}^*(f_k)+f_k=-r\sum_{l=1}^{n}d_{lj}z_l \delta_{i,k}-\frac{r^2}{2}d_{jj}x_{i} \delta_{i,k}+f_k, \mbox{ for } f_k \in P^*.
	\end{align*}
	
	\vspace{2mm}
	
	\noindent Then the matrix corresponding to $E_{r\alpha_{i,j}}$ is given by
	
	\vspace{2mm}
	
	$$\begin{pmatrix}
		I_{n\times n} &{\bf 0} &-r\alpha^*_{i,j}\\
		r\alpha_{i,j}&I_{m\times m}&-\frac{r^2}{2}\alpha_{i,j}\alpha^*_{i,j}\\
		{\bf 0}&{\bf 0}&I_{m \times m}\\
	\end{pmatrix}.$$
	
	\vspace{2mm}
	
	\noindent  Similarly for $E_{\beta_{i,j}}$, we have
	\begin{align*}
		&E^*_{r\beta_{i,j}}(z_k)= z_k+r\beta_{i,j}(z_k)=z_k+rf_i\delta_{j,k}, \mbox{ for } z_k \in Q,\\
		&E^*_{r\beta_{i,j}}(x_k)= -r\beta_{i,j}^*(x_k)+x_k-\frac{r^2}{2}\beta_{i,j}\beta_{i,j}^*(x_k)=-r\sum_{l=1}^{n}d_{lj}z_l \delta_{i,k}+x_k-\frac{r^2}{2}d_{jj}f_{i} \delta_{i,k}, \mbox{ for } x_k \in P,\\
		&E^*_{r\beta_{i,j}}(f_k)= f_k, \mbox{ for } f_k \in P^*.
	\end{align*}
	
	\noindent Then the matrix corresponding to $E_{r\beta_{i,j}}$ is given by
	$$\begin{pmatrix}
		I_{n\times n} &-r\beta^*_{i,j}&{\bf 0}\\
		{\bf 0}&I_{m \times m}&{\bf 0}\\
		r\beta_{i,j}&-\frac{r^2}{2}\beta_{i,j}\beta^*_{i,j}&I_{m\times m}\\
	\end{pmatrix}.$$
	
	\vspace{2mm}
	
	{\bf Remark:} In the subsequent sections, we shall consider the module $\mathbb{H}(R)^m$ by a change of basis to $\mathbb{H}(R^m)$.

	\section{Comparison of Relative DSER group and relative elementary orthogonal group}
	
	In this section, we prove the equality of DSER elementary orthogonal group ${\rm EO}_{R}(Q,\mathbb{H}(R))$ for a free quadratic $R$-space $Q$ of rank $n$ with the elementary orthogonal group ${\rm EO}_{n+2}(R)$. This section is divided into two parts, considering both the even case and odd case separately.
	First we prove Lemma \ref{EOeqEOStd} , which is a particular case of Lemma 2.7 in \cite{AmbilyRao2020}.
	
	\begin{lemma}\label{EOeqEOStd}
		If $(Q,q)$ be a free quadratic space of rank $n=2r$ for $r\geq 1$ and let $ \widetilde{\phi}_n$ be the matrix corresponding to $q$ with respect to the standard basis. Then $${\rm EO}_{R}(Q ,\mathbb{H}(R)) = {\rm EO}_{n+2}(R).$$
	\end{lemma}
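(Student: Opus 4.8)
The plan is to pass to matrices and show that, up to the relations already available, the two groups share the same generating set inside a single copy of ${\rm O}_{n+2}(R)$. First I would record that on $M=Q\perp\mathbb{H}(R)$, with the ordered basis $\{z_1,\dots,z_n,x,f\}$ and $\langle x,f\rangle=1$, the Gram matrix is the block sum $\widetilde{\psi}_r\perp\left(\begin{smallmatrix}0&1\\1&0\end{smallmatrix}\right)=\widetilde{\psi}_{r+1}=\widetilde{\phi}_{n+2}$, so that ${\rm EO}_R(Q,\mathbb{H}(R))$ and ${\rm EO}_{n+2}(R)$ are defined with respect to the same form and both sit in ${\rm O}_{n+2}(R)$. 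The decisive simplification is that, since $M_{\langle\,,\,\rangle}=\widetilde{\psi}_r$ is an involution, $D=M_{\langle\,,\,\rangle}^{-1}=\widetilde{\psi}_r$ has zero diagonal; hence $d_{jj}=0$ and the quadratic correction terms $-\tfrac{r^2}{2}\alpha_{1,j}\alpha_{1,j}^{*}$ and $-\tfrac{r^2}{2}\beta_{1,j}\beta_{1,j}^{*}$ appearing in the generator matrices of the preceding section vanish identically.

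Next I would read off the generators. With $P=R$ (so the only first index is $1$), the surviving entries of the matrix of $E_{r\alpha_{1,j}}$ are an $r$ in position $(n+1,j)$ and, because the $j$-th column of $D=\widetilde{\psi}_r$ is $e_{\sigma(j)}$, a $-r$ in position $(\sigma(j),n+2)$; since $\sigma(n+1)=n+2$ this is exactly $oe_{n+1,j}(r)=I+r\,e_{n+1,j}-r\,e_{\sigma(j),\sigma(n+1)}$. The same computation gives $E^{*}_{r\beta_{1,j}}=oe_{n+2,j}(r)$ for $1\le j\le n$. By the splitting Lemma~\ref{splitting} the elements $\{E_{r\alpha_{1,j}},E^{*}_{r\beta_{1,j}}\}_{r\in R,\,1\le j\le n}$ generate ${\rm EO}_R(Q,\mathbb{H}(R))$, and each is an $oe$-generator, so ${\rm EO}_R(Q,\mathbb{H}(R))\subseteq{\rm EO}_{n+2}(R)$.

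For the reverse inclusion I must manufacture the remaining generators $oe_{k,j}(c)$ with $1\le k,j\le n$. I would first invoke the symmetry $oe_{i,j}(a)=oe_{\sigma(j),\sigma(i)}(-a)$, which turns the generators already in hand into the full families $oe_{i,n+1}(a)$ and $oe_{i,n+2}(a)$ for $1\le i\le n$. Then, for $k\neq j$ with $k\neq\sigma(j)$, a direct commutator computation—using that the relevant matrix units multiply to single units and that the square of each nilpotent part vanishes—yields the clean Chevalley-type relation
\begin{equation*}
	[oe_{k,n+1}(c),\,oe_{n+1,j}(1)]=oe_{k,j}(c).
\end{equation*}
This exhibits every missing generator as a product of elements of ${\rm EO}_R(Q,\mathbb{H}(R))$, giving ${\rm EO}_{n+2}(R)\subseteq{\rm EO}_R(Q,\mathbb{H}(R))$ and hence equality.

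The routine but delicate point is the last one: verifying that in the commutator all higher-order contributions cancel exactly, so that $[oe_{k,n+1}(c),oe_{n+1,j}(1)]$ is precisely $oe_{k,j}(c)$ and not merely congruent to it, and checking that the index constraints $k\neq j,\sigma(j)$ are exactly those for which $oe_{k,j}$ is a legitimate generator (when $k=\sigma(j)$ the commutator collapses to the identity, matching the exclusion). Once this index bookkeeping is pinned down, the two inclusions combine to give ${\rm EO}_R(Q,\mathbb{H}(R))={\rm EO}_{n+2}(R)$.
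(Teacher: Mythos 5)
Your proposal is correct and follows essentially the same route as the paper: both proofs establish the two inclusions by realizing DSER generators as elementary orthogonal matrices for the form $\widetilde{\phi}_{n+2}$ and then recovering the remaining generators $oe_{k,j}(c)$, $1\le k,j\le n$, from Chevalley-type commutator relations (your relation $[oe_{k,n+1}(c),oe_{n+1,j}(1)]=oe_{k,j}(c)$ and its degeneration to the identity when $k=\sigma(j)$ both check out). The only organizational difference is in the forward inclusion, where you identify the coordinate generators exactly, $E_{r\alpha_{1,j}}=oe_{n+1,j}(r)$ and $E^{*}_{r\beta_{1,j}}=oe_{n+2,j}(r)$, and invoke the splitting Lemma~\ref{splitting}, whereas the paper writes the transvections $E_1^w$, $E_2^w$ for arbitrary $w\in Q$ as explicit products of $oe$-generators and uses the single-generator identification $oe_{i,n+2}(x)=E_1^{-xe_i}$ only in the reverse inclusion.
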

	
	\begin{proof}
		
		Let $M=Q\perp\mathbb{H}(R)$. We have already proved that ${\rm EO}_{R}(Q ,\mathbb{H}(R)) = {\rm ETransO}(Q\perp \mathbb{H}(R))$. Now we compare ${\rm ETransO}(Q\perp \mathbb{H}(R))$ with ${\rm EO}_{n+2}(R)$. 
		
		Let $\mathcal{B}=\{e_i\}_{i=1, \ldots, 2r}$ be the basis of $Q$ and choose $w=\sum_{i=1}^{n} w_ie_i \in Q$ for $w_i\in R$. Let $(x,f)\in \mathbb{H}(R)$ such that $f(x)=1$. Then the matrix corresponding to $E_1^w $ with respect to the basis $\{e_1,e_2,\dots,e_{2r},x,f\}$ of $Q\perp\mathbb{H}(R)$ is

		\begin{equation}\label{eq1}
			\begin{pmatrix}
				1 & 0 & 0 & \dots & 0 & 0 & -w_1 \\
				0 & 1 & 0 & \dots & 0 & 0 & -w_2 \\
				\vdots & \vdots & \vdots & \ldots & \vdots & \vdots & \vdots\\
				0 & 0 & 0 & \dots & 1 & 0 & -w_n \\
				& & w \widetilde{\phi}_n & & & 1 &-q(w) \\
				0&0&0&\dots & 0 & 0 & 1\\  \end{pmatrix} = \textstyle{\prod_{i=1}^{n-1}} oe_{i,n+2}(\frac{-w_i}{2})  \cdot oe_{n,n+2}(-w_n) \cdot \textstyle{\prod_{i=1}^{n-1}} oe_{n-i,n+2}(\frac{-w_{n-i}}{2}).\end{equation}

		\vspace{3mm}
		
		\noindent Similarly the matrix corresponding to $E_2^w$ is
		
		\vspace{2mm}

		\begin{equation}\label{eq2}
			\begin{pmatrix}
				1 & 0 & 0 & \dots & 0 & -w_1 & 0 \\
				0 & 1 & 0 & \dots& 0 & -w_2 & 0 \\
				\vdots & \vdots & \vdots & \ldots & \vdots & \vdots & \vdots\\
				0 & 0 & 0 & \dots & 1 & -w_n & 0 \\
				0 & 0 & 0 & \dots & 0 & 1 & 0\\
				& & w \widetilde{\phi}_n & & &-q(w) & 1 \\\end{pmatrix} =\textstyle{\prod_{i=1}^{n-1}} oe_{i,n+1}(\frac{-w_i}{2})\cdot  oe_{n,n+1}(-w_n) \cdot \textstyle{\prod_{i=1}^{n-1}} oe_{n-i,n+1}(\frac{-w_{n-i}}{2}).\end{equation}
		
		\vspace{2mm}
		
		\noindent Therefore, $E_1^{w}, {E_{2}^{w}} \in {\rm EO}_{n+2}(R)$. Since $E_1^{w}$ and $ {E_2^{w}} $ generates the group ${\rm ETransO}(Q\perp \mathbb{H}(R))$, we have ${\rm ETransO}(Q\perp \mathbb{H}(R)) \subseteq {\rm EO}_{n+2}(R)$.
		
		\vspace{2mm}

		To prove the other inclusion, consider the relation $oe_{i,j}(ab)=[oe_{i,k}(a),oe_{k,j}(b)]$. That is, we get $oe_{i,j}(x)=[oe_{i,n+2}(x),oe_{n+2,j}(1)]$. Also we have, $ oe_{i,n+2}(x) = E^{-xe_i}_1$ and $oe_{n+2,j}(1) = oe_{\sigma(j),n+1}(-1) = E^{e_{\sigma(j)}}_{2}$. Therefore $ oe_{i,j}(x)= [E^{-xe_i}_1, E^{e_{\sigma(j)}}_2] \in {\rm ETransO}(Q\perp\mathbb{H}(R))$, which means ${\rm EO}_{n+2}(R) \subseteq {\rm ETransO}(Q\perp\mathbb{H}(R))$.  
	\end{proof}	
	
	{Now we prove the relative version of the equality in Lemma \ref{EOeqEOStd} in a more general setting.
		\begin{lemma}\label{EOeqEOpsi}
			If $(Q,q)$ be a free quadratic space of rank $n=2r$ for $r\geq 1$ and let $ \widetilde{\phi}_n$ be the matrix corresponding to $q$ with respect to some basis $\mathcal{B}$. Then $${\rm EO}_{(R,I)}(Q ,\mathbb{H}(R)) = {\rm EO}_{n+2}(R,I).$$
		\end{lemma}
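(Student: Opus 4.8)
The plan is to deduce the relative statement from the absolute equality of Lemma \ref{EOeqEOStd} by passing to normal closures. First I would record that, by the relative form of the DSER--transvection equality (the relative analogue of Lemma \ref{DSER=ETrans} noted in the Remark following the definition of the relative elementary orthogonal transvections), the relative DSER group and the relative transvection group coincide, i.e. ${\rm EO}_{(R,I)}(Q,\mathbb{H}(R)) = {\rm ETransO}(M,IM)$ with $M = Q\perp\mathbb{H}(R)$. It therefore suffices to prove ${\rm ETransO}(M,IM) = {\rm EO}_{n+2}(R,I)$. Writing $G := {\rm EO}_{n+2}(R) = {\rm ETransO}(M)$ (Lemma \ref{EOeqEOStd}), both sides of this identity are normal closures inside $G$: the left side is the normal closure $N^G$ of the true relative transvection group $N := {\rm ETransO}(IM)$, and the right side is the normal closure of the true relative elementary group ${\rm EO}_{n+2}(I)$. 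Since the normal closure of a subgroup contained in a normal subgroup of $G$ is again contained in that normal subgroup, it is enough to establish the two cross-inclusions ${\rm ETransO}(IM)\subseteq {\rm EO}_{n+2}(R,I)$ and ${\rm EO}_{n+2}(I)\subseteq {\rm ETransO}(M,IM)$.

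For the first inclusion I would note that the matrix identities (\ref{eq1}) and (\ref{eq2}) are valid for any basis $\mathcal B$ of $Q$ whose Gram matrix is $\widetilde{\phi}_n$, as their verification uses only the values of the bilinear form on $\mathcal B$ and not the standardness of the basis; this is exactly the place where the hypothesis is weakened from the standard basis of Lemma \ref{EOeqEOStd} to an arbitrary $\mathcal B$. If $w = \sum_i w_i e_i \in IQ$, then each coordinate $w_i$ lies in $I$, so every factor $oe_{i,n+2}(-w_i/2)$, $oe_{n,n+2}(-w_n)$ in (\ref{eq1}) and the analogous factors $oe_{i,n+1}(-w_i/2)$, $oe_{n,n+1}(-w_n)$ in (\ref{eq2}) have their arguments in $I$. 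Hence the generators $E_1^w$ and $E_2^w$ of ${\rm ETransO}(IM)$ lie in the true relative group ${\rm EO}_{n+2}(I)$, and a fortiori in ${\rm EO}_{n+2}(R,I)$.

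For the second inclusion I would reuse the commutator identity from the proof of Lemma \ref{EOeqEOStd}, namely $oe_{i,j}(x) = [E_1^{-x e_i}, E_2^{e_{\sigma(j)}}]$, now taken with $x \in I$. Here $-x e_i \in IQ$, so $E_1^{-x e_i}$ is a generator of $N = {\rm ETransO}(IM)$, whereas $E_2^{e_{\sigma(j)}} \in G$. Writing $[a,b] = a\,(b a^{-1} b^{-1})$ exhibits the commutator as a product of an element of $N$ and a $G$-conjugate of an element of $N$, so it lies in the normal closure $N^G = {\rm ETransO}(M,IM)$. Thus $oe_{i,j}(x) \in {\rm ETransO}(M,IM)$ for every $x\in I$, which gives ${\rm EO}_{n+2}(I) \subseteq {\rm ETransO}(M,IM)$. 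Combining the two inclusions and taking normal closures in $G$ yields ${\rm ETransO}(M,IM) = {\rm EO}_{n+2}(R,I)$, and hence the assertion.

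The step I expect to require the most care is keeping track of the distinction between the true relative groups and their normal closures. The commutator identity only places $oe_{i,j}(x)$ in the normal closure of the true relative transvection group, not in the true relative group itself, and this is precisely why the lemma is formulated for the relative groups ${\rm EO}_{(R,I)}$ and ${\rm EO}_{n+2}(R,I)$ rather than for the true relative groups ${\rm EO}_{I}$ and ${\rm EO}_{n+2}(I)$. A secondary point, already flagged above, is to confirm that nothing in the computations of Lemma \ref{EOeqEOStd} used more about the basis than its Gram matrix $\widetilde{\phi}_n$, so that the passage to an arbitrary basis $\mathcal B$ is harmless.
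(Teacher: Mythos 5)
Your proposal is correct and follows essentially the same route as the paper: the paper likewise identifies ${\rm EO}_{(R,I)}(Q,\mathbb{H}(R))$ with ${\rm ETransO}(M,IM)$ via the relative DSER--transvection equality, uses the factorizations (\ref{eq1}) and (\ref{eq2}) with $w_i\in I$ for one inclusion, the commutator identity $oe_{i,j}(x)=[E_1^{xe_i},E_2^{-e_{\sigma(j)}}]$ for the other, and then passes to normal closures using normality inside ${\rm EO}_{n+2}(R)={\rm ETransO}(M)$. Your write-up is in fact more explicit than the paper's about the distinction between the true relative groups and their normal closures, and about why the computations of Lemma \ref{EOeqEOStd} depend only on the Gram matrix of $\mathcal{B}$; both points the paper leaves implicit.
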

		
		\begin{proof}
			Define $M=Q\perp\mathbb{H}(R)$. From Eq. \ref{eq1} and Eq.\ref{eq2},  it is clear that if $w \in IQ$ then $w_i \in I$ for all $i$. Thus, ${\rm ETransO}(M,IM) \subseteq {\rm EO}_{n+2}(R,I)$.
			
			\vspace{2mm}
			
			We have $oe_{i,j}(ab)=[oe_{i,k}(a),oe_{k,j}(b)]$. Thus for $x\in I, \; oe_{i,j}(x)=[oe_{i,n+2}(x),oe_{n+2,j}(1)]$. But $ oe_{i,n+2}(x) = E^{xe_i}_1$ and $oe_{n+2,j}(1) = oe_{\sigma(j),n+1}(-1) = E^{-1e_{\sigma(j)}}_2$. Hence, for $x\in I$, we obtain the commutator relation $ oe_{i,j}(x)= [E^{xe_i}_1, E^{-1e_{\sigma(j)}}_2] $. Thus, we get ${\rm EO}_{n+2}(I) \subseteq {\rm ETransO}(M,IM)$. That is,
			$$ {\rm EO}_{n+2}(I) \subseteq {\rm ETransO}(M,IM)={\rm EO}_{(R,I)}(Q,\mathbb{H}(R)) \subseteq {\rm EO}_{R}(Q,\mathbb{H}(R)) = {\rm EO}_{n+2}(R).$$
			Therefore, as a consequence of the normality of ${\rm EO}_{(R,I)}(Q,\mathbb{H}(R)) $ in ${\rm EO}_{(R)}(Q,\mathbb{H}(R))$, we obtain ${\rm EO}_{n+2}(R,I) \subseteq {\rm EO}_{(R,I)}(Q,\mathbb{H}(R)) $.
		\end{proof}
		
		\vspace{2mm}
		Now we consider the odd elementary orthogonal groups and the DSER elementary orthogonal group of odd rank.
		\vspace{2mm}
		
		\begin{lemma}\label{dser-eo-equal}
			If $Q=R $ with the quadratic form $q(z)=z^2$, then $${\rm EO}_R(Q, \mathbb{H}(R)^m) = {\rm EO}_{2m+1}(R).$$
		\end{lemma}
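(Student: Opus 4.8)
The plan is to reduce to the elementary orthogonal transvection group and then match its generators directly with the Fasel generators of ${\rm EO}_{2m+1}(R)$, so that no commutator gymnastics are needed. First I would invoke Lemma~\ref{DSER=ETrans} with $Q=R$ to obtain ${\rm EO}_R(Q,\mathbb{H}(R)^m)={\rm ETransO}(M,\langle\,,\,\rangle)$, where $M=R\perp\mathbb{H}(R)^m$; it therefore suffices to show that ${\rm ETransO}(M)$ coincides with ${\rm EO}_{2m+1}(R)$. Since $q(z)=z^2$, the associated bilinear form is $\langle z,w\rangle=2zw$, so the generator $z_0=1$ of $Q=R$ satisfies $\langle z_0,z_0\rangle=2$. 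Ordering the basis of $M$ as $\{z_0,x_1,f_1,\dots,x_m,f_m\}$, the Gram matrix of $M$ is exactly $\widetilde{\phi}_{2m+1}=2\perp\widetilde{\psi}_m$, the form defining ${\rm EO}_{2m+1}(R)$; this is the ordering I would fix throughout.

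The crucial simplification is that $Q=R$ has rank one, so every $w\in Q$ is a scalar $w=cz_0$ with $c\in R$ and $q(w)=c^2$. I would then compute the matrices of the generating transvections $E_{1i}^{w}$ and $E_{2i}^{w}$ in the above basis, using $\langle s,x_i\rangle=s_i$, $\langle r,f_i\rangle=r_i$ and $\langle z,w\rangle=2cz$. Placing $z_0$ in position $1$ and $x_i,f_i$ in positions $2i,2i+1$, a direct calculation gives
$$E_{1i}^{c}=I+c\bigl(-e_{1,2i+1}+2e_{2i,1}-c\,e_{2i,2i+1}\bigr),\quad E_{2i}^{c}=I+c\bigl(-e_{1,2i}+2e_{2i+1,1}-c\,e_{2i+1,2i}\bigr),$$
for $1\le i\le m$.

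Finally, I would match these with the Fasel generators by the substitution $\lambda=-c$: comparing with $F_i^1(\lambda)=Id+\lambda(e_{1,2i+1}-2e_{2i,1}-\lambda e_{2i,2i+1})$ and $F_i^2(\lambda)=Id+\lambda(e_{1,2i}-2e_{2i+1,1}-\lambda e_{2i+1,2i})$, one reads off $E_{1i}^{c}=F_i^1(-c)$ and $E_{2i}^{c}=F_i^2(-c)$. As $c$ ranges over $R$ so does $-c$, hence the generating set $\{E_{1i}^{c},E_{2i}^{c}\}$ of ${\rm ETransO}(M)$ is literally the generating set $\{F_i^1(z),F_i^2(z)\}$ of ${\rm EO}_{2m+1}(R)$, and the two groups coincide. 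I do not expect a genuine obstacle here: unlike the even case of Lemma~\ref{EOeqEOStd}, where $w$ ranges over a space of rank $n$ and one must decompose each transvection into a product of $oe_{i,j}$'s and recover the reverse inclusion via commutators, here each transvection is already a single Fasel generator. The only real work is the index-and-sign bookkeeping in the matrix computation and fixing the basis order so that the Gram matrix is precisely $2\perp\widetilde{\psi}_m$; once that is set up, the identification of generators is immediate.
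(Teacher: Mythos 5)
Your proposal is correct and takes essentially the same route as the paper: the paper also proves this lemma by directly matching generators with the Fasel generators, via $F_i^1(\lambda)=E_{\alpha_i}(-2\lambda)$ and $F_i^2(\lambda)=E^*_{\beta_i}(-2\lambda)$ where $\alpha_i(1)=x_i$ and $\beta_i(1)=f_i$. Your detour through ${\rm ETransO}(M,\langle\,,\,\rangle)$ and your parameter $-c$ in place of the paper's $-2\lambda$ are only cosmetic differences, since $E_{1i}^{c}=E_{2c\alpha_i}$ under the identification of Lemma \ref{DSER=ETrans1} (the factor $2$ coming from $\langle z,w\rangle=2zw$).
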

		\begin{proof}
			
			Define $\alpha_i:Q \to P$, as $\alpha_i(1)=x_i$ and $\beta_i:Q \to P^*$ as $\beta_i(1)=f_i$,  for $1\leq i\leq m$. Then,  for $\lambda \in R$, the generators of ${\rm EO}_R(Q, \mathbb{H}(R)^m) $ and $ {\rm EO}_{2m+1}(R)$ satisfy the following equations: $$F_i^1(\lambda)= E_{\alpha_{i}} (-2\lambda),$$ $$F_i^2(\lambda) = E^*_{\beta_{i}} (-2\lambda).$$ 
			As a consequence, we get ${\rm EO_{2m+1}(R)} = {\rm EO_{R}(Q, \mathbb{H}(R)^m)}$. 
		\end{proof}
		\noindent Now, we have analogous result for relative elementary orthogonal groups.
		\begin{lemma}\label{dser=eo_odd1}
			If $Q=R $ with the quadratic form $q(z)=z^2$, then $${\rm EO}_{(R,I)}(Q, \mathbb{H}(R)^m) = {\rm EO}_{2m+1}(R,I).$$
		\end{lemma}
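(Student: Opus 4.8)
The plan is to deduce this relative statement directly from the absolute equality established in Lemma~\ref{dser-eo-equal}, exploiting the fact that both relative groups are, by definition, normal closures of their \emph{true} relative analogues inside the \emph{same} ambient group. Indeed, Lemma~\ref{dser-eo-equal} gives not merely ${\rm EO}_R(Q,\mathbb{H}(R)^m) = {\rm EO}_{2m+1}(R)$ but an explicit dictionary between the generators, namely $F_i^1(\lambda) = E_{\alpha_i}(-2\lambda)$ and $F_i^2(\lambda) = E^*_{\beta_i}(-2\lambda)$, where $\alpha_i(1)=x_i$ and $\beta_i(1)=f_i$. So the whole task reduces to matching the two true relative generating sets and then passing to normal closures.

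First I would show the equality of the true relative groups, ${\rm EO}_I(Q,\mathbb{H}(R)^m) = {\rm EO}_{2m+1}(I)$, as subgroups of this common ambient group. Since $Q=R$ has rank one, any $\alpha\in{\rm Hom}(Q,IP)$ is determined by $\alpha(1)=\sum_i \lambda_i x_i$ with $\lambda_i\in I$, so that $\alpha=\sum_i \lambda_i\alpha_i$. Applying the splitting lemma (Lemma~\ref{splitting}) decomposes $E_\alpha$ into a product of the transformations $E_{\lambda_i\alpha_i}$; crucially, because $2$ is invertible each halved map $\tfrac{1}{2}\lambda_i\alpha_i$ still lands in $IP$, so every factor is again a true relative DSER transformation. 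Each such factor equals $F_i^1(-\lambda_i/2)$ with $-\lambda_i/2\in I$, again since $2$ is a unit, and symmetrically for $\beta$ and $F_i^2$. Running the dictionary in the other direction shows every $F_i^1(z),F_i^2(z)$ with $z\in I$ is a true relative DSER generator. Hence the two true relative subgroups coincide.

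Finally I would take normal closures. Because the ambient groups ${\rm EO}_R(Q,\mathbb{H}(R)^m)$ and ${\rm EO}_{2m+1}(R)$ are literally the same group and the two true relative subgroups sitting inside it are literally equal, their normal closures in that group are identical, which is exactly the asserted equality ${\rm EO}_{(R,I)}(Q,\mathbb{H}(R)^m) = {\rm EO}_{2m+1}(R,I)$. The genuine mathematical content is thus entirely carried by the absolute Lemma~\ref{dser-eo-equal}; the only point requiring care is that the scaling factor $-2$ relating the $F$-generators to the $E$-generators preserves membership in $I$, guaranteed by the invertibility of $2$, together with the observation that the splitting lemma keeps all intermediate maps valued in $IP$. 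I do not expect a substantive obstacle beyond this bookkeeping.
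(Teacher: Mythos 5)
Your proposal is correct and takes essentially the same approach as the paper: the paper's proof of Lemma~\ref{dser=eo_odd1} consists precisely of rerunning the generator dictionary $F_i^1(\lambda)=E_{\alpha_i}(-2\lambda)$, $F_i^2(\lambda)=E^*_{\beta_i}(-2\lambda)$ of Lemma~\ref{dser-eo-equal} with $\lambda\in I$, which is exactly what you do. Your write-up is in fact more explicit than the paper's one-line proof, since you spell out the splitting-lemma bookkeeping (needed because a true relative generator $E_\alpha$ has an arbitrary $\alpha\colon Q\to IP$, not just a multiple of some $\alpha_i$) and the final passage to normal closures inside the common ambient group.
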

		\begin{proof}
			The proof follows from a similar argument as in Theorem \ref{dser-eo-equal} for $\lambda\in I$.
		\end{proof}

		\begin{lemma}[\cite{AmbilyRao2024} Lemma~3.6] \label{dser=eo_odd2}
			Let $R$ be a commutative ring in which $2$ is invertible. Let $(Q,q)$ be a diagonalizable quadratic $R$-space of rank $n\geq1$, then $${\rm EO}_{R}(Q\perp \mathbb{H}(R)^{m-1},\mathbb{H}(R)) = {\rm EO}_{R}(Q,\mathbb{H}(R)^m).$$
		\end{lemma}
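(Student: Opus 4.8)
The plan is to prove the two inclusions separately after putting both groups on a common footing. Writing $Q' = Q\perp\mathbb{H}(R)^{m-1}$, both ${\rm EO}_R(Q',\mathbb{H}(R))$ and ${\rm EO}_R(Q,\mathbb{H}(R)^m)$ are subgroups of ${\rm O}_R(M)$ for the single module $M = Q\perp\mathbb{H}(R)^m = Q'\perp\mathbb{H}(R)$; the only difference is which hyperbolic planes are counted as part of the base space and which as part of the hyperbolic summand. By Lemma \ref{DSER=ETrans} I may replace each DSER group by the corresponding elementary transvection group: ${\rm EO}_R(Q',\mathbb{H}(R)) = {\rm ETransO}(M)$ is generated by the transvections $E_1^u, E_2^u$ with $u\in Q'$ (using the single plane $\langle x_m,f_m\rangle$), while ${\rm EO}_R(Q,\mathbb{H}(R)^m) = {\rm ETransO}(M)$ is generated by $E_{1i}^w, E_{2i}^w$ with $w\in Q$ and $1\le i\le m$. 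Throughout I identify $\mathbb{H}(R)^m$ with $\mathbb{H}(R^m)$ as in the Remark, with basis $\{x_i,f_i\}_{i=1}^m$, and I write $Q' = Q\perp\langle x_1,f_1,\dots,x_{m-1},f_{m-1}\rangle$.

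For the inclusion ${\rm EO}_R(Q,\mathbb{H}(R)^m)\subseteq {\rm EO}_R(Q',\mathbb{H}(R))$ I first note that the generators with $i=m$ are literally the transvections $E_1^w, E_2^w$ for $w\in Q\subseteq Q'$, so they already lie in the left-hand group (this is the identification made in Lemma \ref{DSER=ETrans1}). For $i<m$ the vectors $x_i,f_i$ belong to $Q'$, so the transvections $E_1^{x_i}, E_2^{f_i}$ lie in ${\rm EO}_R(Q',\mathbb{H}(R))$; a direct evaluation shows that these are the elementary orthogonal maps mixing the $i$-th and $m$-th hyperbolic planes, and suitable products of them realise the plane-interchange $\tau_i$ of planes $i$ and $m$. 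Conjugating $E_{1m}^w$ and $E_{2m}^w$ by $\tau_i$ then yields $E_{1i}^{\pm w}$ and $E_{2i}^{\pm w}$, which are therefore in the left-hand group. Hence every generator of ${\rm EO}_R(Q,\mathbb{H}(R)^m)$ lies in ${\rm EO}_R(Q',\mathbb{H}(R))$.

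For the reverse inclusion I take a generator $E_1^u$ with $u\in Q'$ and split $u = w+h$ with $w\in Q$ and $h\in \langle x_1,f_1,\dots,x_{m-1},f_{m-1}\rangle$. The map $z'\mapsto\langle z',u\rangle$ is additive in $u$, so Lemma \ref{splitting} gives $E_1^u = E_1^{w/2}\,E_1^{h}\,E_1^{w/2}$ (here $2$ is invertible), and $E_1^{w/2} = E_{1m}^{w/2}$ already lies in ${\rm EO}_R(Q,\mathbb{H}(R)^m)$. It remains to place the plane-mixing part $E_1^{h}$ inside ${\rm EO}_R(Q,\mathbb{H}(R)^m)$; splitting $h$ further reduces this to the maps $E_1^{x_i}, E_1^{f_i}$, which are the pure hyperbolic-plane transvections exchanging the $i$-th and $m$-th planes. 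These arise as commutators of the generators of the right-hand group: a commutator of the shape $[E_{1i}^{w_1}, E_{2m}^{w_2}]$ produces exactly such a plane-mixing map with coefficient a scalar multiple of $\langle w_1, w_2\rangle$. Because $Q$ is a diagonalizable quadratic space, its Gram matrix is $\mathrm{diag}(2a_1,\dots,2a_n)$ and non-degeneracy forces the determinant $2^n\prod_j a_j$ to be a unit, so each $a_j$ is a unit; hence $\langle w_1,w_2\rangle$ ranges over all of $R$ and every coefficient is attained. The same argument applies to the $E_2$-generators, giving ${\rm EO}_R(Q',\mathbb{H}(R))\subseteq {\rm EO}_R(Q,\mathbb{H}(R)^m)$ and therefore equality.

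The main obstacle I anticipate is the honest verification of the two families of identities on which both inclusions rest: that the maps $E_1^{x_i}, E_2^{f_i}$ (and the products realising $\tau_i$) act on $M$ exactly as the claimed plane-mixing and plane-interchanging orthogonal maps, and that the commutators $[E_{1i}^{w_1},E_{2m}^{w_2}]$ equal the intended plane-mixing transvection up to correctable lower-order terms. This is the same kind of Steinberg-type bookkeeping carried out for the $oe_{i,j}$ relations in the proof of Lemma \ref{EOeqEOStd}, and the essential arithmetic input is precisely the diagonalizability hypothesis, which guarantees that the scalars $\langle w_1,w_2\rangle$ occurring as commutator coefficients exhaust $R$.
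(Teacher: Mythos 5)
The paper does not actually prove this lemma: it is quoted verbatim from \cite{AmbilyRao2024}, Lemma~3.6, and used as a black box, so there is no internal proof to compare against. Judged on its own, your double-inclusion argument is sound and is a legitimate self-contained alternative to the citation: passing to the transvection picture via Lemma \ref{DSER=ETrans}, getting the $i<m$ generators of ${\rm EO}_R(Q,\mathbb{H}(R)^m)$ by conjugating $E_1^w,E_2^w$ with a plane interchange $\tau_i$ (which is indeed a product of the isotropic transvections $E_1^{\lambda x_i}, E_1^{\lambda f_i}, E_2^{\lambda x_i}, E_2^{\lambda f_i}$, exactly as signed permutations are products of elementaries), and, conversely, splitting $u=w+h$ by Lemma \ref{splitting} and producing the plane-mixing transvections as commutators of generators of ${\rm EO}_R(Q,\mathbb{H}(R)^m)$. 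I verified the key identities and they hold, e.g.\ with $\lambda=\langle w_1,w_2\rangle$ one gets $[E_{1i}^{w_1},E_{1m}^{w_2}]=E_1^{\lambda x_i}$, $[E_{2i}^{w_1},E_{1m}^{w_2}]=E_1^{\lambda f_i}$, $[E_{1i}^{w_1},E_{2m}^{w_2}]=E_2^{\lambda x_i}$, $[E_{2i}^{w_1},E_{2m}^{w_2}]=E_2^{\lambda f_i}$.

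Two points need repair, neither fatal. First, a bookkeeping slip: the commutator shape you name, $[E_{1i}^{w_1},E_{2m}^{w_2}]$, does \emph{not} produce the $E_1$-type maps $E_1^{\lambda x_i}$ needed to absorb $E_1^h$; it sends $f_i\mapsto f_i+\lambda f_m$ and $x_m\mapsto x_m-\lambda x_i$, i.e.\ it equals $E_2^{\lambda x_i}$. The $E_1$-type plane-mixing maps come from the commutators listed above in which the \emph{second} entry is $E_{1m}^{w_2}$; since your plan invokes all four families anyway, this is a correction of indices, not of strategy. Second, your assertion that non-degeneracy forces $\det(2a_1,\dots,2a_n)$ to be a unit does not follow from the paper's stated definition of non-degenerate (injectivity of $x\mapsto\langle x,\cdot\rangle$): over $R=\mathbb{Z}[1/2]$ the form $q(z)=3z^2$ is non-degenerate in that sense but $6$ is not a unit. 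What saves you is that the DSER formalism itself requires $d_{\langle\,,\,\rangle}:Q\to Q^*$ to be an \emph{isomorphism} (it is inverted in the definition of $\alpha^*$), so for a free $Q$ the Gram matrix is invertible and each diagonal entry is a unit; you should cite that, not the weak non-degeneracy condition. Incidentally, once unimodularity of the form is granted, surjectivity of $(w_1,w_2)\mapsto\langle w_1,w_2\rangle$ holds for \emph{any} free quadratic space, so diagonalizability plays no essential role in your argument beyond making the unit statement transparent.
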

		
		\vspace{2mm}
		Therefore, we can combine the above 3 results as follows.
		\vspace{2mm}
		
		\begin{lemma}\label{dser=eo_odd}
			If $(Q,q)$ is a quadratic space of rank $2m-1$ with the matrix corresponding to $q$ is $2 \perp \widetilde{\psi}_{m-1}$ with respect to some basis $\mathcal{B}$ of $Q$, then $$ {\rm EO}_{R,I}(Q,\mathbb{H}(R))={\rm EO}_{2m+1}(R,I).$$
		\end{lemma}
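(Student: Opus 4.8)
The plan is to derive this statement formally from the three preceding lemmas, using the explicit shape of the Gram matrix $2\perp\widetilde{\psi}_{m-1}$ to recognise $(Q,q)$ as an orthogonal sum of a rank-one piece and hyperbolic planes. First I would note that the rank-one summand with Gram matrix $[2]$ is exactly the quadratic module $(R,q)$ with $q(z)=z^2$: its associated bilinear form is $\langle z,w\rangle=2zw$, so $q(z)=\tfrac12\langle z,z\rangle=z^2$. Likewise $\widetilde{\psi}_{m-1}$ is precisely the Gram matrix of $\mathbb{H}(R)^{m-1}$ in the basis $\{x_1,f_1,\dots,x_{m-1},f_{m-1}\}$. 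Consequently $(Q,q)\cong (R,z^2)\perp\mathbb{H}(R)^{m-1}$, and therefore
$$M=Q\perp\mathbb{H}(R)\;\cong\;(R,z^2)\perp\mathbb{H}(R)^{m-1}\perp\mathbb{H}(R)\;=\;(R,z^2)\perp\mathbb{H}(R)^{m}.$$

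Next I would feed this decomposition into Lemma~\ref{dser=eo_odd2}, instantiated with its ``$Q$'' equal to the diagonalizable rank-one space $(R,z^2)$ and with $m$ hyperbolic planes. Its absolute form then reads
$${\rm EO}_{R}\bigl((R,z^2)\perp\mathbb{H}(R)^{m-1},\mathbb{H}(R)\bigr)={\rm EO}_{R}\bigl((R,z^2),\mathbb{H}(R)^{m}\bigr),$$
whose left-hand side is ${\rm EO}_{R}(Q,\mathbb{H}(R))$ under the identification above. Combining with Lemma~\ref{dser-eo-equal} applied to $(R,z^2)$ identifies this common absolute group with the standard odd elementary orthogonal group ${\rm EO}_{2m+1}(R)$, which is the natural ambient group in which to take normal closures.

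That last point is where I expect the real work to lie. Lemma~\ref{dser=eo_odd2} is stated only for the absolute elementary groups, so I would reprove it at the level of the \emph{true} relative groups. The regrouping of $M$ does not change the ambient module, only the chosen splitting $Q'\perp\mathbb{H}(P')$; one then checks, via the splitting Lemma~\ref{splitting} and the explicit matrix formulas for $E_{\alpha}$ and $E^{*}_{\beta}$ recorded in Section~7, that a generator with $\alpha:Q'\to IP'$ (resp. $\beta:Q'\to I{P'}^{*}$) for one splitting is a product of generators with $I$-valued homomorphisms for the other splitting. Since this correspondence preserves membership of all coefficients in $I$, it identifies the true relative groups ${\rm EO}_{I}(Q,\mathbb{H}(R))$ and ${\rm EO}_{I}((R,z^2),\mathbb{H}(R)^{m})$; as both sit inside the common absolute group ${\rm EO}_{R}(Q,\mathbb{H}(R))={\rm EO}_{R}((R,z^2),\mathbb{H}(R)^{m})={\rm EO}_{2m+1}(R)$, their normal closures coincide, giving ${\rm EO}_{(R,I)}(Q,\mathbb{H}(R))={\rm EO}_{(R,I)}((R,z^2),\mathbb{H}(R)^{m})$.

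Finally I would assemble the chain
$${\rm EO}_{(R,I)}(Q,\mathbb{H}(R))={\rm EO}_{(R,I)}\bigl((R,z^2),\mathbb{H}(R)^{m}\bigr)={\rm EO}_{2m+1}(R,I),$$
the first equality being the relative refinement of Lemma~\ref{dser=eo_odd2} discussed above and the second being Lemma~\ref{dser=eo_odd1}. The only nonroutine step is that relative refinement; the identification of the Gram matrix with the stated orthogonal decomposition and the rank bookkeeping $\mathrm{rank}\,M=(2m-1)+2=2m+1$ are immediate.
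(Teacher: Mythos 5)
Your proposal is correct and follows essentially the same route as the paper, whose entire proof of this lemma is the one-line citation of Lemma~\ref{dser=eo_odd1} and Lemma~\ref{dser=eo_odd2} (after the same identification of $2\perp\widetilde{\psi}_{m-1}$ with $(R,z^2)\perp\mathbb{H}(R)^{m-1}$ that you make explicit). You are in fact more careful than the paper: Lemma~\ref{dser=eo_odd2} is stated only for the absolute groups, and the relative refinement you flag and sketch (the true relative generators correspond under regrouping of the splitting with all coefficients staying in $I$, hence the normal closures in the common ambient group ${\rm EO}_{2m+1}(R)$ coincide) is precisely the step the paper's proof passes over in silence.
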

		\begin{proof}
			The proof follows from Theorem \ref{dser=eo_odd1} and Theorem \ref{dser=eo_odd2}. 
		\end{proof}
		
		\section{Comparison of ESD transvection group and elementary orthogonal group.}
		In this section, we compare the $ESD$ transvection group on a free quadratic $R$-space with the elementary orthogonal group.
		Let $u=(u_1,u_1',u_1'',u_2',u_2'',\cdots,u_n',u_n'') \in R^{2n+1}$ be an isotropic unimodular vector and $v=(v_1,v_1',v_1'',v_2',v_2'',\cdots,v_n',v_n'') \in R^{2n+1} $ with $\langle u,v \rangle =0$ and $r:=q(v)$. Then the matrix corresponding to $\sigma_{u,v}$ with the standard quadratic form is as follows:
		
		$$\left( \begin{smallmatrix}
			1-2ru_1^2&u_1v_1''-v_1u_1''-ru_1u_1''&u_1v_1'-v_1u_1'-ru_1u_1'&\cdots&u_1v_n''-v_1u_n''-ru_1u_n''&u_1v_n'-v_1u_n'-ru_1u_n'\\
			2(u_1'v_1-v_1'u_1-ru_1'u_1)&1+u_1'v_1''-v_1'u_1''-ru_1'u_1''&-ru_1'^2&\cdots&u_1'v_n''-v_1'u_n''-ru_1'u_n''&u_1'v_n'-v_1'u_n'-ru_1'u_n'\\
			2(u_1''v_1-v_1''u_1-ru_1''u_1)&ru_1''^2&1+u_1''v_1'-v_1''u_1'-ru_1''u_1'&\cdots&u_1''v_n''-v_1''u_n''-ru_1''u_n''&u_1''v_n'-v_1''u_n'-ru_1''u_n'\\
			\vdots& \vdots&\vdots&\ddots&\vdots&\vdots\\
			2(u_n'v_1-v_n'u_1-ru_n'u_1)&u_n'v_1''-v_n'u_1''-ru_n'u_1''&u_n'v_1''-v_n'u_1''-ru_n'u_1''&\cdots&1+u_n'v_n''-v_n'u_n''-ru_n'u_n''&1-ru_n'^2\\
			2(u_n''v_1-v_n''u_1-ru_n''u_1)&u_n''v_1''-v_n''u_1''-ru_n''u_1''&u_n''v_n'-v_n''u_1'-ru_n''u_1'&\cdots&-ru_n''^2&1+u_n''v_n'-v_n''u_n'-ru_n''u_n'\\
		\end{smallmatrix}\right).$$

		\vspace{4mm}
		
		Consider a quadratic $R$-space $M$ of rank $2n$. The following results are obtained by A. A. Suslin and V. I. Kope\u{\i}ko in \cite{SuslinKopeiko77}. These results are used in proving the stability theorems for orthogonal group.

		\begin{lemma}[\cite{SuslinKopeiko77}, Proposition 2.10]\label{TransEven} 
			Let $M$ be a free quadratic $R$-space of rank $2n$, for $n\geq 3$. Let $u,v \in M$ such that $q(u)=\langle u,v \rangle =0,$ and $u$ is unimodular. Then $\sigma_{u,v} \in {\rm EO}_{2n}(R)$.
		\end{lemma}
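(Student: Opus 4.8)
The plan is to reduce the general ESD transvection $\sigma_{u,v}$ to one whose isotropic vector is a standard basis element, where the transvection becomes a visibly elementary matrix. The two facts that make this work are a conjugation rule and an additivity (homomorphism) property of ESD transvections, both valid over any commutative ring.

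First I would record the conjugation identity: for any $\tau \in {\rm O}_{2n}(R)$ one has $\tau\,\sigma_{u,v}\,\tau^{-1} = \sigma_{\tau u,\tau v}$, which follows immediately from $\langle \tau a, \tau b\rangle = \langle a,b\rangle$ and $q(\tau v)=q(v)$. In particular this holds for $\tau \in {\rm EO}_{2n}(R)$, and it preserves the hypotheses $q(\tau u)=0$, $\langle \tau u,\tau v\rangle=0$, and $\tau u$ unimodular. Since ${\rm EO}_{2n}(R)$ is a group, it therefore suffices to prove $\sigma_{u,v}\in{\rm EO}_{2n}(R)$ after replacing $(u,v)$ by $(\varepsilon u,\varepsilon v)$ for a convenient $\varepsilon\in{\rm EO}_{2n}(R)$.

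Next I would use that, for $n\geq 3$, the elementary orthogonal group acts transitively on the unimodular isotropic vectors of the free quadratic space of rank $2n$ with form $\widetilde{\psi}_n$; this lets me choose $\varepsilon$ with $\varepsilon u = e_1$, the first standard basis vector, which is isotropic and paired with $e_2$. Writing $w:=\varepsilon v$, the relation $\langle e_1,w\rangle=0$ forces the $e_2$-coordinate of $w$ to vanish, so $w = w_1 e_1 + \sum_{j\geq 3} w_j e_j$. Then I would invoke the additivity $\sigma_{u,v_1+v_2}=\sigma_{u,v_1}\,\sigma_{u,v_2}$ for $v_1,v_2\in u^\perp$, which is a short computation using $q(v_1+v_2)=q(v_1)+q(v_2)+\langle v_1,v_2\rangle$ together with $\sigma_{u,v}(u)=u$, to split $\sigma_{e_1,w}=\prod_j \sigma_{e_1,w_j e_j}$. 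The factor with $j=1$ is the identity because $v$ is then a multiple of the isotropic $e_1$; for each $j\geq 3$ every basis vector is isotropic, so $q(w_je_j)=0$ and $\langle e_1,e_j\rangle=0$, and a direct comparison of the two actions on the basis shows $\sigma_{e_1,w_je_j}=oe_{1,\sigma(j)}(w_j)$, a single elementary generator. Hence $\sigma_{e_1,w}\in{\rm EO}_{2n}(R)$, and conjugating back by $\varepsilon^{-1}$ gives $\sigma_{u,v}\in{\rm EO}_{2n}(R)$.

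The main obstacle is the transitivity statement in the middle step: producing $\varepsilon\in{\rm EO}_{2n}(R)$ with $\varepsilon u=e_1$. This is precisely where the hypothesis $n\geq 3$ enters, and it rests on the surjective-stability and transitivity results for the elementary orthogonal group acting on unimodular isotropic vectors in the stable range. Everything after the reduction is a finite, explicit identification of ESD transvections at a basis vector with the generators $oe_{i,j}$, so the difficulty is concentrated entirely in this reduction.
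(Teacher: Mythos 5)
Your conjugation identity $\tau\sigma_{u,v}\tau^{-1}=\sigma_{\tau u,\tau v}$, the additivity $\sigma_{u,v_1+v_2}=\sigma_{u,v_1}\sigma_{u,v_2}$ for $v_1,v_2\in u^{\perp}$, and the identification $\sigma_{e_1,w_je_j}=oe_{1,\sigma(j)}(w_j)$ are all correct computations. The genuine gap is the middle step: for a \emph{fixed} $n\geq 3$ and an \emph{arbitrary} commutative ring $R$, there is no $\varepsilon\in{\rm EO}_{2n}(R)$ (nor even in ${\rm O}_{2n}(R)$) carrying a given unimodular isotropic $u$ to $e_1$ in general. If $\varepsilon u=e_1$, then $\varepsilon$ induces an isometry $u^{\perp}/Ru\cong e_1^{\perp}/Re_1$, i.e.\ after completing $u$ to a hyperbolic pair (which \emph{is} always possible: take $w$ with $\langle u,w\rangle=1$ and replace $w$ by $w-q(w)u$), the orthogonal complement of that hyperbolic plane must be isometric to the standard form $\widetilde{\psi}_{n-1}$. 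That is a Witt cancellation statement, and cancellation of a hyperbolic plane fails over general commutative rings when the rank is fixed and $\dim R$ is large; the transitivity theorems you appeal to require a stable-range hypothesis of the form ``$n$ large relative to ${\rm sr}(R)$,'' whereas the lemma asserts the conclusion for every commutative $R$ with $2$ invertible once $n\geq 3$. So your argument proves a strictly weaker statement than the one claimed.

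For comparison: the paper gives no internal proof of this lemma — it is quoted from Suslin--Kope\u{\i}ko (Proposition 2.10), whose argument works directly with the coordinates of an arbitrary unimodular $u$, decomposing $\sigma_{u,v}$ (via additivity in $v$) into explicit block factors that lie in ${\rm EO}_{2n}(R)$ by their Lemmas 2.1 and 2.2 (matrices $\left(\begin{smallmatrix}A&0\\0&(A^t)^{-1}\end{smallmatrix}\right)$ with $A\in{\rm E}_n(R)$, and unipotent blocks built from alternating matrices); this is exactly the strategy the present paper reproduces in the odd-rank case in Lemmas \ref{oddEO_1}, \ref{oddEO_2} and \ref{oddEO0u'u''andv1v'v''}, where the factorization $\sigma_{u,v}=\sigma_1\sigma_2\sigma_1\sigma_3\sigma_4$ is exhibited without ever normalizing $u$. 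Tellingly, where the paper \emph{does} use your reduction — completing $u$ to a hyperbolic pair inside a standard basis and transporting the problem there (Theorem \ref{TransOinEO}) — it assumes $R$ semilocal, precisely so that Roy's cancellation theorem (Theorem 8.1 of \cite{Roy1968}) licenses the change of basis that your proposal needs over an arbitrary ring. To repair your proof you would either have to restrict the lemma to semilocal (or stable-range-bounded) rings, or replace the transitivity step by the coordinate-level factorization of Suslin--Kope\u{\i}ko.
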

		
		Now we need to consider the case when the non-degenerate quadratic module $Q$ have odd rank. To begin with, we state some results on the odd elementary orthogonal group similar to the results proved for even elementary orthogonal groups in \cite{SuslinKopeiko77}.
		
		\begin{lemma}\label{oddEO_1}
			
			If $A =(a_{ij}) \in {\rm GL}_n(I)$, then 
			$$\tilde{A}=\begin{pmatrix}
				1&0&0&0&0&\ldots&0&0\\
				0&a_{11}&0&a_{12}&0& \ldots &a_{1n}&0\\
				0&0&(A^T)^{-1}_{11}&0&(A^T)^{-1}_{12}&\ldots &0&(A^T)^{-1}_{1n}\\
				0&a_{21}&0&a_{22}&0& \ldots &a_{2n}&0\\
				0&0&(A^T)^{-1}_{21}&0&(A^T)^{-1}_{22}&\ldots &0&(A^T)^{-1}_{2n}\\
				\vdots&\vdots&\vdots&\vdots& \ddots &\vdots&\vdots\\
				0&a_{n1}&0&a_{n2}&0& \ldots &a_{nn}&0\\
				0&0&(A^T)^{-1}_{n1}&0&(A^T)^{-1}_{n2}&\ldots &0&(A^T)^{-1}_{nn}\\
			\end{pmatrix} \in {\rm O}_{2n+1}(I).$$
			\vspace{2mm}
			In particular if $A\in {\rm E}_n(I)$, then the corresponding $\tilde{A} \in {\rm EO}_{2n+1}(I)$.
		\end{lemma}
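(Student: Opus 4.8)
The plan is to realise $\tilde{A}$ as the image of $A$ under the standard embedding of ${\rm GL}_n$ into the orthogonal group of the form $\widetilde{\phi}_{2n+1} = 2 \perp \widetilde{\psi}_n$. Writing the $2n+1$ coordinates as the anisotropic line (position $1$) together with the $n$ hyperbolic planes $(2i,2i+1)$, the matrix $\tilde{A}$ fixes position $1$ and acts on the hyperbolic part as the block matrix applying $A$ to the ``even'' coordinates $\{2,4,\dots,2n\}$ and $(A^T)^{-1}$ to the ``odd'' coordinates $\{3,5,\dots,2n+1\}$. First I would record that $A \mapsto \tilde{A}$ is a group homomorphism: the even block of $\widetilde{AB}$ is $AB$ and its odd block is $((AB)^T)^{-1} = (A^T)^{-1}(B^T)^{-1}$, so the interleaved blocks compose correctly while position $1$ stays fixed throughout.

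Next I would verify $\tilde{A} \in {\rm O}_{2n+1}(R)$ by a direct block computation of $\tilde{A}^T \widetilde{\phi}_{2n+1}\, \tilde{A}$. Since $\tilde{A}$ decouples the anisotropic line (which is $\widetilde{\phi}$-orthogonal to the hyperbolic part), that line contributes $1\cdot 2\cdot 1 = 2$, while on the hyperbolic part the pairing between the $A$-coordinates and the $(A^T)^{-1}$-coordinates is preserved precisely because $A^T(A^T)^{-1} = I_n$; this is the familiar fact that $\mathrm{diag}(A,(A^T)^{-1})$ is orthogonal for the hyperbolic form. For the relative statement, if $A \in {\rm GL}_n(I)$, i.e.\ $A \equiv I_n \pmod{I}$, then $A^T \equiv I_n$ and hence $(A^T)^{-1} \equiv I_n \pmod{I}$, so every block of $\tilde{A}$ is congruent to the identity; therefore $\tilde{A} \equiv I_{2n+1} \pmod I$, giving $\tilde{A} \in {\rm O}_{2n+1}(I)$.

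For the second assertion I would invoke the homomorphism property to reduce to the elementary generators: if $A \in {\rm E}_n(I)$ then $A$ is a product of $e_{ij}(x)$ with $x \in I$, so it suffices to locate $\widetilde{e_{ij}(x)}$ inside ${\rm EO}_{2n+1}(I)$. A short computation, using that the transpose-inverse of the linear elementary matrix $e_{ij}(x)$ equals $e_{ji}(-x)$, identifies the image with a Fasel long-root generator,
\[
\widetilde{e_{ij}(x)} = I_{2n+1} + x\,(e_{2i,2j} - e_{2j+1,2i+1}) = F_{i,j}^3(x),
\]
and the stated relation $F_{i,j}^3(x) = [F_i^2(x), F_j^2(1)]$ places it in the elementary orthogonal group.

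The main obstacle is the relative refinement of this last point. The commutator expression $F_{i,j}^3(x) = [F_i^2(x), F_j^2(1)]$ only shows, a priori, that $F_{i,j}^3(x)$ lies in the \emph{normal closure} ${\rm EO}_{2n+1}(R,I)$, because the factor $F_j^2(1)$ carries the unit argument $1 \notin I$; whereas the true relative group ${\rm EO}_{2n+1}(I)$ is generated only by the short-root elements $F_k^1, F_k^2$ with arguments in $I$. Closing this gap amounts to the relative analogue of the equality ``group generated by $(1)$--$(5)$ equals the group generated by the $F^1,F^2$'': one must produce $F_{i,j}^3(x)$, for $x \in I$, as an honest word in the $I$-argument short-root generators, which I would attempt through the relative Chevalley-type commutator calculus for the root system $B_n$ (together with the standing hypothesis that $2$ is invertible). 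This bookkeeping, rather than the orthogonality verification, is where the real work lies.
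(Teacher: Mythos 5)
Your treatment of the first assertion (homomorphism property of $A\mapsto\tilde A$, block verification of $\tilde A^{T}\widetilde\phi_{2n+1}\tilde A=\widetilde\phi_{2n+1}$, congruence $\tilde A\equiv I_{2n+1} \bmod I$) is correct, and your reduction of the second assertion to the generators $\widetilde{e_{ij}(x)}=F_{i,j}^3(x)$ is exactly the content of Lemma 2.1 of \cite{SuslinKopeiko77}, which is all the paper itself invokes (its proof is a one-line citation). One detail: the relation you quote from the paper is misprinted there; a direct computation gives $[F_i^2(z),F_j^2(1)]=I_{2n+1}+2z(e_{2j+1,2i}-e_{2i+1,2j})$, an $F^5$-type element, and it is $[F_i^1(-x/2),F_j^2(1)]$ that equals $F_{i,j}^3(x)$. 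Since $2$ is invertible this does not affect the substance of the absolute case $I=R$.

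The difficulty you flag at the end, however, is not a bookkeeping gap that relative $B_n$ commutator calculus can close: with the paper's own definition of the true relative group ${\rm EO}_{2n+1}(I)=\langle F_k^1(z),F_k^2(z):z\in I\rangle$, the second assertion is in fact \emph{false}, so the step you leave open admits no proof. To see this, reduce modulo $I^2$: each generator $F_k^1(z)$, $F_k^2(z)$ with $z\in I$ becomes $I_{2n+1}+zN$ with $N\in\{e_{1,2k+1}-2e_{2k,1},\,e_{1,2k}-2e_{2k+1,1}\}$ and $z$ lying in the square-zero ideal $I/I^2$, so \emph{every} product of such generators and their inverses equals $I_{2n+1}+\sum_s z_sN_s$, a matrix whose nonidentity entries all lie in row $1$ or column $1$. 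Since $F_{i,j}^3(x)-I_{2n+1}$ is supported at the positions $(2i,2j)$ and $(2j+1,2i+1)$, membership of $F_{i,j}^3(x)$ in ${\rm EO}_{2n+1}(I)$ forces $x\in I^2$. Thus for $R=\mathbb{Z}[1/2]$, $I=3R$, $x=3$, one has $e_{12}(3)\in{\rm E}_n(I)$ but $\widetilde{e_{12}(3)}=F_{1,2}^3(3)\notin{\rm EO}_{2n+1}(I)$; this is a defect of the lemma as printed, not of your argument. What your commutator expression \emph{does} prove is the correct statement: since $[F_i^1(-x/2),F_j^2(1)]$ is the product of an element of ${\rm EO}_{2n+1}(I)$ with an ${\rm EO}_{2n+1}(R)$-conjugate of its inverse, one gets $\tilde A\in{\rm EO}_{2n+1}(R,I)$, the normal closure. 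That weaker conclusion is the form in which the relative assertion should be read (it is the normal-closure convention under which the cited lemma of \cite{SuslinKopeiko77} holds), and it is all the paper uses downstream: Lemma \ref{oddEO0u'u''andv1v'v''} applies the present lemma only with $I=R$, and the later relative results need only the normal closure. So the repair is not to push harder on the commutator calculus but to replace the target group ${\rm EO}_{2n+1}(I)$ by ${\rm EO}_{2n+1}(R,I)$.
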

		\begin{proof}Both the assertions are obvious using Lemma 2.1 in \cite{SuslinKopeiko77}.\end{proof}
		
		\begin{lemma}\label{oddEO_2}
			Let $A =(a_{ij}) \in {\rm Alt_n(I)}$. Then the following two types of matrices 
			\begin{multicols}{2}
				\begin{enumerate}[label=(\roman*)]
					\item $\begin{pmatrix}
						1&0&0&0&0&\ldots&0&0\\
						0&1 & a_{11} &0&a_{12}& \ldots &0&a_{1n}\\ 
						0&0&1&0&0&\ldots &0&0\\
						0&0 & a_{21} &1&a_{22}& \ldots &0&a_{2n}\\ 
						0&0&0&0&0&\ldots &0&0\\
						\vdots&\vdots&\vdots&\vdots&\vdots&\ddots&\vdots&\vdots\\
						0&0 & a_{n1} &0&a_{n2}& \ldots &1&a_{nn}\\ 0&0&0&0&0&\ldots &0&1\\
					\end{pmatrix} $
					\item $\begin{pmatrix}
						1&0&0&0&0&\ldots &0&0\\
						0&1&0&0&0&\ldots &0&0\\
						0&a_{11} &1&a_{12}& 0&\ldots &a_{1n}&0\\ 
						0&0&0&0&0&\ldots &0&0\\
						0&a_{21} &0&a_{22}&1& \ldots &a_{2n}&0\\ 
						\vdots&\vdots&\vdots&\vdots&\vdots&\ddots&\vdots&\vdots\\
						0&0&0&0&0&\ldots &1&0\\
						0&a_{n1} &0&a_{n2}& 0&\ldots &a_{nn}&1\\
					\end{pmatrix}$
				\end{enumerate}
			\end{multicols} 
			belong to ${\rm EO}_{2n+1}(I)$.  
		\end{lemma}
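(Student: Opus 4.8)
The plan is to recognize each of the two matrices as an explicit product of the Fasel generators $F_{i,j}^4$ and $F_{i,j}^5$, and then to read off membership in ${\rm EO}_{2n+1}(I)$ from the commutator presentations recorded in the preliminaries. First I would put the two matrices in closed form. Reading off the entries (and using that the anisotropic coordinate $1$ is fixed, so the first row and column stay trivial), the matrix in (i) is $I_{2n+1}+\sum_{i,j=1}^{n}a_{ij}\,e_{2i,\,2j+1}$ and the matrix in (ii) is $I_{2n+1}+\sum_{i,j=1}^{n}a_{ij}\,e_{2i+1,\,2j}$. Since $A\in{\rm Alt}_n(I)$ we have $a_{ii}=0$ and $a_{ji}=-a_{ij}$, so only the genuinely off-diagonal entries survive, and these are exactly the entry patterns of the generators $F_{i,j}^4(z)=Id+z(e_{2i,2j+1}-e_{2j,2i+1})$ and $F_{i,j}^5(z)=Id+z(e_{2i+1,2j}-e_{2j+1,2i})$.

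The key observation is that the nilpotent parts involved multiply to zero, so the relevant factors commute and the product telescopes to a plain sum. For type (i), setting $N_{ij}:=e_{2i,2j+1}-e_{2j,2i+1}$ one checks $N_{ij}N_{kl}=0$ for all indices, because every product $e_{2i,2j+1}e_{2k,2l+1}$ would require matching the odd column index $2j+1$ with the even row index $2k$, which is impossible. Hence $F_{i,j}^4(a_{ij})=I+a_{ij}N_{ij}$ commute pairwise, and I would establish
\[
\prod_{1\le i<j\le n} F_{i,j}^4(a_{ij}) = I_{2n+1} + \sum_{i<j} a_{ij}\,e_{2i,2j+1} - \sum_{i<j} a_{ij}\,e_{2j,2i+1}.
\]
Reindexing the second sum by swapping the roles of $i$ and $j$ and invoking $a_{ji}=-a_{ij}$ converts $-\sum_{i<j}a_{ij}e_{2j,2i+1}$ into $\sum_{i>j}a_{ij}e_{2i,2j+1}$, so the product equals $I_{2n+1}+\sum_{i,j}a_{ij}e_{2i,2j+1}$, which is precisely the matrix in (i). The verbatim argument with the factors $F_{i,j}^5(a_{ij})$ reproduces the matrix in (ii).

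Finally I would deduce membership from the relations $F_{i,j}^4(z)=[F_i^1(z),F_j^1(1)]$ and $F_{i,j}^5(z)=[F_i^1(z),F_j^2(1)]$ of the preliminaries: each parameter $a_{ij}$ lies in $I$, so every factor, and hence the whole product, lands in the odd elementary orthogonal group at level $I$, as claimed. The routine part is the index bookkeeping in the displayed identity; the point demanding genuine care, and the step I expect to be the main obstacle, is ensuring these products sit in the \emph{true} relative group ${\rm EO}_{2n+1}(I)$ rather than merely in its normal closure, since the commutator expressions above carry a unit parameter. This is exactly the odd-rank counterpart of the even-rank computation of Suslin--Kope\u{\i}ko in \cite{SuslinKopeiko77}, where the short-root commutator relations of the system $B_n$ take the place of their long-root calculation, and where Lemma~\ref{oddEO_1} supplies the Levi (general-linear) conjugations needed to absorb the parameters back into $I$.
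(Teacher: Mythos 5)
Your computational core is correct, and it is in fact the explicit content behind the paper's proof, which consists of a single citation of Lemma~2.2 of \cite{SuslinKopeiko77}: the two matrices are $I_{2n+1}+\sum_{i,j}a_{ij}e_{2i,2j+1}$ and $I_{2n+1}+\sum_{i,j}a_{ij}e_{2i+1,2j}$, the nilpotent parts $e_{2i,2j+1}-e_{2j,2i+1}$ (resp.\ $e_{2i+1,2j}-e_{2j+1,2i}$) annihilate one another since an odd column index can never match an even row index, and the telescoping products $\prod_{i<j}F^4_{i,j}(a_{ij})$, $\prod_{i<j}F^5_{i,j}(a_{ij})$ reproduce the two matrices after the reindexing that uses $a_{ji}=-a_{ij}$. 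So far your argument is the odd-rank transcription of the Suslin--Kope\u{\i}ko computation, which is exactly what the paper intends.

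The genuine gap is the final membership step, and it is not merely a point ``demanding genuine care'': with the tools you invoke it cannot be closed, because under the paper's literal definition of the true relative group the statement is false. The relations $F^4_{i,j}(z)=[F^1_i(z),F^1_j(1)]$ and $F^5_{i,j}(z)=[F^1_i(z),F^2_j(1)]$ carry the unit parameter $1\notin I$, so for $z\in I$ they exhibit each factor only as an element of the normal closure ${\rm EO}_{2n+1}(R,I)$, and no conjugation trick via Lemma~\ref{oddEO_1} can do better. Indeed, the paper defines ${\rm EO}_{2n+1}(I)$ to be generated by $F^1_i(z),F^2_i(z)$ with $z\in I$ alone; each such generator (and its inverse) is congruent modulo $I^2$ to $I_{2n+1}$ plus a matrix supported on the first row and first column, and any product of matrices $I+N_k$ with all entries of $N_k$ in $I$ is congruent to $I+\sum_k N_k$ modulo $I^2$. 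Hence every element of this group is, modulo $I^2$, the identity plus a matrix supported on the first row and column, whereas the matrices of the lemma carry their entries $a_{ij}$ in positions $(2i,2j+1)$, resp.\ $(2i+1,2j)$. So they belong to that group only if every $a_{ij}\in I^2$, and the lemma fails whenever $I\neq I^2$. What your factorization does prove, and what the paper's later applications (e.g.\ Theorem~\ref{TransEO}) actually use, is membership in ${\rm EO}_{2n+1}(R,I)$ --- equivalently, membership in the level-$I$ group once its generating set is enlarged to include the long-root elements $F^3_{i,j}(z),F^4_{i,j}(z),F^5_{i,j}(z)$ for $z\in I$, which is how the even-rank group ${\rm EO}_{2n}(I)$ is defined and why the cited even-rank lemma is unproblematic. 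This defect is inherited from the statement (the paper's one-line proof glosses over the same point), not introduced by your argument; but a complete write-up must either weaken the conclusion to ${\rm EO}_{2n+1}(R,I)$ or adjust the definition of ${\rm EO}_{2n+1}(I)$, rather than hope the unit parameters can be absorbed.
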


		\begin{proof}
			This Lemma follows from Lemma 2.2 in \cite{SuslinKopeiko77}.
		\end{proof}

		\begin{lemma}
			Suppose $n\geq 3$. Let $u=(0,u_1',u_1'',u_2',u_2'',\ldots,u_n',u_n'')\in R^{2n+1}$ be an isotropic unimodular vector and $v=(0,v_1',v_1'',v_2',v_2'',\ldots,v_n',v_n'') \in R^{2n+1} $ with $\langle u,v \rangle =0$. Then $\sigma_{u,v} \in {\rm EO}_{2n+1}(R)$.
		\end{lemma}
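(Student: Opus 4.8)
The plan is to reduce the odd-rank statement to the even-rank result of Lemma~\ref{TransEven} by exploiting that both $u$ and $v$ have vanishing first coordinate, so that $\sigma_{u,v}$ lives entirely in the hyperbolic summand. Write $M=R^{2n+1}=Re_1\perp\mathbb{H}(R)^n$, where $e_1$ is the anisotropic basis vector with $\langle e_1,e_1\rangle=2$ and the hyperbolic part $\mathbb{H}(R)^n\cong R^{2n}$ is spanned by $e_2,\dots,e_{2n+1}$ with Gram matrix $\widetilde{\psi}_n$. Since $e_1$ is orthogonal to $e_2,\dots,e_{2n+1}$ and both $u,v$ have first coordinate $0$, we have $\langle u,e_1\rangle=\langle v,e_1\rangle=0$, whence $\sigma_{u,v}(e_1)=e_1$; and for any $x$ in the hyperbolic part the scalars $\langle u,x\rangle$, $\langle v,x\rangle$ involve only hyperbolic coordinates, so $\sigma_{u,v}$ preserves $\mathbb{H}(R)^n$ and restricts there to the ESD transvection $\sigma_{\bar u,\bar v}$, where $\bar u,\bar v$ denote the images of $u,v$ in $R^{2n}$.

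First I would verify that $\bar u,\bar v$ meet the hypotheses of Lemma~\ref{TransEven}. Because the deleted coordinate is $0$, we get $q(\bar u)=q(u)=0$ and $\langle\bar u,\bar v\rangle=\langle u,v\rangle=0$, and the unimodularity of $u$ together with $u_1=0$ forces the remaining coordinates to generate the unit ideal, so $\bar u$ is unimodular in $R^{2n}$. Since $n\geq 3$, Lemma~\ref{TransEven} then yields $\sigma_{\bar u,\bar v}\in{\rm EO}_{2n}(R)$.

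It remains to transport this membership along the embedding $R^{2n}\hookrightarrow R^{2n+1}$ that fixes $e_1$ and sends the $k$-th basis vector to $e_{k+1}$; that is, I must show the image of ${\rm EO}_{2n}(R)$ under this inclusion lies in ${\rm EO}_{2n+1}(R)$. This is a generator check. A generator $oe_{i,j}(z)=I+z\,e_{i,j}-z\,e_{\sigma(j),\sigma(i)}$ of ${\rm EO}_{2n}(R)$, after the shift $k\mapsto k+1$, carries the hyperbolic pairs $(1,2),(3,4),\dots$ of $R^{2n}$ onto the pairs $(2,3),(4,5),\dots,(2n,2n+1)$ of $R^{2n+1}$, which are exactly the pairs occurring in the Fasel generators. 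Since $i\neq j,\sigma(j)$ excludes same-pair indices, the shifted indices lie in distinct pairs, and running through the parities of the two indices identifies each shifted generator with one of $F^3_{i,j}(z)$, $F^4_{i,j}(z)$, $F^5_{i,j}(z)$; the leftover ``odd--odd'' case reduces to the ``even--even'' one via the identity $oe_{i,j}(z)=oe_{\sigma(j),\sigma(i)}(-z)$, which follows from $\sigma^2=\mathrm{id}$. As $F^3,F^4,F^5$ are commutators of the Fasel generators $F^1,F^2$ and hence lie in ${\rm EO}_{2n+1}(R)$, the embedded copy of ${\rm EO}_{2n}(R)$ sits inside ${\rm EO}_{2n+1}(R)$, giving $\sigma_{u,v}\in{\rm EO}_{2n+1}(R)$.

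The main obstacle is this last step: confirming that the even-rank elementary orthogonal group embeds into the odd-rank one under the ``fix the anisotropic coordinate'' inclusion. This is not immediate from the definitions, since the two groups are presented by visibly different families of generators ($oe_{i,j}$ versus the Fasel matrices $F^1,F^2$), so the correspondence must be made explicit through the derived generators $F^3,F^4,F^5$ and the index bookkeeping above. Everything else---the reduction to the hyperbolic summand and the verification of hypotheses---is routine once one observes that $u_1=v_1=0$.
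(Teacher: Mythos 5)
Your proposal is correct and follows essentially the same route as the paper: both observe that $u_1=v_1=0$ forces $\sigma_{u,v}=1\perp\sigma_{\bar u,\bar v}$, apply Lemma~\ref{TransEven} to place $\sigma_{\bar u,\bar v}$ in ${\rm EO}_{2n}(R)$, and conclude via the inclusion $1\perp{\rm EO}_{2n}(R)\subseteq{\rm EO}_{2n+1}(R)$. The only difference is one of detail: the paper asserts this last inclusion without comment, whereas you verify it explicitly by matching the shifted generators $oe_{i,j}(z)$ with the derived Fasel matrices $F^3,F^4,F^5$, which is a worthwhile addition rather than a departure.
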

		
		\begin{proof}
			The matrix corresponding to $\sigma_{u,v}$ is
			$$\left(\begin{smallmatrix}
				1&0&0&\cdots&0&0\\
				0&1+u_1'v_1''-v_1'u_1''-ru_1'u_1''&-ru_1'^2&\cdots&u_1'v_n''-v_1'u_n''-ru_1'u_n''&u_1'v_n'-v_1'u_n'-ru_1'u_n'\\
				0&ru_1''^2&1+u_1''v_1'-v_1''u_1'-ru_1''u_1'&\cdots&u_1''v_n''-v_1''u_n''-ru_1''u_n''&u_1''v_n'-v_1''u_n'-ru_1''u_n'\\
				\vdots& \vdots&\vdots&\ddots&\vdots&\vdots\\
				0&u_n'v_1''-v_n'u_1''-ru_n'u_1''&u_n'v_1''-v_n'u_1''-ru_n'u_1''&\cdots&1+u_n'v_n''-v_n'u_n''-ru_n'u_n''&1-ru_n'^2\\
				0&u_n''v_1''-v_n''u_1''-ru_n''u_1''&u_n''v_n'-v_n''u_1'-ru_n''u_1'&\cdots&-ru_n''^2&1+u_n''v_n'-v_n''u_n'-ru_n''u_n'\\
			\end{smallmatrix}\right).$$
			
			Now by Lemma \ref{TransEven}, we have the corresponding matrix $\sigma_{u,v} \in 1\perp {\rm EO}_{2n}(R) \subseteq {\rm EO}_{2n+1}(R)$.	\end{proof}
		\begin{lemma}\label{oddEO0u'u''andv1v'v''}
			Suppose $n\geq 3$. Let $u=(0,u_1',u_1'',u_2',u_2'',\ldots,u_n',u_n'')\in R^{2n+1} $ be an isotropic unimodular vector and $v=(v_1,v_1',v_1'',v_2',v_2'',\ldots,v_n',v_n'') \in R^{2n+1} $ with $\langle u,v \rangle =0$ and $v_i'=0$ or $v_i''=0$ for all $i\in \{1,2,\ldots, n\}$. Then $\sigma_{u,v} \in {\rm EO}_{2n+1}(R)$.
		\end{lemma}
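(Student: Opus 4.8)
The plan is to exploit the \emph{composition law} for Eichler--Siegel--Dickson transvections sharing a common isotropic vector $u$, namely that $\sigma_{u,v}\,\sigma_{u,w}=\sigma_{u,\,v+w}$ whenever $v$ and $w$ are both orthogonal to $u$. This follows from a direct computation once one observes that $\langle u,u\rangle=2q(u)=0$ and $q(v+w)=q(v)+q(w)+\langle v,w\rangle$. Write $e_1$ for the standard basis vector of the anisotropic coordinate (so $q(e_1)=1$) and set $v'=v-v_1e_1=(0,v_1',v_1'',\dots,v_n',v_n'')$. Since the special coordinate of $u$ vanishes, $\langle u,e_1\rangle=2u_1=0$, so both $v_1e_1$ and $v'$ are orthogonal to $u$, and the composition law yields the factorization
\begin{equation*}
\sigma_{u,v}=\sigma_{u,\,v_1e_1}\,\sigma_{u,\,v'}.
\end{equation*}
It then suffices to place each factor in ${\rm EO}_{2n+1}(R)$.

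For the second factor, both $u$ and $v'$ have vanishing special coordinate, so $\sigma_{u,v'}$ is exactly of the form treated in the preceding lemma; by that result $\sigma_{u,v'}\in 1\perp{\rm EO}_{2n}(R)\subseteq{\rm EO}_{2n+1}(R)$, the inclusion being the one already used in that proof via Lemma \ref{TransEven}.

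For the first factor I would reduce $u$ to a standard vector. As its special coordinate vanishes, $u$ is a unimodular isotropic vector of the even hyperbolic summand $\mathbb{H}(R)^n$, and for $n\geq 3$ the even elementary orthogonal group acts transitively on such vectors (the Suslin--Kopeiko argument of \cite{SuslinKopeiko77}, realized through the embeddings of ${\rm E}_n(R)$ and ${\rm Alt}_n(R)$ of Lemmas \ref{oddEO_1} and \ref{oddEO_2}). Choose $g\in 1\perp{\rm EO}_{2n}(R)\subseteq{\rm EO}_{2n+1}(R)$ fixing $e_1$ with $gu=x_1$, where $\{x_1,f_1\}$ is the standard hyperbolic basis of the first plane. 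Since conjugation of an Eichler transvection by an orthogonal map satisfies $g\,\sigma_{u,w}\,g^{-1}=\sigma_{gu,\,gw}$ and $g$ fixes $e_1$, we get
\begin{equation*}
g\,\sigma_{u,\,v_1e_1}\,g^{-1}=\sigma_{x_1,\,v_1e_1}.
\end{equation*}
Evaluating on $\{e_1,x_1,f_1\}$ shows that $\sigma_{x_1,v_1e_1}$ fixes $x_1$, sends $e_1\mapsto e_1+2v_1x_1$ and $f_1\mapsto f_1-v_1e_1-v_1^2x_1$; comparison with the Fasel generator gives $\sigma_{x_1,v_1e_1}=F_1^1(-v_1)\in{\rm EO}_{2n+1}(R)$. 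Hence $\sigma_{u,\,v_1e_1}=g^{-1}F_1^1(-v_1)\,g\in{\rm EO}_{2n+1}(R)$, and multiplying the two factors finishes the argument.

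The main obstacle is the reduction step for the first factor: one must produce $g$ in the \emph{even} elementary orthogonal group carrying $u$ to $x_1$ while fixing the special coordinate, and then track the conjugation precisely. This is exactly where the hypothesis $n\geq 3$ enters, through transitivity of ${\rm EO}_{2n}(R)$ on unimodular isotropic vectors. An alternative, purely matrix-theoretic route avoids this reduction but must instead control the cross terms $-v_1^2\,u\otimes u$ that the composition law spreads across all hyperbolic planes; it is in that route that the structural hypothesis $v_i'=0$ or $v_i''=0$ for each $i$ keeps the matrix of $\sigma_{u,v}$ transparent enough to be recognized as a product of the transformations in Lemmas \ref{oddEO_1} and \ref{oddEO_2}.
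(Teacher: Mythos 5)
Your opening moves are sound: the composition law $\sigma_{u,v}\sigma_{u,w}=\sigma_{u,v+w}$ is correct under the stated hypotheses, the splitting $\sigma_{u,v}=\sigma_{u,v_1e_1}\,\sigma_{u,v'}$ is legitimate since $u_1=0$ makes both $v_1e_1$ and $v'$ orthogonal to $u$, the second factor is indeed covered by the preceding lemma, and your computation identifying $\sigma_{x_1,v_1e_1}$ with $F_1^1(-v_1)$ checks out. The gap is the reduction step you yourself flag as the main obstacle: the existence of $g\in 1\perp{\rm EO}_{2n}(R)$ with $gu=x_1$. Transitivity of ${\rm EO}_{2n}(R)$ (or even of ${\rm O}_{2n}(R)$, followed by normality) on unimodular isotropic vectors of $\mathbb{H}(R)^n$ is \emph{not} a consequence of $n\geq 3$ over an arbitrary commutative ring with $2$ invertible; it is a stability/cancellation-type assertion. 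Indeed, every unimodular isotropic $u$ completes to a hyperbolic pair, so $\mathbb{H}(R)^n\cong\mathbb{H}(R)\perp V$, and moving $u$ to $x_1$ by an isometry is equivalent to Witt cancellation $V\cong\mathbb{H}(R)^{n-1}$ --- which requires hypotheses such as semilocality or stable-range/dimension bounds. It certainly does not follow from Lemmas \ref{oddEO_1} and \ref{oddEO_2}: those only embed ${\rm E}_n(R)$ and ${\rm Alt}_n(R)$ into ${\rm EO}_{2n+1}(R)$, and already the embedded ${\rm E}_n(R)$-orbit of $x_1$ misses vectors $\sum_i a_ix_i$ whose coefficient row is unimodular but not in the ${\rm E}_n(R)$-orbit of $(1,0,\ldots,0)$ (such rows exist for fixed $n\geq 3$, e.g.\ $(x,y,z)$ over $\mathbb{R}[x,y,z]/(x^2+y^2+z^2-1)$). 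Nor is it what Suslin--Kope\u{\i}ko prove: their Proposition 2.10 (Lemma \ref{TransEven}) establishes elementarity of $\sigma_{u,v}$ precisely \emph{without} reducing $u$ to a standard vector. Note that the paper reserves exactly your move --- completing $u$ to a hyperbolic pair and changing basis --- for Theorem \ref{TransOinEO}, where $R$ is assumed semilocal so that Roy's cancellation theorem applies. A further warning sign is that your argument, if valid, would prove the lemma without the hypothesis that $v_i'=0$ or $v_i''=0$ for each $i$, i.e.\ a strictly stronger statement than the one the paper can prove at this level of generality.

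For contrast, the paper's proof never moves $u$ at all: assuming (WLOG) $v_i'=0$ for all $i$, it writes the matrix of $\sigma_{u,v}$ explicitly and factors it as $\sigma_{u,v}=\sigma_1\sigma_2\sigma_1\sigma_3\sigma_4$, where $\sigma_1,\sigma_2$ lie in ${\rm EO}_R(R,\mathbb{H}(R)^n)={\rm EO}_{2n+1}(R)$ (DSER generators, via Lemma \ref{dser-eo-equal}) and carry the $v_1$-terms, while $\sigma_3,\sigma_4$ have exactly the shapes of Lemmas \ref{oddEO_1} and \ref{oddEO_2} and carry the $u_i'v_j''$ and $u_i''v_j''$ terms; the structural hypothesis on $v$ is what makes these last two factors land in the ${\rm E}_n$- and ${\rm Alt}_n$-shaped subgroups. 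If you want to salvage your (otherwise attractive) composition-law strategy, the factor $\sigma_{u,v_1e_1}$ must likewise be handled by an explicit splitting into DSER generators --- writing $u=u'+u''$ with $u'\in P$, $u''\in P^*$ and expressing $\sigma_{u,v_1e_1}$ as a product of transformations of the form $\sigma_{u',\lambda e_1}$ and $\sigma_{u'',\mu e_1}$ --- rather than by transporting $u$ to $x_1$.
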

		
		\begin{proof}
			Without loss of generality, we suppose that $v_i'=0$ for all $i$. Then the matrix corresponding to $\sigma_{u,v}$ is 
			$$ \left(\begin{smallmatrix}
				1&-v_1u_1''&-v_1u_1'&\cdots&-v_1u_n''&-v_1u_n'\\
				2u_1'v_1&1+u_1'v_1''-ru_1'u_1''&-ru_1'^2&\cdots&u_1'v_n''-ru_1'u_n''&-ru_1'u_n'\\
				2u_1''v_1&-ru_1''^2&1-v_1''u_1'-ru_1''u_1'&\cdots&u_1''v_n''-v_1''u_n''-ru_1''u_n''&u_1''v_n'-v_1''u_n'-ru_1''u_n'\\
				\vdots& \vdots&\vdots&\ddots&\vdots&\vdots\\
				2u_n'v_1&u_n'v_1''-ru_n'u_1''&u_n'v_1''-ru_n'u_1''&\cdots&1+u_n'v_n''-ru_n'u_n''&-ru_n'^2\\
				2u_n''v_1&u_n''v_1''-v_n''u_1''-ru_n''u_1''&u_n''v_n'-v_n''u_1'-ru_n''u_1'&\cdots&-ru_n''^2&1-v_n''u_n'-ru_n''u_n'\\
			\end{smallmatrix}\right).$$
			\vspace{1mm}
			
			\noindent By direct computations, we can observe that $\sigma_{u,v}$ can be written as a product 
			$$\sigma_{u,v}=\sigma_1 \sigma_2 \sigma_1\sigma_3\sigma_4,$$ where the components  $\sigma_i$'s are given as follows.
			$$\sigma_1=\begin{pmatrix}
				1&-\frac{v_1}{2}u_1''&0&\cdots&-\frac{v_1}{2}u_n''&0\\    
				0&1&0&\cdots&0&0\\
				u_1''{v_1}&-\frac{v_1^2}{4}u_1''^2&1&\cdots&-\frac{v_1^2}{4}u_1''u_n''&0\\
				\vdots& \vdots&\vdots&\ddots&\vdots&\vdots\\
				0&0&0&\cdots&1&0\\
				u_n''{v_1}&-\frac{v_1^2}{4}u_n''u_1''&0&\cdots&-\frac{v_1^2}{4}u_n''^2&1\\
			\end{pmatrix},$$
			\vspace{2mm}
			$$\sigma_2= \begin{pmatrix}
				1&0&-v_1u_1'&\cdots&0&-v_1u_n'\\
				2u_1'v_1&1&-\frac{v_1'^2}{2}u_1'^2&\cdots&0&-\frac{v_1^2}{2}u_1'u_n'\\
				0&0&1&\cdots&0&0\\
				\vdots& \vdots&\vdots&\ddots&\vdots&\vdots\\
				2u_n'v_1&0&-\frac{v_1^2}{2}u_n'u_1'&\cdots&1&-\frac{v_1^2}{2}u_n'^2\\      
				0&0&0&\cdots&0&1\\
			\end{pmatrix}, $$
			
			\vspace{2mm}
			$$ \sigma_3=\begin{pmatrix}
				1&0&0&\cdots&0&0\\    
				0&1+u_1'v_1''&0&\cdots&u_1'v_n''&0\\      
				0&0&1-v_1''u_1'&\cdots&0&-v_1''u_n'\\       
				\vdots& \vdots&\vdots&\ddots&\vdots&\vdots\\
				0&u_n'v_1''&0&\cdots&1+u_n'v_n''&0\\   
				0&0&-v_n''u_1'&\cdots&0&1-v_n''u_n'\\
			\end{pmatrix}, $$
			\vspace{2mm}
			$$ \sigma_4= \begin{pmatrix}
				1&0&0&\cdots&0&0\\     
				0&1&0&\cdots&0&0\\      
				0&0&1&\cdots&u_1''v_n''-v_1''u_n''&0\\      
				\vdots& \vdots&\vdots&\ddots&\vdots&\vdots\\
				0&0&0&\cdots&1&0\\     
				0&u_n''v_1''-v_n''u_1''&0&\cdots&0&1\\
			\end{pmatrix} .$$
			
			\vspace{2mm}
			\noindent Here $\sigma_1,\sigma_2 \in {\rm EO}_{R}(R,\mathbb{H}(R)^n)={\rm EO}_{2n+1}(R)$ and $\sigma_{3},\sigma_{4} \in {\rm EO}_{2n+1}(R)$ using Lemma \ref{oddEO_1} and Lemma \ref{oddEO_2}. Thus we get $\sigma_{u,v}\in{\rm EO}_{2n+1}(R)$. 	\end{proof}
		
		\noindent Now we prove the following theorem for a semilocal ring $R$.
		\begin{theorem}\label{TransOinEO}
			Let $R$ be a semilocal ring. Suppose $n\geq 3$. Let $u,v \in R^{2n+1} $, where $u$ is an isotropic unimodular vector with $\langle u,v \rangle =0$. Then $\sigma_{u,v} \in {\rm EO}_{2n+1}(R)$.
		\end{theorem}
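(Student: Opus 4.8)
The plan is to reduce the arbitrary pair $(u,v)$ to the two special configurations already settled in Lemma~\ref{oddEO0u'u''andv1v'v''} and in the lemma immediately preceding it (the case where the first, anisotropic, coordinate of both $u$ and $v$ vanishes). Two elementary tools drive the reduction: the \emph{equivariance} of ESD transvections under the orthogonal group, and the \emph{additivity} of $\sigma_{u,\cdot}$ for a fixed isotropic $u$. First I would record the equivariance identity $\varepsilon\,\sigma_{u,v}\,\varepsilon^{-1} = \sigma_{\varepsilon u,\varepsilon v}$, valid for every $\varepsilon \in {\rm O}_{2n+1}(R)$; it is immediate from the defining formula, since $\varepsilon$ preserves both $\langle\ ,\ \rangle$ and $q$. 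Because ${\rm EO}_{2n+1}(R)$ is a group, $\sigma_{u,v} \in {\rm EO}_{2n+1}(R)$ if and only if $\sigma_{\varepsilon u,\varepsilon v} \in {\rm EO}_{2n+1}(R)$ for any $\varepsilon \in {\rm EO}_{2n+1}(R)$; hence it suffices to prove the claim after replacing $(u,v)$ by $(\varepsilon u,\varepsilon v)$ for a conveniently chosen $\varepsilon$.

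Next, exploiting that $R$ is semilocal, I would invoke the Witt-type transitivity of ${\rm EO}_{2n+1}(R)$ on unimodular isotropic vectors to choose $\varepsilon$ so that the anisotropic coordinate of $\varepsilon u$ is zero, i.e. $(\varepsilon u)_1 = 0$. Replacing $(u,v)$ by $(\varepsilon u,\varepsilon v)$ and noting that $\varepsilon u$ remains isotropic and unimodular while $\langle \varepsilon u,\varepsilon v\rangle = \langle u,v\rangle = 0$, I may assume henceforth that $u = (0,u_1',u_1'',\ldots,u_n',u_n'')$.

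I would then establish the additivity law $\sigma_{u,w}\,\sigma_{u,v} = \sigma_{u,v+w}$ for $v,w \in u^{\perp}$ with $u$ isotropic, by a direct computation using $\langle u,\sigma_{u,v}(x)\rangle = \langle u,x\rangle$ and $q(v+w) = q(v)+q(w)+\langle v,w\rangle$. With $u_1 = 0$ in hand, I split $v = v^{(0)} + v^{(1)}$, where $v^{(0)} = (v_1,0,\ldots,0)$ carries only the anisotropic coordinate and $v^{(1)} = (0,v_1',v_1'',\ldots,v_n',v_n'')$ carries the hyperbolic ones. Since $u_1 = 0$ and the anisotropic basis vector is orthogonal to the whole hyperbolic part, one checks $\langle u,v^{(0)}\rangle = 0$, and consequently $\langle u,v^{(1)}\rangle = \langle u,v\rangle - \langle u,v^{(0)}\rangle = 0$; thus both summands lie in $u^{\perp}$. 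The additivity law now yields $\sigma_{u,v} = \sigma_{u,v^{(1)}}\,\sigma_{u,v^{(0)}}$. Here $\sigma_{u,v^{(0)}} \in {\rm EO}_{2n+1}(R)$ by Lemma~\ref{oddEO0u'u''andv1v'v''}, because all hyperbolic coordinates of $v^{(0)}$ vanish (so trivially $v_i' = 0$ for every $i$), while $\sigma_{u,v^{(1)}} \in {\rm EO}_{2n+1}(R)$ by the preceding lemma, since both $u$ and $v^{(1)}$ have vanishing first coordinate. As $n \geq 3$, both lemmas apply, and $\sigma_{u,v}$ is a product of two elements of ${\rm EO}_{2n+1}(R)$, as required.

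I expect the genuine obstacle to be the transitivity invoked in the second step: clearing the anisotropic coordinate of a unimodular isotropic vector by an element of the \emph{elementary} orthogonal group is precisely where semilocality is indispensable, and I would need either to cite the appropriate semilocal orthogonal Witt/transitivity statement or to prove it directly using the odd generators $F_i^1,F_i^2$ together with Lemmas~\ref{oddEO_1} and~\ref{oddEO_2}. By contrast, the equivariance and additivity identities are formal, and once $u_1 = 0$ is achieved the decomposition of $v$ reduces everything to the two already-proved special cases.
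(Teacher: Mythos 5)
Your proposal is correct and shares the paper's overall skeleton --- first normalize $u$ so that its anisotropic coordinate vanishes (this is exactly where semilocality enters), then split $v$ additively and invoke the already-proved special cases --- but the normalization step is implemented by a genuinely different tool. The paper completes $u$ to a hyperbolic pair $(u,w)$ and uses Roy's cancellation theorem for semilocal rings (Theorem~8.1 of \cite{Roy1968}) to produce a new ordered basis $\{z',u,w,x_1,y_1,\ldots,x_{n-1},y_{n-1}\}$ leaving the Gram matrix unchanged, in which $u$ becomes $(0,1,0,\ldots,0)$; since the transition matrix is only orthogonal, not elementary, the paper must then appeal (in the remark following its proof) to normality of ${\rm EO}_{2n+1}(R)$ in ${\rm O}_{2n+1}(R)$ to conclude that the change of basis does not destroy elementarity. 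You instead conjugate by an element of ${\rm EO}_{2n+1}(R)$ itself, which makes the transfer of elementarity trivial, but this requires Eichler--Witt transitivity of the \emph{elementary} orthogonal group on unimodular isotropic vectors over a semilocal ring --- a theorem of essentially the same depth as the cancellation theorem, proved by closely related arguments (it is available in \cite{Roy1968} and in Chapter~8 of \cite{HahnOMeara1989}); you correctly single this out as the crux, and it does need an explicit citation or proof before your argument is complete. The remaining differences are cosmetic: the paper decomposes $v$ coordinate-by-coordinate into the pieces $v_i=ve_{i,i}$, $i\neq 3$, and applies Lemma~\ref{oddEO0u'u''andv1v'v''} to each factor, while you split $v$ into just two blocks (anisotropic plus hyperbolic) and use Lemma~\ref{oddEO0u'u''andv1v'v''} for one factor and the unlabelled lemma preceding it for the other; both decompositions rest on the same additivity law $\sigma_{u,v}\sigma_{u,w}=\sigma_{u,v+w}$ for $v,w\in u^{\perp}$, which the paper uses silently and you rightly insist on verifying.
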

		
		\begin{proof}
			Since $M$ is a non-singular quadratic space and $u$ an isotropic unimodular vector of $M$, $u$ can be completed to a hyperbolic pair $(u,w)$ of $M$. Let $\mathcal{B}$ be an ordered basis of $M$ with $u,w \in \mathcal{B}$. Let $\mathcal{B}=\{z',u,w,x_1,y_1,\ldots,x_{n-1},y_{n-1}\}$, where $(x_i,y_i)$ forms a hyperbolic pair and $z'\in M$ with $q(z')=1$ and $\langle z',u\rangle=\langle z',w\rangle=\langle z',x_i\rangle=\langle z',y_i\rangle=0$. (The existence of such a vector $z'$ is guaranteed by the cancellation theorem for semilocal ring, Theorem 8.1 in \cite{Roy1968}). The choice of the basis affirm the invariance of the symmetric matrix corresponding to the bilinear form we have began with. Now the vector corresponding to $u$ with respect to the new basis $\mathcal{B}$ is $(0,1,0,\ldots,0)\in R^{2n+1}$. Let $v$ be any vector orthogonal to $u$. Then the vector corresponding to $v$ has the form $(r,t,0,r_1,s_1,\ldots,r_{n-1},s_{n-1})$. define $v_i=ve_{i,i}$ for all $i\neq 3$, where $e_{ij}$ is the usual elementary matrix with 1 in ${ij}^{th}$ position and all other entries 0.
			Now we have $$\sigma_{u,v}=\prod_{i\neq 3}\sigma_{u,v_i}.$$
			By Theorem \ref{oddEO0u'u''andv1v'v''}, we know that $\sigma_{u,v_i}\in {\rm EO}_{2n+1}{(R)}$. Thus we get $\sigma_{u,v} \in {\rm EO}_{2n+1}(R)$.  \end{proof}
		
		\vspace{2mm}
		
		{\bf Remark:} Let $\mathcal{B}_o$ and $\mathcal{B}_n$ be two ordered basis of $M$ and let $T_o$ and $T_n$ be the matrix of the transformation with respect to the basis $\mathcal{B}_o$ and $\mathcal{B}_n$ respectively. Then by basic linear algebra, we have $T_n=P^{-1}T_oP,$ where $P\in {\rm GL}_n$ is the matrix corresponding to the change of basis (Transition matrix). 
		%
		%
		Now let $\varphi$ be the matrix corresponding to the symmetric bilinear form on $M$. We choose the new matrix in such a way that the new basis does not change the matrix associated to the form. Therefore, we have $P^T\varphi P=\varphi.$
		Thus we get $P\in {\rm O}_{2n+1}(R)$ and by the normality of elementary orthogonal group in ${\rm O}_{2n+1}(R)$, we can see that the change of basis does not affect the matrix being elementary.
		
		\vspace{2mm}
		\noindent We have the realtive version of Theorem \ref{TransOinEO} as follows.
		\begin{theorem}\label{TransEO}
			Let $R$ be a semilocal ring and $I$ be an ideal of $R$. Suppose $n\geq 3$. Let $u\in R^{2n+1} $ and $v\in I^{2n+1}$ where $u$ is unimodular vector with $\langle u,v \rangle =0$. Then $\sigma_{u,v} \in {\rm EO}_{2n+1}(R,I)$.
		\end{theorem}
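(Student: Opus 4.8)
The plan is to run the argument of Theorem~\ref{TransOinEO} while keeping track of the ideal $I$ at every step. First I would complete the isotropic unimodular vector $u$ to a hyperbolic pair $(u,w)$ and, as in Theorem~\ref{TransOinEO}, use the cancellation theorem for semilocal rings to fix an ordered basis $\mathcal{B}=\{z',u,w,x_1,y_1,\ldots,x_{n-1},y_{n-1}\}$ with $q(z')=1$ and $z'$ orthogonal to all other basis vectors. The transition matrix $P$ to this basis is orthogonal, so it preserves the form, and by the normality remark following Theorem~\ref{TransOinEO} the change of basis does not disturb membership in the elementary group. The essential point in the relative setting is that $P$ has entries in $R$ while $I$ is an ideal, so the coordinate vector of $v$ in the basis $\mathcal{B}$ again lies in $I^{2n+1}$; in these coordinates $u=(0,1,0,\ldots,0)$ and $v=(r,t,0,r_1,s_1,\ldots,r_{n-1},s_{n-1})$ with every entry in $I$.

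Next I would reuse the splitting $\sigma_{u,v}=\prod_{i\neq 3}\sigma_{u,v_i}$, with $v_i=ve_{i,i}$, observing that each $v_i$ has all its entries in $I$. Hence it suffices to prove a relative refinement of Lemma~\ref{oddEO0u'u''andv1v'v''}: if the second argument lies in $I^{2n+1}$, then the resulting transvection lies in ${\rm EO}_{2n+1}(R,I)$. For this I would revisit the factorization $\sigma_{u,v_i}=\sigma_1\sigma_2\sigma_1\sigma_3\sigma_4$ established there and simply trace the ideal through the entries: each off-diagonal entry of $\sigma_1,\dots,\sigma_4$ is a product of coordinates of $v_i$ (and $r=q(v_i)\in I^2\subseteq I$), hence lies in $I$.

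The two families of factors are then absorbed by the relative results already available. The factors $\sigma_1,\sigma_2$ are built from DSER elementary transformations $E_\alpha$ and $E_\beta^*$ whose defining maps have image in $IP$ and $IP^*$, so they are relative DSER transformations lying in ${\rm EO}_{(R,I)}(R,\mathbb{H}(R)^n)={\rm EO}_{2n+1}(R,I)$ by Lemma~\ref{dser=eo_odd1}. The factors $\sigma_3,\sigma_4$ have exactly the shapes treated in the relative Lemmas~\ref{oddEO_1} and~\ref{oddEO_2}, and their defining elementary and alternating blocks have entries in $I$, so they lie in ${\rm EO}_{2n+1}(I)\subseteq{\rm EO}_{2n+1}(R,I)$. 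Since ${\rm EO}_{2n+1}(R,I)$ is a group, $\sigma_{u,v_i}\in{\rm EO}_{2n+1}(R,I)$, and forming the product over $i\neq 3$ yields $\sigma_{u,v}\in{\rm EO}_{2n+1}(R,I)$.

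The step I expect to demand the most care is this last bookkeeping: one must verify not merely that the entries of $\sigma_1,\dots,\sigma_4$ lie in $I$, but that each factor genuinely lies in the relative \emph{elementary} group, that is, that the maps defining $\sigma_1,\sigma_2$ really factor through $IP$ and $IP^*$, and that the blocks producing $\sigma_3,\sigma_4$ are honestly in ${\rm E}_n(I)$ and ${\rm Alt}_n(I)$ rather than merely having $I$-entries. Once this is confirmed, the relative Lemmas~\ref{oddEO_1}, \ref{oddEO_2} and the relative DSER equality of Lemma~\ref{dser=eo_odd1} close the argument exactly as in the absolute case.
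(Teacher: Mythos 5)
Your proposal is correct and takes essentially the same route as the paper: the paper's proof of Theorem \ref{TransEO} consists of the single remark that the argument of Theorem \ref{TransOinEO} goes through unchanged, and your elaboration — change of basis with the coordinates of $v$ remaining in $I^{2n+1}$, the splitting $\sigma_{u,v}=\prod_{i\neq 3}\sigma_{u,v_i}$, and the relative forms of Lemmas \ref{oddEO_1}, \ref{oddEO_2} and \ref{dser=eo_odd1} applied to the factorization $\sigma_1\sigma_2\sigma_1\sigma_3\sigma_4$ — is precisely the bookkeeping that remark leaves implicit. Indeed your write-up is more explicit than the paper's, since you flag the one delicate point (that the blocks defining $\sigma_3$, $\sigma_4$ lie genuinely in ${\rm E}_n(I)$ and ${\rm Alt}_n(I)$ rather than merely having entries in $I$), which holds here because after the change of basis $u$ becomes a standard basis vector.
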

		\begin{proof}
			The proof proceeds in the same way as in Theorem~\ref{TransOinEO}.	\end{proof}
		
		\noindent Now we can deduce a more general theorem as follows:
		
		\begin{theorem}\label{equalityfreecase}
			Let $R$ be a commutative ring in which 2 is invertible and $I$ be an ideal of $R$. Let $Q$ is a free quadratic $R$-space and let $M=Q \perp \mathbb{H}(R)^m$, such that the form $\langle\,,\, \rangle $ corresponds to the standard symmetric matrix $\varphi$ with respect to the standard basis on $M$.
			Then $${\rm TransO}(M,IM, \langle \,,\, \rangle )={\rm ETransO}(M,IM, \langle \,,\, \rangle ).$$
		\end{theorem}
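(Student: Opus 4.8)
The plan is to prove the two inclusions separately, reducing everything to the standard relative elementary orthogonal group ${\rm EO}_{N}(R,I)$ of the ambient rank $N = n + 2m$, where $n = {\rm rank}\,Q$. The inclusion ${\rm ETransO}(M,IM,\langle\,,\,\rangle) \subseteq {\rm TransO}(M,IM,\langle\,,\,\rangle)$ is the easy one. Each true relative elementary transvection $E_{1i}^{w},E_{2i}^{w}$ with $w\in IQ$ is a relative DSER transformation, hence a relative ESD transvection (a $\sigma_{u,v}$ with $v\in IM$, since $(w,0,0)\in IM$). By the relative form of Lemma \ref{DSER=ETrans} recorded in the Remark, ${\rm ETransO}(M,IM)={\rm EO}_{(R,I)}(Q,\mathbb{H}(R)^m)$, and that same Remark gives ${\rm EO}_{(R,I)}(Q,\mathbb{H}(R)^m)\subseteq {\rm TransO}(M,IM)$; combining these yields the inclusion.

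For the reverse inclusion I would first identify the target. Using the relative analogue of Lemma \ref{dser=eo_odd2} (applicable because $Q$ is diagonalizable, as $2\in R^{\times}$) I would absorb all but one hyperbolic summand into $Q$, writing ${\rm ETransO}(M,IM)={\rm EO}_{(R,I)}(Q,\mathbb{H}(R)^m)={\rm EO}_{(R,I)}(Q\perp\mathbb{H}(R)^{m-1},\mathbb{H}(R))$. According to the parity of $N$, Lemma \ref{EOeqEOpsi} (even rank) or Lemma \ref{dser=eo_odd} (odd rank) then identifies this group with the standard relative elementary orthogonal group ${\rm EO}_{N}(R,I)$. Since ${\rm TransO}(M,IM)$ is by definition the normal closure in ${\rm TransO}(M)$ of the group generated by the relative ESD transvections $\sigma_{u,v}$ ($v\in IM$), and ${\rm EO}_{N}(R,I)$ is normal in ${\rm O}_{N}(R)\supseteq{\rm TransO}(M)$ by the normality theorem for relative elementary orthogonal groups, it suffices to prove that every such generator $\sigma_{u,v}$ lies in ${\rm EO}_{N}(R,I)$.

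This last point splits by parity. In the even case the relative analogue of Lemma \ref{TransEven} gives $\sigma_{u,v}\in{\rm EO}_{N}(R,I)$ directly over the arbitrary ring $R$. In the odd case I only have the semilocal statement of Theorem \ref{TransEO}, so here I would pass to the general ring by a local--global argument: consider the polynomial family $\gamma(X):=\sigma_{u,Xv}\in{\rm O}_{N}(R[X])$, which satisfies $\gamma(0)=I$ and $\gamma(1)=\sigma_{u,v}$. For each maximal ideal $\mathfrak{m}$ the ring $R_{\mathfrak{m}}$ is semilocal, and after conjugating by a constant-in-$X$ element of ${\rm EO}_{N}(R_{\mathfrak{m}})$ that brings the isotropic unimodular $u$ into standard position, the explicit factorizations of Lemmas \ref{oddEO_1}, \ref{oddEO_2} and \ref{oddEO0u'u''andv1v'v''} apply verbatim with $v$ replaced by $Xv$, showing $\gamma(X)_{\mathfrak{m}}\in{\rm EO}_{N}(R_{\mathfrak{m}}[X],I_{\mathfrak{m}}[X])$. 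The local--global principle for the relative elementary orthogonal group then yields $\gamma(X)\in{\rm EO}_{N}(R[X],I[X])$, and specializing $X=1$ gives $\sigma_{u,v}\in{\rm EO}_{N}(R,I)$, which finishes the reverse inclusion and hence the equality.

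I expect the odd-case local--global step to be the main obstacle, for two reasons. First, one must check that the factorizations established over the semilocal ring $R_{\mathfrak{m}}$ are genuinely \emph{polynomial} in the entries of $v$, so that replacing $v$ by $Xv$ keeps $\gamma(X)_{\mathfrak{m}}$ in the elementary subgroup over $R_{\mathfrak{m}}[X]$ and the hypothesis of the local--global principle truly holds; the constant-in-$X$ normalization of $u$ is what makes this work. Second, Theorem \ref{TransEO} and Lemma \ref{TransEven} carry the stability restriction $n\ge 3$, so very small ranks would have to be handled separately, presumably by embedding $M$ into a higher-rank module via an extra hyperbolic summand and then descending.
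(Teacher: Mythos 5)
Your proposal follows the same skeleton as the paper's proof --- both establish the chain ${\rm TransO}(M,IM,\langle\,,\,\rangle)\subseteq{\rm EO}_{n+2m}(R,I)={\rm ETransO}(M,IM,\langle\,,\,\rangle)\subseteq{\rm TransO}(M,IM,\langle\,,\,\rangle)$, with the middle equality drawn from Lemma \ref{DSER=ETrans}, Lemma \ref{dser=eo_odd} and Lemma \ref{EOeqEOpsi}, and the final inclusion from the observation that relative DSER generators are relative ESD transvections --- but you part ways on the first inclusion, and the difference is substantive. The paper disposes of ${\rm TransO}(M,IM)\subseteq{\rm EO}_{n+2m}(R,I)$ by citing Theorem \ref{TransEO}, which is proved only for semilocal rings (and only in odd rank), even though Theorem \ref{equalityfreecase} is stated for an arbitrary commutative ring and is later applied, inside the proof of Theorem \ref{equality-general}, over $R_{\mathfrak m}[X]$, which is not semilocal. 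Your treatment --- splitting by parity, invoking a relative form of Lemma \ref{TransEven} in the even case, and in the odd case interpolating $\gamma(X)=\sigma_{u,Xv}$, checking local elementarity via the factorizations behind Lemma \ref{oddEO0u'u''andv1v'v''} after a constant-in-$X$ normalization of $u$, then concluding by Lemma \ref{RLG} and specializing at $X=1$ --- supplies exactly the semilocal-to-general bridge that the paper's citation omits, and it is the same device the paper itself deploys one level up in Theorem \ref{equality-general}. What each approach buys: the paper's one-line citation chain is adequate if the theorem is only ever fed (semi)local data, whereas your argument proves the statement as literally written, at the cost of some inputs you assert rather than prove --- the relative analogue of the Suslin--Kopeiko result (Lemma \ref{TransEven} is absolute), normality of the relative elementary group under the constant-in-$X$ orthogonal conjugation, and the hyperbolic-rank hypothesis $m\geq 2$ of Lemma \ref{RLG} --- together with the low-rank exclusions $n\geq 3$. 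Note, however, that the paper's own proof tacitly needs the same relative even-rank input and the same rank restrictions, so these caveats are shared rather than newly introduced by you; on this statement your account is, if anything, the more complete of the two.
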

		
		\begin{proof}
			For the given quadratic $R$-space $M=Q\perp\mathbb{H}(R)^m$, we have the following relations from Lemma \ref{TransEO}, Theorem \ref{DSER=ETrans}, Lemma \ref{dser=eo_odd} and Lemma \ref{EOeqEOpsi}
			$${\rm TransO}(M,IM,\langle \,,\, \rangle)\subseteq{\rm EO}_{n+2m}(R,I)={\rm ETransO}(M,IM,\langle \,,\, \rangle)\subseteq{\rm TransO}(M,IM,\langle \,,\, \rangle).$$	\end{proof}
		
		\section{Comparison of ESD transvection group and Elementary transvection group in general case}
		
		In the previous section, we have proved the equality of the transvection group and the elementary transvection group in the relative case, when the quadratic form corresponding to $Q$ is the standard symmetric matrix. In this section, we consider a more general case.
		\begin{lemma}\label{transvection-general}
			Let $M$ be a free quadratic $R$-space of rank $n$. Let $\langle \,,\,\rangle_{\varphi'}$ and $\langle\, ,\, \rangle_{\varphi^*}$ be two quadratic forms on $M$, where $\varphi'$ and $\varphi^*$ are the matrices corresponding to the quadratic forms with respect to an ordered basis $\mathcal{B}$ of $M$. Suppose that $\varphi' = \epsilon^t {\varphi^*}\epsilon$, for some $\epsilon \in {\rm GL}_n(R)$. Then we have, 
			${\rm TransO}(M,\langle\,,\,\rangle_{\varphi'}) = \epsilon^{-1} {\rm TransO}(M,\langle\,,\,\rangle_{\varphi^*})\epsilon.$
		\end{lemma}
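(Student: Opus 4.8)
The plan is to realize $\epsilon$ as an isometry between the two quadratic spaces and then to show that conjugation by $\epsilon$ carries each ESD generator of one transvection group to an ESD generator of the other.

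First I would unwind the congruence hypothesis. Writing vectors in coordinates with respect to the basis $\mathcal{B}$, the relation $\varphi' = \epsilon^t \varphi^* \epsilon$ gives, for all $x,y \in M$,
$$\langle x,y\rangle_{\varphi'} = x^t \varphi' y = (\epsilon x)^t \varphi^* (\epsilon y) = \langle \epsilon x, \epsilon y\rangle_{\varphi^*},$$
and correspondingly $q_{\varphi'}(x) = q_{\varphi^*}(\epsilon x)$, using that the associated quadratic form is $q(x) = \tfrac12\langle x,x\rangle$ since $2$ is invertible. Thus $\epsilon \in {\rm GL}_n(R)$ is an isometry $\epsilon : (M,\langle\,,\,\rangle_{\varphi'}) \to (M,\langle\,,\,\rangle_{\varphi^*})$, and in particular $\varphi'$ is non-degenerate whenever $\varphi^*$ is.

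The heart of the argument is a single computation. Let $\sigma^{\varphi'}_{u,v}$ denote the ESD transvection defined using the form $\langle\,,\,\rangle_{\varphi'}$, with $u$ unimodular, $q_{\varphi'}(u)=0$, $\langle u,v\rangle_{\varphi'}=0$ and $r = q_{\varphi'}(v)$. Substituting $x = \epsilon^{-1}y$ into the defining formula and applying the transfer identities above, I would verify directly that
$$\epsilon\, \sigma^{\varphi'}_{u,v}\, \epsilon^{-1} = \sigma^{\varphi^*}_{\epsilon u,\, \epsilon v},$$
the ESD transvection for the form $\langle\,,\,\rangle_{\varphi^*}$ determined by the pair $(\epsilon u, \epsilon v)$. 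For this to be meaningful one must check that $(\epsilon u,\epsilon v)$ is admissible data: $\epsilon u$ is unimodular because $\epsilon$ is invertible; $q_{\varphi^*}(\epsilon u) = q_{\varphi'}(u)=0$; $\langle \epsilon u,\epsilon v\rangle_{\varphi^*} = \langle u,v\rangle_{\varphi'}=0$; and the scalar is preserved, $q_{\varphi^*}(\epsilon v) = q_{\varphi'}(v)=r$, which is exactly what forces the $-ur\langle u,\cdot\rangle$ term to match.

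Finally, since $\epsilon$ is invertible the assignment $(u,v)\mapsto (\epsilon u,\epsilon v)$ is a bijection between the admissible pairs for $\varphi'$ and those for $\varphi^*$, so conjugation by $\epsilon$ sends the generating set of ${\rm TransO}(M,\langle\,,\,\rangle_{\varphi'})$ bijectively onto the generating set of ${\rm TransO}(M,\langle\,,\,\rangle_{\varphi^*})$. As conjugation is a group automorphism of the ambient general linear group, this yields $\epsilon\,{\rm TransO}(M,\langle\,,\,\rangle_{\varphi'})\,\epsilon^{-1} = {\rm TransO}(M,\langle\,,\,\rangle_{\varphi^*})$, which rearranges to the asserted identity ${\rm TransO}(M,\langle\,,\,\rangle_{\varphi'}) = \epsilon^{-1}{\rm TransO}(M,\langle\,,\,\rangle_{\varphi^*})\epsilon$. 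The only real obstacle is the generator-conjugation identity, and within it the preservation of the scalar $r=q(v)$ under the isometry, since that is the single point at which the quadratic (rather than merely bilinear) structure is used; the remaining verifications are routine bookkeeping.
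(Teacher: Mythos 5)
Your proposal is correct and follows essentially the same route as the paper: both arguments reduce to the single computation that conjugation by $\epsilon$ carries an ESD generator for one form to the ESD generator for the other form attached to the transported pair, using the transfer identity $\langle x,y\rangle_{\varphi'}=\langle \epsilon x,\epsilon y\rangle_{\varphi^*}$ coming from $\varphi'=\epsilon^t\varphi^*\epsilon$ (the paper carries this out as an explicit matrix identity $\epsilon^{-1}(I_n+uv^t\varphi^*-vu^t\varphi^*-uru^t\varphi^*)\epsilon=I_n+\tilde u\tilde v^t\varphi'-\tilde v\tilde u^t\varphi'-\tilde u r\tilde u^t\varphi'$ with $\tilde u=\epsilon^{-1}u$, $\tilde v=\epsilon^{-1}v$). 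Your version is, if anything, slightly more complete, since you also record the admissibility checks (unimodularity of $\epsilon u$, vanishing of $q(\epsilon u)$ and $\langle\epsilon u,\epsilon v\rangle$, preservation of $r$) and the bijection of generating sets that yields both inclusions, which the paper leaves implicit.
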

		
		\begin{proof}
			Given that M is a free quadratic $R$-space. The matrix corresponding to the ESD transformation $\sigma_{u,v}$ on $(M,\langle,\rangle_{\varphi'})$ for $ u, v \in M$ is given by,
			$$I_n+uv^t\varphi' -vu^t\varphi' -uru^t\varphi' = (I_n+uv^t\varphi' -vu^t\varphi') (I_n -uru^t\varphi'), $$ 
			since $\langle u,v\rangle_{\varphi'} = 0$ and $\langle u,u\rangle_{\varphi'} = 0$ with $\langle v,v\rangle_{\varphi'} = r $.\newline  
			
			Similarly, the matrix corresponding to the transformation on $\sigma_{u,v}$ for $ u, v \in M, $ on $(M,\langle,\rangle_{\varphi^*})$ can be obtained by replacing $\varphi'$ by $\varphi^*$. Now, for $\epsilon \in {\rm GL}_n(R)$, 
			\begin{equation*}
				\begin{split}
					\epsilon^{-1}(I_n+uv^t\varphi^* &-vu^t\varphi^* -uru^t\varphi^*)\epsilon = (I_n+\epsilon^{-1}uv^t\varphi^*\epsilon -\epsilon^{-1}vu^t\varphi^*\epsilon -\epsilon^{-1}uru^t\varphi^*\epsilon)\\ 
					&=(I_n+\epsilon^{-1}u v^t{\epsilon^{t}}^{-1} \epsilon^{t} \varphi^*\epsilon -\epsilon^{-1} vu^t{\epsilon^{t}}^{-1} \epsilon^{t}\varphi^*\epsilon -\epsilon^{-1}uru^t{\epsilon^{t}}^{-1} \epsilon^{t}\varphi^*\epsilon)\\
					&=(I_n+\tilde u \tilde v^t \varphi'-\tilde v \tilde u^t\varphi' -\tilde ur\tilde u^t\varphi'),
				\end{split}
			\end{equation*}
			where $\tilde v = \epsilon^{-1} v $ and $\tilde u = \epsilon^{-1} u $ and $\langle v ,u \rangle_{\varphi^{*}}= v^t\varphi^{*}u= v^t{\epsilon^{-1}}^t\varphi^{*} \epsilon^{-1}u =\tilde v^t\varphi \tilde u = \langle \tilde v ,\tilde u \rangle_{\varphi}. $ 	\end{proof}
		
		\begin{lemma}\label{ETrans-general1}
			Let $Q$ be a free quadratic $R$-space of rank $n$, and $M=Q \perp R^2$. Let $\langle \,,\,\rangle_\varphi'$ and $\langle \,,\, \rangle_{\varphi^*}$ be two quadratic forms on $Q$ where $\varphi'$ and $\varphi^*$ are the matrices corresponding to the quadratic forms with respect an ordered basis $\mathcal{B}$ of $M$. Suppose that $\varphi = \epsilon^t {\varphi^*}\epsilon,$ for some $\epsilon \in {\rm GL}_n(R)$. Then
			${\rm ETransO}(M,\langle\,,\,\rangle_{\varphi' \perp \psi_1}) = (\epsilon^{-1}\perp I_2) {\rm ETransO}(M,\langle\,,\,\rangle_{\varphi^*\perp \psi_1)}(\epsilon \perp I_2).$
		\end{lemma}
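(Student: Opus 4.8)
The plan is to run the matrix argument of Lemma~\ref{transvection-general} at the level of the explicit generators of ${\rm ETransO}$. Since $M=Q\perp R^2$ with the $R^2$ factor carrying the hyperbolic form $\psi_1$, the group ${\rm ETransO}(M,\langle\,,\,\rangle_{\varphi^*\perp\psi_1})$ is generated by the elementary orthogonal transvections $E_1^w$ and $E_2^w$ for $w\in Q$, so it suffices to understand how a single generator behaves under conjugation by $\epsilon^{-1}\perp I_2$.

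First I would record, exactly as in the proof of Lemma~\ref{EOeqEOStd}, the matrix of the elementary transvections with respect to the basis $\mathcal B$ and an arbitrary form matrix $\varphi^*$ on $Q$:
$$E_1^w[\varphi^*]=\begin{pmatrix} I_n & 0 & -w\\ w^{t}\varphi^* & 1 & -q^*(w)\\ 0 & 0 & 1\end{pmatrix},\qquad E_2^w[\varphi^*]=\begin{pmatrix} I_n & -w & 0\\ 0 & 1 & 0\\ w^{t}\varphi^* & -q^*(w) & 1\end{pmatrix},$$
where, by abuse of notation, $w$ also denotes the coordinate column of $w\in Q$ and $q^*(w)=\tfrac12\,w^{t}\varphi^* w$. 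The essential observation is that the form intervenes only through the row $w^{t}\varphi^*$ and the scalar $q^*(w)$.

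The central computation is then the block multiplication
$$(\epsilon^{-1}\perp I_2)\,E_1^w[\varphi^*]\,(\epsilon\perp I_2)=\begin{pmatrix} I_n & 0 & -\epsilon^{-1}w\\ w^{t}\varphi^*\epsilon & 1 & -q^*(w)\\ 0 & 0 & 1\end{pmatrix}$$
together with its analogue for $E_2^w$. Writing $\tilde w=\epsilon^{-1}w$ and invoking the congruence hypothesis $\varphi'=\epsilon^{t}\varphi^*\epsilon$, I would verify the two identities $w^{t}\varphi^*\epsilon=\tilde w^{t}\varphi'$ and $q^*(w)=q'(\tilde w)$. The first is immediate from $\tilde w^{t}\varphi'=w^{t}(\epsilon^{t})^{-1}(\epsilon^{t}\varphi^*\epsilon)=w^{t}\varphi^*\epsilon$, and the second follows by the same substitution in $q'(\tilde w)=\tfrac12\,\tilde w^{t}\varphi'\tilde w$. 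Together these identify the conjugate with $E_1^{\epsilon^{-1}w}[\varphi']$, and the identical manipulation gives $(\epsilon^{-1}\perp I_2)E_2^w[\varphi^*](\epsilon\perp I_2)=E_2^{\epsilon^{-1}w}[\varphi']$.

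To conclude, since $\epsilon\in{\rm GL}_n(R)$ the assignment $w\mapsto\epsilon^{-1}w$ is a bijection of $Q$, so conjugation by $\epsilon^{-1}\perp I_2$ is a group isomorphism carrying the full generating set $\{E_1^w,E_2^w:w\in Q\}$ of ${\rm ETransO}(M,\langle\,,\,\rangle_{\varphi^*\perp\psi_1})$ onto that of ${\rm ETransO}(M,\langle\,,\,\rangle_{\varphi'\perp\psi_1})$, which yields the claimed equality of subgroups. I do not expect a genuine obstacle: the argument is a direct transcription of Lemma~\ref{transvection-general}, and the only step demanding care is the verification that the quadratic entry $q^*(w)$ is preserved under the substitution, which is precisely where the congruence $\varphi'=\epsilon^{t}\varphi^*\epsilon$, as opposed to mere $R$-linear similarity, is indispensable.
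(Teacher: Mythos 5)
Your proposal is correct and follows essentially the same route as the paper: write the generators $E_1^w$, $E_2^w$ as block matrices with respect to $\mathcal{B}$, conjugate by $\epsilon^{-1}\perp I_2$, and use the congruence $\varphi'=\epsilon^t\varphi^*\epsilon$ to identify the result with $E_i^{\epsilon^{-1}w}$ for the form $\varphi'$. The only (harmless) differences are that you obtain both inclusions at once by observing that $w\mapsto\epsilon^{-1}w$ is a bijection of $Q$, where the paper verifies the two inclusions separately, and that you explicitly check the invariance of the quadratic entry $q^*(w)=q'(\epsilon^{-1}w)$, a detail the paper leaves implicit.
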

		
		\begin{proof}
			For the generators $(E_{1}^{w})_{\varphi^*}$ of ${\rm ETransO}(M,\langle,\rangle_{\varphi^*\perp \psi_1)}$, we have
			\begin{equation*}
				\begin{split}
					(\epsilon^{-1}\perp I_2)(E_{1}^{w})_{\varphi^*} (\epsilon \perp I_2) &=
					\begin{pmatrix}
						\epsilon^{-1}&0&0\\
						0&1&0\\
						0&0&1\\
					\end{pmatrix}\begin{pmatrix}
						I_n&0&-w\\ 
						w^t{\varphi^*}^t&1&-q(w)\\
						0&0&1
					\end{pmatrix}\begin{pmatrix}
						\epsilon &0&0\\
						0&1&0\\
						0&0&1\\
					\end{pmatrix}\\
					&=\begin{pmatrix}
						I_n&0&-\epsilon^{-1}w\\ 
						{w}^t{\varphi^*}^t\epsilon&1&-q(w)\\
						0&0&1
					\end{pmatrix}\\
					&=\begin{pmatrix}
						I_n&0&-\epsilon^{-1}w\\ 
						{(\epsilon^{-1}w)}^t{\varphi'}^t&1&-q(w)\\
						0&0&1
					\end{pmatrix}\\
					&=(E_{1}^{\epsilon^{-1}w})_{\varphi'}.
				\end{split}
			\end{equation*}

			\vspace{2mm}
			
			\noindent Similarly for $(E_2^{w})_{\varphi^*}\in{\rm ETransO}(M,\langle,\rangle_{\varphi^*\perp \psi_1})$, we get
			\begin{equation*}
				\begin{split}
					(\epsilon^{-1}\perp I_2)(E_2^{w})_{\varphi^*}(\epsilon \perp I_2) &=
					\begin{pmatrix}
						\epsilon^{-1}&0&0\\
						0&1&0\\
						0&0&1\\
					\end{pmatrix}\begin{pmatrix}
						I_n&-w&0\\ 
						0&1&0\\
						w^t{\varphi^*}^t&-q(w)&1
					\end{pmatrix}\begin{pmatrix}
						\epsilon &0&0\\
						0&1&0\\
						0&0&1\\
					\end{pmatrix}\\
					&=\begin{pmatrix}
						I_n&-\epsilon^{-1}w&0\\ 
						0&1&0\\
						w^t{\varphi^*}^t\epsilon&1&-q(w)
					\end{pmatrix}\\
					&=\begin{pmatrix}
						I_n&-\epsilon^{-1}w&0\\ 
						0&1&0\\
						{(\epsilon^{-1}w)}^t{\varphi'}^t&1&-q(w)
					\end{pmatrix}\\
					&= (E_2^{\epsilon^{-1}w})_{\varphi'}.
				\end{split}
			\end{equation*}
			
			\noindent Since $\epsilon^{-1}w \in Q$, we get $(E_1^{\epsilon^{-1}w})_{\varphi'} , (E_2^{\epsilon^{-1}w})_{\varphi'} \in {\rm ETransO}(M,\langle,\rangle_{\varphi' \perp \psi_1})$. That is, 
			$$(\epsilon^{-1}\perp I_2) {\rm ETransO}(M,\langle,\rangle_{\varphi^*\perp \psi_1)}(\epsilon \perp I_2) \subseteq {\rm ETransO}(M,\langle,\rangle_{\varphi' \perp \psi_1}).$$  
			
			\noindent Similarly for $(E_1^{w})_{\varphi'} , (E_2^{w})_{\varphi'} \in {\rm ETransO}(M,\langle,\rangle_{\varphi' \perp \psi_1})$, we have 
			\begin{align*}
				&(E_1^{w})_{\varphi'} = (\epsilon\perp I_2)(E_1^{\epsilon w})_{\varphi^*} (\epsilon^{-1} \perp I_2 ),\\
				&(E_2^{w})_{\varphi'}=(\epsilon\perp I_2)(E_2^{\epsilon w})_{\varphi^*} (\epsilon^{-1} \perp I_2).
			\end{align*}
			Since $\epsilon w \in Q$, we can conclude that
			$$ {\rm ETransO}(M,\langle\,,\,\rangle_{\varphi' \perp \psi_1}) \subseteq (\epsilon^{-1}\perp I_2) {\rm ETransO}(M,\langle\,,\,\rangle_{\varphi^*\perp \psi_1)}(\epsilon \perp I_2). $$
		\end{proof}
		\noindent Now we extend Lemma \ref{ETrans-general1} to $Q\perp\mathbb{H}(R)^m$ as follows:
		\begin{lemma}\label{Etrans-general}
			Let $Q$ be a free quadratic $R$-space of rank $n$, and $M=Q \perp \mathbb{H}(R)^m$. Let $\langle \,,\,\rangle_{\varphi'}$ and $\langle \,,\, \rangle_{\varphi^*}$ be two quadratic forms on $Q$ where $\varphi'$ and $\varphi^*$ are the matrices corresponding to the quadratic forms with respect an ordered basis $\mathcal{B}$ of $M$. Suppose that $\varphi' = \epsilon^t {\varphi^*}\epsilon,$ for some $\epsilon \in {\rm GL}_n(R)$. Then
			$${\rm EO}_{R}(Q_{\varphi'},\mathbb{H}(R)^m) = (\epsilon^{-1}\perp I_m) {\rm EO}_{R}(Q_{\varphi^*},\mathbb{H}(R)^m)(\epsilon \perp I_m).$$
		\end{lemma}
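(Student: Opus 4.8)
The plan is to reduce the assertion to the transvection group and then to extend the single-plane conjugation computation of Lemma~\ref{ETrans-general1} across all $m$ hyperbolic summands. By Lemma~\ref{DSER=ETrans} we already have ${\rm EO}_{R}(Q_{\varphi'},\mathbb{H}(R)^m)={\rm ETransO}(M,\langle\,,\,\rangle_{\varphi'})$ and ${\rm EO}_{R}(Q_{\varphi^*},\mathbb{H}(R)^m)={\rm ETransO}(M,\langle\,,\,\rangle_{\varphi^*})$, so it suffices to prove the claimed conjugation identity for the transvection groups generated by the $E_{1i}^w$ and $E_{2i}^w$, $1\le i\le m$, $w\in Q$. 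Throughout, $\epsilon\perp I_m$ denotes the element acting as $\epsilon$ on $Q$ and as the identity on $\mathbb{H}(R)^m$, so that conjugation by it leaves each hyperbolic plane fixed and only mixes coordinates inside the $Q$-block.

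The key step is to check, for each fixed $i$, the two generator identities $(\epsilon^{-1}\perp I_m)(E_{1i}^{w})_{\varphi^*}(\epsilon\perp I_m)=(E_{1i}^{\epsilon^{-1}w})_{\varphi'}$ and $(\epsilon^{-1}\perp I_m)(E_{2i}^{w})_{\varphi^*}(\epsilon\perp I_m)=(E_{2i}^{\epsilon^{-1}w})_{\varphi'}$. In the matrix of $E_{1i}^{w}$ the only entries depending on the form are those of the block $w^{t}{\varphi^*}^{t}$ coupling $Q$ with the $i$-th hyperbolic plane, and the hypothesis $\varphi'=\epsilon^{t}\varphi^{*}\epsilon$ gives $w^{t}{\varphi^*}^{t}\epsilon=(\epsilon^{-1}w)^{t}{\varphi'}^{t}$, exactly the identity used in Lemma~\ref{ETrans-general1}. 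Since $E_{1i}^{w}$ (respectively $E_{2i}^{w}$) couples $Q$ only with the $i$-th plane and fixes the remaining $m-1$ planes, the block computation is literally the one-plane computation of Lemma~\ref{ETrans-general1} performed on $Q$ together with the $i$-th hyperbolic plane; carrying it out one $i$ at a time establishes both families of identities.

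To finish, note that $\epsilon^{-1}w$ and $\epsilon w$ again lie in $Q$, so each conjugated generator is again a generator of the corresponding transvection group. Because conjugation by the fixed invertible element $\epsilon^{-1}\perp I_m$ is a group automorphism of ${\rm GL}(M)$, it carries the full generating set of ${\rm ETransO}(M,\langle\,,\,\rangle_{\varphi^*})$ into ${\rm ETransO}(M,\langle\,,\,\rangle_{\varphi'})$ and, reading the identities backwards, the reverse inclusion holds as well; hence the two transvection groups are conjugate, and translating back through Lemma~\ref{DSER=ETrans} yields the stated equality for the DSER groups. I expect the only real obstacle to be the bookkeeping needed to confirm that the conjugation genuinely decouples across the $m$ planes, so that no new form-theoretic input beyond $\varphi'=\epsilon^{t}\varphi^{*}\epsilon$ enters; once this decoupling is made precise, the argument is a direct extension of Lemma~\ref{ETrans-general1}.
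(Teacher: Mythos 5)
Your proof is correct, but it takes a different route from the paper's. The paper never passes through the transvection group: it conjugates the \emph{general} DSER generators $(E_{\alpha})_{\varphi^*}$ and $(E_{\beta})_{\varphi^*}$, for arbitrary $\alpha\in{\rm Hom}(Q,R^m)$ and $\beta\in{\rm Hom}(Q,(R^m)^*)$, directly in block-matrix form; the only form-theoretic input is the adjoint identity
$(\alpha\epsilon)^*_{\varphi'}={\varphi'}^{-1}\epsilon^t\alpha^t=\epsilon^{-1}{\varphi^*}^{-1}\alpha^t=\epsilon^{-1}\alpha^*_{\varphi^*}$,
which gives $(\epsilon^{-1}\perp I)(E_{\alpha})_{\varphi^*}(\epsilon\perp I)=(E_{\alpha\epsilon})_{\varphi'}$ on the nose, and the reverse inclusion follows from the symmetric relations $(\epsilon\perp I)(E_{\alpha})_{\varphi'}(\epsilon^{-1}\perp I)=(E_{\alpha\epsilon^{-1}})_{\varphi^*}$. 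You instead invoke Lemma \ref{DSER=ETrans} to replace both DSER groups by the transvection groups ${\rm ETransO}$, and then conjugate only the rank-one generators $E_{1i}^w$, $E_{2i}^w$, reducing each to the single-plane computation of Lemma \ref{ETrans-general1}; this decoupling is legitimate precisely because each $E_{1i}^w$, $E_{2i}^w$ couples $Q$ only with the $i$-th hyperbolic plane and $\epsilon^{-1}\perp I$ is block diagonal and trivial on all planes. Both arguments rest on the same conjugation-of-generators principle: yours buys a simpler computation (rank one, already carried out in Lemma \ref{ETrans-general1}) at the cost of the extra input Lemma \ref{DSER=ETrans}, while the paper's is self-contained and yields the cleaner statement $E_{\alpha}\mapsto E_{\alpha\epsilon}$ for all generators at once, a formulation that also transfers verbatim to the relative case (Lemma \ref{ETrans-general-relative}). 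One small imprecision in your write-up: you assert that the only form-dependent entries of $(E_{1i}^{w})_{\varphi^*}$ lie in the block $w^t{\varphi^*}^t$, but the entry $-q(w)$ also depends on the form; the identification with $(E_{1i}^{\epsilon^{-1}w})_{\varphi'}$ additionally uses $q_{\varphi^*}(w)=q_{\varphi'}(\epsilon^{-1}w)$, which follows at once from $\varphi'=\epsilon^t\varphi^*\epsilon$ and is likewise implicit in the paper's proof of Lemma \ref{ETrans-general1}.
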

		\begin{proof} Let $\alpha:Q \rightarrow R^m$ and $\beta:Q\rightarrow R^m$ be two homomorphisms. Then for the generators $(E_{\alpha})_{\varphi^*}$ of $({\rm EO}(Q\perp \mathbb{H}(R)^m))\langle,\rangle_{\varphi^*\perp \psi_1)}$, we have
			
			\begin{equation*}
				\begin{split}
					(\epsilon^{-1}\perp I_m)(E_{\alpha})_{\varphi^*} (\epsilon \perp I_m) &=
					\begin{pmatrix}
						\epsilon^{-1}&0&0\\
						0&I_m&0\\
						0&0&I_m\\
					\end{pmatrix}\begin{pmatrix}
						I_n&0&-\alpha^*\\ 
						\alpha&I_m&-\frac{1}{2}\alpha\alpha^*\\
						0&0&I_m
					\end{pmatrix}\begin{pmatrix}
						\epsilon &0&0\\
						0&I_m&0\\
						0&0&I_m\\
					\end{pmatrix}\\
					&=\begin{pmatrix}
						I_n&0&-\epsilon^{-1}\alpha^*\\ 
						\alpha\epsilon&I_m&-\frac{1}{2}\alpha\alpha^*\\
						0&0&I_m
					\end{pmatrix}\\
					&=\begin{pmatrix}
						I_n&0&-(\alpha\epsilon)^*\\ 
						\alpha\epsilon&I_m&-\frac{1}{2}\alpha\alpha^*\\
						0&0&I_m
					\end{pmatrix} {\text{ (with respect to $\varphi'$ on $Q$ )}}\\
					&=(E_{\alpha\epsilon})_{\varphi'},
				\end{split}
			\end{equation*}
			since ${\varphi^*}^{-1}\circ (\alpha\circ\epsilon)^t={\varphi^*}^{-1}\epsilon^t\alpha^t=\epsilon^{-1}{\varphi'}^{-1}\circ \alpha^t=\epsilon^{-1}\alpha^*$. Similarly we have
			$$(\epsilon^{-1}\perp I_m)(E_{\beta})_{\varphi^*} (\epsilon \perp I_m)= (E_{\beta\epsilon})_{\varphi'}.$$
			
			\noindent  Therefore, 
			$ (\epsilon^{-1}\perp I_m) {\rm EO}_{R}(Q_{\varphi^*},\mathbb{H}(R)^m)(\epsilon \perp I_m)\subseteq{\rm EO}_{R}(Q_{\varphi'},\mathbb{H}(R)^m) .$
			
			\vspace{2mm}
			\noindent Furthermore, for the generators ${({\rm E}_{\alpha})_{\varphi'}}$ and ${({\rm E}_{\beta})_{\varphi'}}$ of ${\rm EO}_{R}(Q,\mathbb{H}(R)^m)$ we have the relations,
			$$(\epsilon\perp I_m)(E_{\alpha})_{\varphi'} (\epsilon^{-1} \perp I_m)= (E_{\alpha\epsilon^{-1}})_{\varphi^*},$$
			$$(\epsilon\perp I_m)(E_{\beta})_{\varphi'} (\epsilon^{-1} \perp I_m)= (E_{\beta\epsilon^{-1}})_{\varphi^*}.$$
			
			\noindent  Hence we get, 
			${\rm EO}_{R}(Q_{\varphi'},\mathbb{H}(R)^m) \subseteq (\epsilon^{-1}\perp I_m) {\rm EO}_{R}(Q_{\varphi^*},\mathbb{H}(R)^m)(\epsilon \perp I_m).$
		\end{proof}

		\noindent {\bf Remark:} In general, Lemma \ref{transvection-general} and Lemma \ref{Etrans-general} holds for the relative case as well. That is, for an ideal $I$ of $R$, we have the following equalities:
		$${\rm TransO}(M,IM,\langle\,,\,\rangle_{\varphi'}) = \epsilon^{-1} {\rm TransO}(M,IM,\langle\,,\,\rangle_{\varphi^*})\epsilon,$$
		$${\rm EO}_{(R,I)}(Q_{\varphi'},\mathbb{H}(R)^m) = (\epsilon^{-1}\perp I_m) {\rm EO}_{(R,I)}(Q_{\varphi^*},\mathbb{H}(R)^m)(\epsilon \perp I_m).$$
		
		\noindent This can be proved using the same techniques as above.
		
		\begin{lemma}\label{transvection-general-relative}
			Let $M$ be a free quadratic $R$-space of rank $n$ and let $I$ be an ideal of $R$. Let $\langle \,,\,\rangle_{\varphi'}$ and $\langle\, ,\, \rangle_{\varphi^*}$ be two quadratic forms on $M$ where $\varphi'$ and $\varphi^*$ are the matrices corresponding to the quadratic forms with respect a basis $\mathcal{B}$ of $M$. Suppose that $\varphi' = \epsilon^t {\varphi^*}\epsilon$, for some $\epsilon \in {\rm GL}_n(R)$. Then
			$${\rm TransO}(M,IM,\langle\,,\,\rangle_{\varphi'}) = \epsilon^{-1} {\rm TransO}(M,IM,\langle\,,\,\rangle_{\varphi^*})\epsilon.$$
		\end{lemma}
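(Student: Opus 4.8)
The plan is to reuse the conjugation computation from Lemma~\ref{transvection-general} essentially verbatim, adding only the single extra observation that conjugation by $\epsilon^{-1}$ respects the relative condition $v\in IM$. Together these show that the isomorphism $c_\epsilon\colon X\mapsto \epsilon^{-1}X\epsilon$ between the two ambient transvection groups restricts to a correspondence between the two true relative subgroups, hence between their normal closures.

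First I would invoke Lemma~\ref{transvection-general}, whose proof shows that $c_\epsilon$ sends the matrix of $\sigma_{u,v}$ taken with respect to $\varphi^*$ to the matrix of $\sigma_{\tilde u,\tilde v}$ taken with respect to $\varphi'$, where $\tilde u=\epsilon^{-1}u$ and $\tilde v=\epsilon^{-1}v$, and that $\langle \tilde u,\tilde v\rangle_{\varphi'}=\langle u,v\rangle_{\varphi^*}$, $\langle\tilde u,\tilde u\rangle_{\varphi'}=\langle u,u\rangle_{\varphi^*}$ and $\langle\tilde v,\tilde v\rangle_{\varphi'}=\langle v,v\rangle_{\varphi^*}$. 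Since $\epsilon$ is invertible it preserves unimodularity, so all the defining conditions (unimodularity of the first vector, isotropy, orthogonality of the pair) are preserved and $\sigma_{\tilde u,\tilde v}$ is again a bona fide ESD transvection for $\varphi'$. In particular $c_\epsilon$ is a group isomorphism from ${\rm TransO}(M,\langle\,,\,\rangle_{\varphi^*})$ onto ${\rm TransO}(M,\langle\,,\,\rangle_{\varphi'})$.

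Next I would record the relative bookkeeping. The true relative generators for $\varphi^*$ are the transvections $\sigma_{u,v}$ with $u\in M$ and $v\in IM$. Since $\epsilon^{-1}\in{\rm GL}_n(R)$ has entries in $R$ and $IM$ is an $R$-submodule of $M$, the equivalence $v\in IM \iff \tilde v=\epsilon^{-1}v\in IM$ holds, the reverse direction using $v=\epsilon\tilde v$. Therefore $c_\epsilon$ carries each $\varphi^*$-relative generator to a $\varphi'$-relative generator, and, running the same argument with $\epsilon$ in place of $\epsilon^{-1}$, every $\varphi'$-relative generator is the image of a $\varphi^*$-relative generator. Hence $c_\epsilon$ maps the subgroup generated by the $\varphi^*$-relative transvections isomorphically onto the subgroup generated by the $\varphi'$-relative transvections.

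Finally I would conclude with the general principle that an isomorphism carries the normal closure of a subset onto the normal closure of its image: from $c_\epsilon(g x g^{-1})=c_\epsilon(g)\,c_\epsilon(x)\,c_\epsilon(g)^{-1}$ it follows that $c_\epsilon$ sends the normal closure in ${\rm TransO}(M,\langle\,,\,\rangle_{\varphi^*})$ of the $\varphi^*$-relative subgroup onto the normal closure in ${\rm TransO}(M,\langle\,,\,\rangle_{\varphi'})$ of the $\varphi'$-relative subgroup, that is
\[
{\rm TransO}(M,IM,\langle\,,\,\rangle_{\varphi'})=\epsilon^{-1}\,{\rm TransO}(M,IM,\langle\,,\,\rangle_{\varphi^*})\,\epsilon.
\]
The matrix computation itself is inherited from Lemma~\ref{transvection-general}; the only genuinely new point — and the step where care is needed — is ensuring that $c_\epsilon$ matches up the \emph{relative} generating sets rather than merely the full groups, so that the two normal closures genuinely correspond and the relative equality, not only the absolute one, is obtained.
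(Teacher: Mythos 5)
Your proposal is correct and follows essentially the same route as the paper: the paper gives no separate argument for this lemma, merely remarking that it ``can be proved using the same techniques'' as Lemma~\ref{transvection-general}, and your write-up is exactly that conjugation computation ($\sigma_{u,v}\mapsto\sigma_{\epsilon^{-1}u,\,\epsilon^{-1}v}$ with all defining conditions preserved) supplemented by the two routine observations the paper leaves implicit, namely that $v\in IM\iff\epsilon^{-1}v\in IM$ and that a group isomorphism carries normal closures to normal closures.
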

		\begin{lemma}\label{ETrans-general-relative}
			Let $Q$ be a free quadratic $R$-space of rank $n$, and let $I$ be an ideal of $R$. Define $M=Q \perp \mathbb{H}(R)^m$. Let $\langle \,,\,\rangle_{\varphi'}$ and $\langle \,,\, \rangle_{\varphi^*}$ be two quadratic forms on $Q$ where $\varphi'$ and $\varphi^*$ are the matrices corresponding to the quadratic forms with respect a basis $\mathcal{B}$ of $M$. Suppose that $\varphi' = \epsilon^t {\varphi^*}\epsilon,$ for some $\epsilon \in {\rm GL}_n(R)$. Then
			$${\rm EO}_{(R,I)}(Q_{\varphi'},\mathbb{H}(R)^m) = (\epsilon^{-1}\perp I_m) {\rm EO}_{(R,I)}(Q_{\varphi^*},\mathbb{H}(R)^m)(\epsilon \perp I_m).$$
		\end{lemma}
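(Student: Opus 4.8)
The plan is to mirror the argument of Lemma~\ref{Etrans-general}, first establishing the equality for the \emph{true} relative groups and then transporting it to the normal closures. The only genuinely new inputs beyond the matrix computation already performed in Lemma~\ref{Etrans-general} are the observation that precomposition with $\epsilon$ respects the relative condition on the defining homomorphisms, and the elementary group-theoretic fact that a conjugation, being an automorphism of the ambient group, carries normal closures to normal closures.

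First I would recall from the proof of Lemma~\ref{Etrans-general} the two conjugation identities
$$(\epsilon^{-1}\perp I_m)(E_{\alpha})_{\varphi^*}(\epsilon\perp I_m)=(E_{\alpha\epsilon})_{\varphi'},\qquad (\epsilon^{-1}\perp I_m)(E_{\beta})_{\varphi^*}(\epsilon\perp I_m)=(E_{\beta\epsilon})_{\varphi'},$$
valid for all $\alpha,\beta\in {\rm Hom}(Q,R^m)$, where I again use the relation ${\varphi^*}^{-1}\circ(\alpha\epsilon)^t=\epsilon^{-1}\alpha^*$. The decisive point for the relative statement is that if $\alpha\in {\rm Hom}(Q,IR^m)$, that is $\alpha$ has image contained in $IP$, then the image of $\alpha\epsilon$ equals $\alpha(\epsilon(Q))=\alpha(Q)\subseteq IP$, so $\alpha\epsilon\in {\rm Hom}(Q,IR^m)$ as well, and likewise for $\beta$. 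Moreover $\alpha\mapsto\alpha\epsilon$ is a bijection of ${\rm Hom}(Q,IR^m)$ onto itself, with inverse $\alpha\mapsto\alpha\epsilon^{-1}$, because $\epsilon$ is invertible. Hence conjugation by $\epsilon^{-1}\perp I_m$ sends the true relative generators of ${\rm EO}_{I}(Q_{\varphi^*},\mathbb{H}(R)^m)$ bijectively onto those of ${\rm EO}_{I}(Q_{\varphi'},\mathbb{H}(R)^m)$, giving the true relative equality
$${\rm EO}_{I}(Q_{\varphi'},\mathbb{H}(R)^m)=(\epsilon^{-1}\perp I_m)\,{\rm EO}_{I}(Q_{\varphi^*},\mathbb{H}(R)^m)\,(\epsilon\perp I_m).$$

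To pass to the relative groups I would set $g=\epsilon^{-1}\perp I_m$ and let $c_g$ denote conjugation by $g$ in the ambient general linear group, which is a group automorphism. By Lemma~\ref{Etrans-general}, $c_g$ maps ${\rm EO}_{R}(Q_{\varphi^*},\mathbb{H}(R)^m)$ onto ${\rm EO}_{R}(Q_{\varphi'},\mathbb{H}(R)^m)$, and by the previous step it maps the subgroup ${\rm EO}_{I}(Q_{\varphi^*},\mathbb{H}(R)^m)$ onto ${\rm EO}_{I}(Q_{\varphi'},\mathbb{H}(R)^m)$. Since an isomorphism carries the normal closure of a subgroup $N$ inside an overgroup $H$ onto the normal closure of $c_g(N)$ inside $c_g(H)$, applying this to $N={\rm EO}_{I}(Q_{\varphi^*},\mathbb{H}(R)^m)$ and $H={\rm EO}_{R}(Q_{\varphi^*},\mathbb{H}(R)^m)$ yields
$${\rm EO}_{(R,I)}(Q_{\varphi'},\mathbb{H}(R)^m)=(\epsilon^{-1}\perp I_m)\,{\rm EO}_{(R,I)}(Q_{\varphi^*},\mathbb{H}(R)^m)\,(\epsilon\perp I_m),$$
which is precisely the assertion.

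The main obstacle, although modest, is the bookkeeping of the relative structure under precomposition with $\epsilon$: one must check that $\alpha\mapsto\alpha\epsilon$ preserves the class of homomorphisms landing in $IP$ and is a bijection of that class, so that true relative generators are carried to true relative generators; the surrounding matrix manipulations are verbatim those of Lemma~\ref{Etrans-general} and need no modification. The only other point requiring care is the standard observation that conjugation commutes with the formation of the normal closure, which is exactly the step that upgrades the true relative equality to the relative one.
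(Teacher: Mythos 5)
Your proposal is correct and follows essentially the same route as the paper: the paper disposes of this lemma with the remark that it ``can be proved using the same techniques as above,'' i.e.\ by the conjugation identities $(\epsilon^{-1}\perp I_m)(E_{\alpha})_{\varphi^*}(\epsilon\perp I_m)=(E_{\alpha\epsilon})_{\varphi'}$ from Lemma~\ref{Etrans-general}, which is exactly what you do. Your two added verifications --- that precomposition with $\epsilon$ is a bijection of ${\rm Hom}(Q,IR^m)$ onto itself, and that the ambient conjugation automorphism carries the normal closure of the true relative subgroup to the corresponding normal closure --- are precisely the details the paper leaves implicit, and they are correct.
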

		Now we consider a lifting of an ESD transvection to prove that every transvection is homotopic to the identity map on $M$.
		\begin{lemma}
			Let $M$ be a quadratic $R$-space and $\alpha \in {\rm TransO}(M,\langle\,,\,\rangle)$. Then there exists $\beta(X) \in {\rm TransO}(M[X],\langle\,,\,\rangle)$ such that $\beta(1)=\alpha$ and $\beta(0)=Id$.
		\end{lemma}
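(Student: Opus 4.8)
The plan is to exploit the fact that ${\rm TransO}(M,\langle\,,\,\rangle)$ is generated by the ESD transvections $\sigma_{u,v}$, and to produce an explicit polynomial lift of each generator by scaling the second vector. First I would treat a single generator $\sigma_{u,v}$, where $u$ is isotropic unimodular, $\langle u,v\rangle = 0$ and $r=q(v)$. The natural candidate for the homotopy is to replace $v$ by $Xv$, that is, to set $\beta(X):=\sigma_{u,Xv}$ over the module $M[X]=M\otimes_R R[X]$. One checks directly that $Xv$ is still orthogonal to $u$, since $\langle u,Xv\rangle = X\langle u,v\rangle = 0$, that $u$ remains isotropic and unimodular after the base change $R\hookrightarrow R[X]$, and that $q(Xv)=X^2 r$. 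Hence $\sigma_{u,Xv}$ is a genuine ESD transvection over $M[X]$, so $\beta(X)\in {\rm TransO}(M[X],\langle\,,\,\rangle)$.

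Next I would verify the two endpoint conditions by direct substitution in the defining formula
\begin{equation*}
	\sigma_{u,Xv}(x) = x + Xu\langle v,x\rangle - Xv\langle u,x\rangle - X^2 r\, u\langle u,x\rangle.
\end{equation*}
Setting $X=1$ recovers $\sigma_{u,v}=\alpha$, while setting $X=0$ collapses every term but the first and yields the identity map. This settles the claim for a single generator.

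Finally, for a general $\alpha\in {\rm TransO}(M,\langle\,,\,\rangle)$, I would write $\alpha=\sigma_{u_1,v_1}\cdots\sigma_{u_k,v_k}$ as a product of generators, and set $\beta(X):=\sigma_{u_1,Xv_1}\cdots\sigma_{u_k,Xv_k}\in {\rm TransO}(M[X],\langle\,,\,\rangle)$. Since evaluation at a fixed value of $X$ is a ring homomorphism $R[X]\to R$ and hence induces a group homomorphism on the associated transvection groups sending $\sigma_{u_i,Xv_i}$ to $\sigma_{u_i,cv_i}$, we obtain $\beta(1)=\sigma_{u_1,v_1}\cdots\sigma_{u_k,v_k}=\alpha$ and $\beta(0)=Id\cdots Id=Id$, as required.

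The construction is essentially forced, so there is no serious obstacle; the only points requiring care are to confirm that all three defining conditions of an ESD transvection, namely isotropy of $u$, orthogonality $\langle u,Xv\rangle=0$, and unimodularity of $u$, persist over the polynomial ring $M[X]$, and to note that the decomposition of $\alpha$ into generators is not canonical but that any such choice yields a valid lift $\beta(X)$.
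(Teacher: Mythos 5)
Your proposal is correct and follows essentially the same route as the paper: both write $\alpha$ as a product of ESD generators $\sigma_{u_i,v_i}$ and lift each one to $\sigma_{u_i,Xv_i}$ over $M[X]$, so that evaluation at $X=1$ and $X=0$ gives $\alpha$ and $Id$ respectively. Your additional checks (that $\langle u, Xv\rangle=0$, $q(Xv)=X^2r$, and unimodularity of $u$ persist after base change to $R[X]$) are points the paper leaves implicit, and they are verified correctly.
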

		\begin{proof}
			Given $\alpha \in {\rm TransO}(M,\langle,\rangle)$. Therefore $\alpha $ is product of transformations of the form
			
			$$ \alpha =\prod \sigma_{u_i,v_i}, $$
			
			\noindent where $\sigma_{u_i,v_i}(x)=x+\langle u_i,x \rangle v_i-\langle v_i,x \rangle u_i - q(v_i) \langle u_i,x \rangle u_i$, for $u_i,v_i \in Q$ with $\langle u_i,v_i \rangle = 0 = q(u_i) $ and $u_i$ is unimodular. Now let
			\begin{equation*}
				\sigma_iX(x) := 
				x+\langle u_i,x \rangle v_iX-\langle v_iX,x \rangle u_i - q(v_i)X^2 \langle u_i,x \rangle u_i, 
			\end{equation*}
			for $v_iX \in P[X]$.
			One can define $\beta(X)$ as $\beta(X):=\prod \sigma_{u_i,v_iX}$, which satisfies the conditions $\beta(1)=\prod \sigma_{u_i,v_i} = \alpha$ and $\beta(0)=\prod Id = Id$.
		\end{proof} 
		
		\noindent The relative analogue of this result can be expressed as follows:
		
		\begin{lemma}\label{lifttoM[X]}
			Let $M$ be a quadratic $R$-space and $\alpha \in {\rm TransO}(M,IM,\langle\,,\,\rangle))$. Then there exists $\beta(X) \in {\rm TransO}(M[X],IM[X],\langle\,,\,\rangle)$ such that $\beta(1)=\alpha$ and $\beta(0)=Id$.
		\end{lemma}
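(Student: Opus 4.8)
The plan is to mimic the proof of the absolute version established immediately above, adding one ingredient to accommodate the fact that the relative group ${\rm TransO}(M,IM,\langle\,,\,\rangle)$ is defined as a \emph{normal closure} rather than as the group generated by relative transvections directly. First I would record the conjugation behaviour of ESD transvections: for any orthogonal $\gamma \in {\rm TransO}(M,\langle\,,\,\rangle)$ a direct computation using $\langle \gamma x, \gamma y\rangle = \langle x,y\rangle$ shows that
\[
\gamma\,\sigma_{u,v}\,\gamma^{-1} = \sigma_{\gamma u,\,\gamma v},
\]
and that $q(\gamma u)=q(u)=0$, $q(\gamma v)=q(v)$, $\langle \gamma u,\gamma v\rangle=\langle u,v\rangle=0$, with $\gamma u$ unimodular whenever $u$ is. Since $\gamma$ is an $R$-automorphism and $IM$ is an $R$-submodule, $\gamma(IM)=IM$; hence $v\in IM$ forces $\gamma v\in IM$, so the conjugate is again a relative ESD transvection. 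Because inverses also stay in range ($\sigma_{u,v}^{-1}=\sigma_{u,-v}$ with $-v\in IM$), this identity lets me absorb every conjugating factor and write an arbitrary element of the relative group as a bare product
\[
\alpha=\prod_i \sigma_{u_i,v_i},\qquad u_i\in M\text{ unimodular isotropic},\ v_i\in IM,\ \langle u_i,v_i\rangle=0.
\]

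With $\alpha$ in this normal form I would apply the homotopy trick of the preceding lemma, introducing the parameter $X$ by scaling each $v_i$ and setting
\[
\beta(X):=\prod_i \sigma_{u_i,\,v_iX}.
\]
Each factor is a genuine ESD transvection over $M[X]$: the vector $u_i$ remains unimodular and isotropic over $R[X]$, and $\langle u_i, v_iX\rangle = X\langle u_i,v_i\rangle = 0$. Moreover $v_i\in IM$ yields $v_iX\in IM[X]$, so every $\sigma_{u_i,v_iX}$ is in fact a relative ESD transvection for the pair $(M[X],IM[X])$, whence $\beta(X)\in {\rm TransO}(M[X],IM[X],\langle\,,\,\rangle)$. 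Specializing the parameter gives $\beta(1)=\prod_i \sigma_{u_i,v_i}=\alpha$ and, using $\sigma_{u,0}=Id$, also $\beta(0)=\prod_i \sigma_{u_i,0}=\prod_i Id = Id$, which is exactly what is required.

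The step I expect to be the main obstacle is precisely the reduction from the normal closure to a plain product of relative transvections, carried out in the first paragraph. Everything there hinges on the conjugation formula holding verbatim, and this in turn relies on $\gamma$ being orthogonal so that the bilinear pairings transform as $\langle \cdot,\gamma^{-1}x\rangle=\langle\gamma\,\cdot,x\rangle$; one must also verify the stability of the relative condition $v\in IM$ under both conjugation and inversion. Once these bookkeeping facts are in place the scaling argument is formally identical to the absolute case, and no further computation is needed.
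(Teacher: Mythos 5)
Your proof is correct, and its core is the same homotopy argument the paper uses: write $\alpha$ as a product of relative ESD transvections $\sigma_{u_i,v_i}$ with $v_i$ in the ideal-scaled module, replace each $v_i$ by $v_iX$, and observe $\beta(1)=\alpha$, $\beta(0)=Id$. Where you genuinely add something is the first paragraph. The paper's own proof is a single line ("take $u_i\in Q$ and $v_i\in IQ$ in the previous lemma"), which silently assumes that every element of ${\rm TransO}(M,IM,\langle\,,\,\rangle)$ is a plain product of relative transvections — even though that group is defined as a \emph{normal closure} inside ${\rm TransO}(M,\langle\,,\,\rangle)$. You close this gap with the conjugation identity $\gamma\,\sigma_{u,v}\,\gamma^{-1}=\sigma_{\gamma u,\gamma v}$ for orthogonal $\gamma$, together with $\gamma(IM)=IM$ and $\sigma_{u,v}^{-1}=\sigma_{u,-v}$; this shows the group generated by relative ESD transvections is already normal in ${\rm TransO}(M,\langle\,,\,\rangle)$, so the normal closure adds nothing and the paper's implicit normal form is legitimate. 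All three supporting facts check out (the conjugation formula follows from $\langle\gamma v,\gamma^{-1}x\rangle$-type manipulations and $q(\gamma v)=q(v)$; the additivity $\sigma_{u,v}\sigma_{u,w}=\sigma_{u,v+w}$ for $v,w\perp u$ gives the inverse formula). So your write-up is not a different route but a more complete version of the paper's: it proves the lemma as stated for the normal closure, whereas the paper's one-liner only literally covers products of relative transvections.
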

		
		\begin{proof}
			Take $u_i \in Q$ and $v_i \in IQ$ in the previous lemma. 
		\end{proof}
		
		 In \cite{AmbilyRao2020}, A. A. Ambily and R. A. Rao established Quillen's analogue of the local-global principle for the DSER elementary orthogonal group. In \cite{GayathryAmbily}, we prove the relative version of this result as follows:
		
		\begin{lemma}[Relative local-global principle]\label{RLG}
			Let $Q$ be a quadratic $R$-space of rank $n\geq 1$ and $P$ be a finitely generated projective module of rank $m\geq 2$, and $M=Q\perp \mathbb{H}(P)$. Let $\alpha(X) \in {\rm O}_{R[X]}(M[X])$ be such that $\alpha(0)=Id$. If $\alpha_{\mathfrak{m}}(X) \in {\rm EO}_{(R_\mathfrak{m}[X],I_{\mathfrak{m}}[X])}(Q_{\mathfrak{m}}[X],\mathbb{H}(P_{\mathfrak{m}}[X]))$, for all maximal ideal $\mathfrak{m}$ of $R$, then $\alpha(X) \in {\rm EO}_{(R[X],I[X])}(Q[X],\mathbb{H}(P[X]))$.
			
		\end{lemma}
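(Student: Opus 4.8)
The plan is to transplant Quillen's patching argument to the relative DSER setting, using the absolute local--global principle of Ambily--Rao \cite{AmbilyRao2020} as a template and the homotopy lifting of Lemma~\ref{lifttoM[X]} as the source of polynomial families. The whole argument rests on a \emph{relative dilation lemma}, after which a finite cover of $\operatorname{Spec} R$ together with a telescoping identity assembles the local data into a global conclusion. Since the hypothesis is imposed at every maximal ideal $\mathfrak m$, where $P_{\mathfrak m}$ is free, the free-case comparisons proved earlier (Lemma~\ref{Etrans-general}, etc.) let us replace the DSER generators by explicit matrices whenever convenient.

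First I would establish the relative dilation lemma: if $s\in R$ is such that $\alpha_s(X)\in {\rm EO}_{(R_s[X],I_s[X])}(Q_s[X],\mathbb{H}(P_s[X]))$ and $\alpha(0)=Id$, then there is an integer $N$ with $\alpha(bX)\in {\rm EO}_{(R[X],I[X])}(Q[X],\mathbb{H}(P[X]))$ for every $b\in s^N R$. The mechanism is to write $\alpha_s(X)$ as a product of relative generators $E_{\alpha}$, $E^{*}_{\beta}$ (with $\alpha\colon Q_s\to I_s P_s$ and $\beta\colon Q_s\to I_s P_s^{*}$) together with conjugators from the absolute group, and then to \emph{clear denominators}: the normalization $\alpha(0)=Id$ forces every generator appearing to be trivial at $X=0$, so the substitution $X\mapsto bX$ with $b$ a high enough power of $s$ pushes all the $s$-denominators back into $R$. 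The delicate point is that the relative structure must survive this substitution, i.e.\ the defining maps of the dilated generators still land in $IP$ and $IP^{*}$, and the conjugators still come from ${\rm EO}_R$; this is where the bookkeeping of the $X$-degree and the choice of $N$ get fixed.

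With the dilation lemma in hand I would run the patching step. For each maximal ideal $\mathfrak m$ the hypothesis gives $\alpha_{\mathfrak m}(X)$ relatively elementary; since only finitely many generators and relations occur, this already holds over $R_{s}[X]$ for some $s\notin\mathfrak m$. The basic open sets $D(s)$ obtained this way cover $\operatorname{Spec} R$, so we may pick $s_1,\dots,s_k$ with corresponding exponents $N_1,\dots,N_k$ and a relation $1=\sum_i c_i s_i^{N_i}$. Setting $T_i=\sum_{j\le i} c_j s_j^{N_j}$, so that $T_0=0$ and $T_k=1$, the telescoping identity
$$\alpha(X)=\prod_{i=1}^{k}\alpha(T_iX)\,\alpha(T_{i-1}X)^{-1}$$
expresses $\alpha(X)=\alpha(1\cdot X)$ as a product in which each factor, after the change of variable $X\mapsto T_{i-1}X+c_is_i^{N_i}X$, is handled by the dilation lemma at $s_i$ and hence lies in ${\rm EO}_{(R[X],I[X])}(Q[X],\mathbb{H}(P[X]))$. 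This yields the desired membership.

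The main obstacle is the relative dilation lemma, and within it the \emph{normal-closure} aspect: because ${\rm EO}_{(R,I)}$ is by definition the normal closure of the true relative group inside ${\rm EO}_R$, one cannot merely track the relative generators but must also control the absolute elements conjugating them, ensuring that after clearing denominators these conjugators remain defined over $R[X]$ and not only over $R_s[X]$. Keeping the relative ($I$-valued) constraint and the absolute conjugators simultaneously integral under the substitution $X\mapsto bX$ is the crux; the hypothesis $\alpha(0)=Id$ together with a careful degree count in $X$ are exactly what make this possible.
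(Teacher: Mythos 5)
A preliminary point: the paper itself does not prove Lemma~\ref{RLG}. It is imported from the preprint \cite{GayathryAmbily} cited immediately before the statement, so there is no in-paper argument to compare yours against. Your skeleton --- a relative dilation lemma followed by Quillen patching via a telescoping product --- is the standard route, parallel to the absolute case in \cite{AmbilyRao2020}, and is surely what that reference does; so the architecture is the expected one.

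That said, two steps as you describe them would fail. First, inside the dilation lemma you assert that ``the normalization $\alpha(0)=Id$ forces every generator appearing to be trivial at $X=0$.'' This is false: only the \emph{product} is trivial at $X=0$, not the individual factors of an arbitrary factorization of $\alpha_s(X)$. The real content of Quillen-type dilation lemmas is exactly to cope with this, by splitting the generators (Lemma~\ref{splitting}) and using the commutator calculus of the $E_{\alpha}$, $E^{*}_{\beta}$ to rewrite $\alpha(bX)$ as a product of generators whose parameters carry high enough powers of $s$ to be defined over $R[X]$; moreover, in the relative case one needs a generation statement taming the normal closure (e.g.\ that ${\rm EO}_{(R,I)}$ is generated by conjugates of relative generators by absolute elementary elements of controlled form), so that the conjugators can be cleared of denominators as well --- you correctly flag this as the crux but supply no mechanism for it. Second, the patching step as written is not an instance of your dilation lemma: $T_{i-1}$ is not divisible by any power of $s_i$, so the substitution $X\mapsto T_{i-1}X+c_is_i^{N_i}X$ is not a dilation. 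The standard repair is to introduce an auxiliary variable $Z$ over the base ring $R[X]$, set $\delta_i(Z)=\alpha((T_{i-1}+Z)X)\,\alpha(T_{i-1}X)^{-1}$, note that $\delta_i(0)=Id$ and that $\delta_i$ localizes at $s_i$ into the relative elementary group, apply the dilation lemma in the variable $Z$, and only then evaluate $Z=c_is_i^{N_i}$. (Also, the telescoping product must be ordered with $i$ decreasing; as written, with $i$ increasing, adjacent factors do not cancel in a noncommutative group.) With these repairs your outline becomes the expected proof, but as stated the two central steps do not go through.
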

		
		Now we can prove the equality of the ESD transvection group and the DSER elementary orthogonal group as follows: 
		
		\begin{theorem}\label{equality-general}
			Let $R$ be a commutative ring in which 2 is invertible and $I$ be an ideal of $R$. Let $(Q,q)$ be a non-degenerate quadratic module and $P$ be a finitely generated projective module with hyperbolic rank $m\geq 2$. Let $M=Q \perp \mathbb{H}(P)$ with form $\langle\,,\,\rangle=q\perp \langle\,,\,\rangle_h $. Assume that for every maximal ideal $ \mathfrak{m}$ over $R$, the form $\langle\,,\, \rangle_\mathfrak{m}$ corresponds to the standard form $\tilde{\phi}_{n+2m}$. Then $${\rm TransO}(M,IM, \langle\, ,\, \rangle )={\rm EO}_{(R,I)}(Q,\mathbb{H}(P)).$$
			
		\end{theorem}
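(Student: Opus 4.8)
The plan is to prove the two inclusions separately, the forward one being immediate and the reverse one being the whole content. The inclusion ${\rm EO}_{(R,I)}(Q,\mathbb{H}(P)) \subseteq {\rm TransO}(M,IM,\langle\,,\,\rangle)$ is already recorded in the Remark following the definition of the relative elementary orthogonal transvections: every relative DSER generator $E_\alpha$, $E_\beta^*$ (with $\alpha,\beta$ landing in $IP$, $IP^*$) is a relative ESD transvection, and since the full DSER group sits inside the full ESD transvection group, the normal closure defining ${\rm EO}_{(R,I)}(Q,\mathbb{H}(P))$ is carried into the normal closure defining ${\rm TransO}(M,IM,\langle\,,\,\rangle)$. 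So I would focus entirely on the reverse inclusion ${\rm TransO}(M,IM,\langle\,,\,\rangle) \subseteq {\rm EO}_{(R,I)}(Q,\mathbb{H}(P))$.

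For the reverse inclusion I would fix $\sigma \in {\rm TransO}(M,IM,\langle\,,\,\rangle)$ and homotope it to the identity. By Lemma \ref{lifttoM[X]} there is $\beta(X) \in {\rm TransO}(M[X],IM[X],\langle\,,\,\rangle)$ with $\beta(1)=\sigma$ and $\beta(0)={\rm Id}$; in particular $\beta(X) \in {\rm O}_{R[X]}(M[X])$ with $\beta(0)={\rm Id}$, which is precisely the hypothesis demanded by the relative local--global principle, Lemma \ref{RLG} (applicable since the hyperbolic rank $m\geq 2$). It therefore suffices to verify, for every maximal ideal $\mathfrak{m}$ of $R$, that the localization $\beta_{\mathfrak{m}}(X)$ lies in ${\rm EO}_{(R_\mathfrak{m}[X],I_\mathfrak{m}[X])}(Q_\mathfrak{m}[X],\mathbb{H}(P_\mathfrak{m}[X]))$.

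The local computation is where the free-case results enter. Over the local ring $R_\mathfrak{m}$ the finitely generated projective module $P_\mathfrak{m}$ is free of rank $m$, so $M_\mathfrak{m}=Q_\mathfrak{m}\perp\mathbb{H}(R_\mathfrak{m}^m)$ and $M_\mathfrak{m}[X]=Q_\mathfrak{m}[X]\perp\mathbb{H}(R_\mathfrak{m}[X]^m)$ are free; moreover, by hypothesis the localized form $\langle\,,\,\rangle_\mathfrak{m}$ corresponds to the standard matrix $\tilde\phi_{n+2m}$, and this persists over $R_\mathfrak{m}[X]$. Consequently the free-case equality applies: combining the relative identity of Theorem \ref{equalityfreecase}, namely ${\rm TransO}(M_\mathfrak{m}[X],I_\mathfrak{m} M_\mathfrak{m}[X])={\rm ETransO}(M_\mathfrak{m}[X],I_\mathfrak{m} M_\mathfrak{m}[X])$, with the relative DSER--transvection identification of Lemma \ref{DSER=ETrans}, I obtain
$${\rm TransO}(M_\mathfrak{m}[X],I_\mathfrak{m} M_\mathfrak{m}[X]) = {\rm EO}_{(R_\mathfrak{m}[X],I_\mathfrak{m}[X])}(Q_\mathfrak{m}[X],\mathbb{H}(R_\mathfrak{m}[X]^m)).$$
Since $\beta_\mathfrak{m}(X)$ lies in the left-hand group, it lies in the right-hand group, which is the required local membership. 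Lemma \ref{RLG} then yields $\beta(X) \in {\rm EO}_{(R[X],I[X])}(Q[X],\mathbb{H}(P[X]))$, and specializing at $X=1$ gives $\sigma=\beta(1)\in{\rm EO}_{(R,I)}(Q,\mathbb{H}(P))$, completing the proof.

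The hard part, and the point needing care, is the local step: ensuring the free-case equality transfers to the \emph{non-local} polynomial ring $R_\mathfrak{m}[X]$ rather than merely to $R_\mathfrak{m}$. The freeness of $P_\mathfrak{m}[X]$ over $R_\mathfrak{m}[X]$ is exactly what makes Theorem \ref{equalityfreecase} applicable there, and one must confirm that standardness of the form survives the extension $R_\mathfrak{m}\to R_\mathfrak{m}[X]$ and that the normal-closure (relative) versions of Lemma \ref{DSER=ETrans} and Theorem \ref{equalityfreecase} hold verbatim over $R_\mathfrak{m}[X]$. If the localized form is only \emph{equivalent} to $\tilde\phi_{n+2m}$ via some $\epsilon_\mathfrak{m}\in{\rm GL}_{n+2m}(R_\mathfrak{m})$ rather than literally equal, I would first conjugate by $\epsilon_\mathfrak{m}$ using the change-of-form Lemmas \ref{transvection-general-relative} and \ref{ETrans-general-relative}, which carry ${\rm TransO}$ and ${\rm EO}_{(R,I)}$ simultaneously and hence preserve the desired equality; this reduces the local computation to the genuinely standard form, where Theorem \ref{equalityfreecase} applies directly.
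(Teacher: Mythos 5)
Your proposal is correct and follows essentially the same route as the paper: both lift $\sigma$ to a homotopy $\beta(X)$ via Lemma \ref{lifttoM[X]}, apply the free-case equality of Theorem \ref{equalityfreecase} (together with Lemma \ref{DSER=ETrans}) at each localization $R_\mathfrak{m}$, invoke the relative local-global principle of Lemma \ref{RLG}, and specialize at $X=1$. Your added care about working over $R_\mathfrak{m}[X]$ rather than $R_\mathfrak{m}$, and the fallback conjugation via Lemmas \ref{transvection-general-relative} and \ref{ETrans-general-relative}, is a slightly more scrupulous rendering of the same argument.
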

		\begin{proof}
			It is easy to see that ${\rm EO}_{(R,I)}(Q,\mathbb{H}(P)) \subseteq {\rm TransO}(M,IM, \langle \,,\, \rangle )$.
			To verify the reverse inclusion, consider $\alpha \in {\rm TransO}(M,IM, \langle\, ,\, \rangle)$. Then by Lemma \ref{lifttoM[X]}, there exists $\beta(X) \in {\rm TransO}(M[X],IM[X], \langle \,,\, \rangle)$ such that $\beta(1)=\alpha $ and $\beta(0)= Id$. By Theorem \ref{equalityfreecase}, we get $\beta(X)_{\mathfrak{m}} \in {\rm TransO}(M_{\mathfrak{m}}[X],IM_{\mathfrak{m}}[X], \langle \, ,\, \rangle_{\varphi}) ={\rm EO}_{n+2m}(R_{\mathfrak{m}},I_{\mathfrak{m}})= {\rm EO}_{(R_{\mathfrak{m}},I_{\mathfrak{m}})}(Q_{\mathfrak{m}}[X],\mathbb{H}(P_{\mathfrak{m}}[X])$,  for all maximal ideal $\mathfrak{m}$ of $R$. Then, by Lemma \ref{RLG}, we obtain $\beta(X) \in {\rm EO}_{(R,I)}(Q[X],\mathbb{H}(P[X])) $. Substituting for $X=1$, we get $\alpha = \beta(1) \in {\rm EO}_{(R,I)}(Q,\mathbb{H}(P))$, which implies
			$${\rm TransO}(M,IM, \langle\, ,\, \rangle )\subseteq {\rm EO}_{(R,I)}(Q,\mathbb{H}(P)).$$
		\end{proof}

		\begin{theorem}
			Let $R$ be a commutative ring in which 2 is invertible and $I$ be an ideal of $R$. Let $(Q,q)$ be a non-degenerate quadratic module and let $M=Q \perp \mathbb{H}(P)$ with form $\langle\,,
			\,\rangle=q\perp \langle\,,
			\,\rangle_h$ for a finitely generated projective module of $P$ with hyperbolic rank $m\geq 2$ . Assume that for every maximal ideal $ \mathfrak{m}$ over $R$, the form $\langle\,,\, \rangle_\mathfrak{m} $ corresponds to the standard form $\tilde{\phi}$ such that $\varphi' = \epsilon^t \tilde{\phi}\epsilon,$ for some $\epsilon \in {\rm GL}_{n+2m}(R)$. Then $${\rm TransO}(M,IM, \langle\, ,\, \rangle_{\varphi'} )= {\rm EO}_{(R,I)}(Q,\mathbb{H}(P))_{\varphi'}.$$
			
		\end{theorem}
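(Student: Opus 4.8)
The plan is to deduce the general congruent form $\varphi'$ from the standard case already settled in Theorem~\ref{equality-general} by conjugating with the matrix $\epsilon$. The key observation is that the hypothesis $\varphi' = \epsilon^t\tilde{\phi}\,\epsilon$ says precisely that $\epsilon$ is an isometry $(M,\langle\,,\,\rangle_{\varphi'}) \to (M,\langle\,,\,\rangle_{\tilde{\phi}})$: for all $x,y\in M$,
\[
\langle x,y\rangle_{\varphi'} = x^t\varphi' y = (\epsilon x)^t\tilde{\phi}\,(\epsilon y) = \langle \epsilon x, \epsilon y\rangle_{\tilde{\phi}}.
\]
Consequently conjugation by $\epsilon$ carries the orthogonal group of $\langle\,,\,\rangle_{\tilde{\phi}}$ isomorphically onto that of $\langle\,,\,\rangle_{\varphi'}$, and the whole argument reduces to checking that this conjugation respects both the transvection group and the DSER group, so that the equality known for $\tilde{\phi}$ transports to $\varphi'$.

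First I would invoke the standard case. Since $\tilde{\phi}$ is the standard symmetric form, hence locally standard at every maximal ideal, Theorem~\ref{equality-general} applies and gives ${\rm TransO}(M,IM,\langle\,,\,\rangle_{\tilde{\phi}}) = {\rm EO}_{(R,I)}(Q,\mathbb{H}(P))_{\tilde{\phi}}$. On the transvection side, Lemma~\ref{transvection-general-relative}, applied with $\varphi^*=\tilde{\phi}$ and the full matrix $\epsilon\in{\rm GL}_{n+2m}(R)$, yields
\[
{\rm TransO}(M,IM,\langle\,,\,\rangle_{\varphi'}) = \epsilon^{-1}\,{\rm TransO}(M,IM,\langle\,,\,\rangle_{\tilde{\phi}})\,\epsilon .
\]
Substituting the standard-case equality into this display reduces the theorem to the single identity
\[
\epsilon^{-1}\,{\rm EO}_{(R,I)}(Q,\mathbb{H}(P))_{\tilde{\phi}}\,\epsilon = {\rm EO}_{(R,I)}(Q,\mathbb{H}(P))_{\varphi'} .
\]

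The identity in the last display is the heart of the matter and the step I expect to be the main obstacle. The relative DSER conjugation already available, Lemma~\ref{ETrans-general-relative}, only conjugates by a matrix of the block form $\epsilon_0\perp I_{2m}$ acting on the $Q$-summand alone, whereas the $\epsilon$ here is an arbitrary element of ${\rm GL}_{n+2m}(R)$ that may mix the $Q$- and $\mathbb{H}(P)$-coordinates. The way I would close this gap is by functoriality of the DSER construction under isometries: because $\epsilon$ is an isometry, it transports the standard hyperbolic decomposition underlying $\tilde{\phi}$ to a decomposition realizing $\varphi'$, and a generator-level computation of the same flavour as the one carried out in Lemma~\ref{Etrans-general} shows that each $\epsilon^{-1}E_\alpha\epsilon$ and $\epsilon^{-1}E^*_\beta\epsilon$ is again a relative DSER elementary transformation for $\varphi'$, with the associated homomorphism transformed by $\epsilon$. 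One must also verify that the ideal $I$ is preserved, that is, that relative generators map to relative generators; this follows since $\epsilon$ has entries in $R$ and conjugation is $R$-linear, after which the normal closures are matched automatically. Granting this, the subscript $\varphi'$ on the right-hand side is exactly the $\epsilon$-transform of the standard DSER group, and the identity holds.

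Chaining the displays then finishes the proof:
\[
{\rm TransO}(M,IM,\langle\,,\,\rangle_{\varphi'}) = \epsilon^{-1}\,{\rm EO}_{(R,I)}(Q,\mathbb{H}(P))_{\tilde{\phi}}\,\epsilon = {\rm EO}_{(R,I)}(Q,\mathbb{H}(P))_{\varphi'} .
\]
The two ingredients requiring no real work are the transvection conjugation and the appeal to Theorem~\ref{equality-general}; all the care goes into the generator computation showing that conjugation by the full congruence $\epsilon$ preserves the DSER elementary transformations together with their relative structure.
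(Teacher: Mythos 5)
Your proposal takes the same route as the paper's own proof, which is exactly the chain you wrote: by Lemma \ref{transvection-general-relative}, ${\rm TransO}(M,IM,\langle\,,\,\rangle_{\varphi'})$ is the $\epsilon$-conjugate of ${\rm TransO}(M,IM,\langle\,,\,\rangle_{\tilde\phi})$, which equals ${\rm EO}_{(R,I)}(Q,\mathbb{H}(P))_{\tilde\phi}$ by Theorem \ref{equality-general}, and the conjugate of that is identified with ${\rm EO}_{(R,I)}(Q,\mathbb{H}(P))_{\varphi'}$ by Lemma \ref{ETrans-general-relative}. The one point of divergence is the step you single out as the main obstacle. The paper does not confront it: in its proof the conjugating matrix is written $(\epsilon^{-1}\perp I_m)$, i.e.\ the congruence is implicitly taken to respect the splitting $M=Q\perp\mathbb{H}(P)$, acting on the $Q$-summand alone and trivially on the hyperbolic part, which is precisely the situation Lemma \ref{ETrans-general-relative} covers. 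Under that reading nothing remains to be proved and your argument coincides with the paper's verbatim.

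Under your literal reading, where $\epsilon\in{\rm GL}_{n+2m}(R)$ may genuinely mix the $Q$- and $\mathbb{H}(P)$-coordinates, your proposed fill does not go through as stated. Conjugation by the isometry $\epsilon$ carries a DSER generator $E_\alpha$ (defined relative to the splitting $Q\perp\mathbb{H}(P)$) to an elementary transformation adapted to the \emph{transported} splitting $\epsilon^{-1}Q\perp\epsilon^{-1}\mathbb{H}(P)$; showing that this lies in the DSER group attached to the \emph{original} splitting with the form $\varphi'$ is not a computation of the same flavour as Lemma \ref{Etrans-general} (there the block structure of the generators is preserved because $\epsilon$ acts only on $Q$). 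What is needed is independence of the DSER elementary group from the chosen hyperbolic splitting, equivalently a normality-type statement as in \cite{AmbilyRao2020} — a genuine theorem, not a routine generator check. Indeed, for a truly mixing $\epsilon$ the right-hand side ${\rm EO}_{(R,I)}(Q,\mathbb{H}(P))_{\varphi'}$ barely parses, since the DSER construction requires the form to restrict to $Q$ and to be hyperbolic on $\mathbb{H}(P)$. So either adopt the block-diagonal reading of $\epsilon$ (and then your proof is complete and identical to the paper's), or supply the decomposition-independence argument explicitly; "functoriality under isometries" alone does not deliver it.
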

		\begin{proof}
			By using Lemma \ref{transvection-general-relative}, Lemma \ref{ETrans-general-relative} and Lemma \ref{equality-general}, we get
			\begin{equation*}
				\begin{split}
					{\rm TransO}(M,IM, \langle\, ,\, \rangle_{\varphi'} )&= (\epsilon^{-1}\perp I_m) {\rm TransO}(M, IM \langle\,,\,\rangle_{\tilde{\phi}})(\epsilon\perp I_m)\\
					&=(\epsilon^{-1}\perp I_m) {\rm EO}_{(R,I)}(Q,\mathbb{H}(P))_{\tilde{\phi}}(\epsilon\perp I_m)\\
					&={\rm EO}_{(R,I)}(Q,\mathbb{H}(P))_{\varphi'}.
				\end{split}	\end{equation*}\end{proof}
		
		\noindent \textbf{Acknowledgment:}
		The first author would like to thank KSCSTE Young Scientist Award Scheme (2021-KSYSA-RG) for providing grant to support this work. She would also like to thank SERB for SURE grant (SUR/2022/004894) and RUSA (RUSA 2.0-T3A), Govt. of India. The second author would like to thank NBHM and KSCSTE for the postdoctoral fellowship at KSOM and the third author would like to acknowledge the support of the CSIR Fellowship (09/0239(13173)/2022-EMR-I). The authors would like to thank Prof. B. Sury for his valuable suggestions.
		\\ 
		\vspace{2mm}

		\bibliographystyle{amsplain}

	\end{document}